 \newcommand{\ceil}[1]{\lceil #1 \rceil}
 \newcommand{\sign}{\text{sign}}
\DeclareMathOperator{\sgn}{sgn}
\providecommand{\abs}[1]{\lvert#1\rvert}
\providecommand{\norm}[1]{\lVert#1\rVert}
\newcommand{\set}[1]{\left\{#1\right\}}
\newcommand{\mat}[1]{{\mathbf #1}}
\newcommand{\A}{\mat{A}}
\newcommand{\B}{\mat{B}}
\newenvironment{proof*}{\noindent{\bf Proof:}}{}
\newcommand{\ignore}[1]{}
\newcommand{\vzero}{\mathbf{0}}
\newcommand{\M}{\mat{M}}
\newcommand{\vI}{\mat{I}}
\newcommand{\re}[2]{\texttt{RE}(#1 \parallel #2)}
\newcommand{\dotp}[2]{\left \langle #1 , #2 \right \rangle}
\newcommand{\E}{\mathbb{E}}
\newcommand{\eps}{\varepsilon}
\newcommand{\vx}{\vec{x}}
\newcommand{\vy}{\vec{y}}
\newcommand{\ve}{\vec{e}}
\newcommand{\vz}{\vec{z}}
\newcommand{\vu}{\vec{u}}
\newcommand{\vp}{\vec{p}}
\newcommand{\vlams}{\bm{\lambda}^*}
\newcommand{\vlam}{\bm{\lambda}}
\newcommand{\vb}{\mat{b}}
\newcommand{\enc}[1]{\left( #1 \right)}
\newcommand{\enco}[1]{\left [ #1 \right ]}
\newcommand{\enct}[1]{\left \{ #1 \right \}}
\newcommand{\argmax}{\operatornamewithlimits{argmax}}
\newcommand{\argmin}{\operatornamewithlimits{argmin}}
\newcommand{\R}{\mathbb{R}}
\newcommand{\poly}{\mathrm{poly}}
\newcommand{\eqdef}{\stackrel{\vartriangle}{=}}
\renewcommand{\vec}[1]{\mathbf{#1}}
\newcommand{\qed}{\hfill \small$\blacksquare$\normalsize}
\newcommand{\longv}[1]{#1}
\newcommand{\old}[1]{#1}
\begin{document}

\title{The Rate of Convergence of AdaBoost}

\author{\name Indraneel Mukherjee \email imukherj@cs.princeton.edu\\
  \addr   Princeton University \\
  Department of Computer Science\\
  Princeton, NJ 08540 USA\\
  \AND
  \name Cynthia Rudin \email rudin@mit.edu \\
  \addr  Massachusetts Institute of Technology\\
  MIT Sloan School of Management\\
  Cambridge, MA 02139 USA\\
  \AND
  \name Robert E. Schapire \email schapire@cs.princeton.edu\\
  \addr   Princeton University \\
  Department of Computer Science\\
  Princeton, NJ 08540 USA}

\editor{}

\maketitle

\begin{abstract}%
  \old{
 The AdaBoost algorithm was designed to combine
  many ``weak'' hypotheses that perform slightly better than random
  guessing into a ``strong'' hypothesis that has very low error.  We
  study the rate at which AdaBoost iteratively converges to the
  minimum of the ``exponential loss.''
  Unlike previous work,
  our proofs do not require a weak-learning assumption,
  nor do they require that minimizers of the exponential loss are
  finite.  Our first result shows that at iteration $t$,
  the exponential loss of AdaBoost's computed parameter vector will be
  at most $\varepsilon$ more than that of any parameter vector of
  $\ell_1$-norm bounded by $B$ in a number of rounds that is at most
  a polynomial in $B$ and $1/\varepsilon$. We also provide
  lower bounds showing that a polynomial dependence on these
  parameters is necessary.  Our second result is that within
  $C/\varepsilon$ iterations, AdaBoost achieves a value of the
  exponential loss that is at most $\varepsilon$ more than the best
  possible value, where $C$ depends on the dataset. We show that this
  dependence of the rate on $\varepsilon$ is optimal up to constant
  factors, i.e., at least $\Omega(1/\varepsilon)$ rounds are necessary
  to achieve within $\varepsilon$ of the optimal exponential loss.
}
\end{abstract}

\begin{keywords}
  AdaBoost, optimization, coordinate descent, convergence rate.
\end{keywords}



\newcommand{\brackets}[1]{\left[{#1}\right]}

\newcommand{\exploss}{{L}}
\renewcommand{\poly}{{\rm poly}}
\newcommand{\elone}[1]{{\|{#1}\|_1}}
\def\llambda{\bm{\lambda}}
\def\numhyps{{N}}
\def\X{\mathcal{X}}
\def\hyp{\mathcal{H}}
\def\th{^\textrm{th}}
\def\dplus{\sum_{i:M_{ij}=1} D_{t}(i) }
\def\dminus{\sum_{i:M_{ij}=-1} D_{t}(i) }
\newcommand{\bh}{\hbar} 
\def\sumim{\sum_{i=1}^m}
\newcommand{\lambdaexp}[1]{{\llambda^{{#1}}}}
\newcommand{\reals}{\mathbb{R}}
\newcommand{\nolineskips}{%
\setlength{\parskip}{0pt}%
\setlength{\parsep}{0pt}%
\setlength{\topsep}{0pt}%
\setlength{\partopsep}{0pt}%
\setlength{\itemsep}{0pt}}
\newcommand{\pr}[2]{{\rm Pr}_{#1}\brackets{#2}} 
\newcommand{\sfrac}[2]{\mbox{$\frac{#1}{#2}$}}
\newcommand{\paren}[1]{\left({#1}\right)}
\newcommand{\hf}{H}  


\newcommand{\vtau}{\val^\tau_{(t)}}
\newcommand{\vnew}{\tilde{\val}_{(t)}}
\newcommand{\vtnew}{\val^{\tau+1}_{(t)}}
\newcommand{\Abs}{AdaBoost.\emph{S} }
\newcommand{\phis}{\phi^*}
\newcommand{\loss}[2]{\ell^{#1}( #2 )}
\newcommand{\nloss}[3]{\bar{\ell}_{#3}^{#1}( #2 )}
\newcommand{\dvlam}{\Delta\bm{\lambda}}
\newcommand{\vlamd}{\bm{\lambda}^{\dagger}}
\newcommand{\val}{\bm{\eta}}
\newcommand{\vals}{\bm{\eta}^*}
\newcommand{\lam}{\lambda}
\newcommand{\ds}{\Delta S}
\newcommand{\dr}{\Delta R}
\newcommand{\dz}{\Delta\zeta}
\newcommand{\dal}{\Delta\val}
\newcommand{\alf}{\val^\dagger}
\newcommand{\dth}{\Delta\theta}
\newcommand{\dtho}{\Delta\theta_1}
\newcommand{\dtht}{\Delta\theta_2}
\newcommand{\led}[2]{\delta_X(#1; #2)}
\newcommand{\leds}[3]{\delta_{#3}(#1; #2)}
\newcommand{\mal}{\widehat{(\M_F\val)}}
\newcommand{\lmin}{\lambda_{\min}}
\newcommand{\nval}{\widehat{\val}}
\renewcommand{\oval}{\tilde{\val}}
\newcommand{\dY}{\Delta Y}
\newcommand{\dy}{\Delta y}
\newcommand{\lf}{L}
\newcommand{\grad}{\nabla}
\newcommand{\Yt}{\tilde{Y}}
\newcommand{\ddz}{\Delta z}
\renewcommand{\abs}[1]{\left | #1 \right |}
\newcommand{\nlam}{\norm{\vlams}_1}
\newcommand{\zl}{Z}
\newcommand{\fl}{F}

\old{
\section{Introduction}

The AdaBoost algorithm of \citet{FreundSc97} was designed to combine
many ``weak'' hypotheses that perform slightly better than random
guessing into a ``strong'' hypothesis that has very low error.
Despite extensive theoretical and empirical study,
basic properties of AdaBoost's convergence are not fully
understood. In this work, we focus on one of those properties, namely,
to find convergence rates that hold in the absence of any simplifying
assumptions.
Such assumptions, relied upon in much of the preceding work, make it easier
to prove a fast convergence rate for AdaBoost, but often do not hold
in the cases
where AdaBoost is commonly applied.

AdaBoost can be viewed as a coordinate descent (or functional gradient
descent) algorithm that iteratively minimizes an objective function
$\exploss:\reals^n\rightarrow \reals$ called the \textit{exponential
  loss} \citep{Breiman99,FreanDo98,FriedmanHaTi00,%
  Friedman01,%
  MasonBaBaFr00,%
  OnodaRaMu98,%
  RatschOnMu01,%
  SchapireSi99%
}.
Given $m$ labeled training
examples $(x_1,y_1),\ldots,(x_m,y_m)$, where the $x_i$'s are in some
domain $\X$ and $y_i\in\{-1,+1\}$, and a finite (but typically very
large) space of weak hypotheses
$\hyp=\{\bh_1,\ldots,\bh_{\numhyps}\}$, where each
$\hbar_j:\X\rightarrow \{-1,+1\}$, the exponential loss
is defined as
\[ \exploss(\llambda) \eqdef \frac{1}{m}
\sumim \exp\left(-\sum_{j=1}^{\numhyps} \lambda_j y_i
  \bh_j(x_i)\right)
\]
where $\llambda=\langle \lambda_1,\ldots,\lambda_N \rangle$
is a vector of weights or parameters.
In each iteration, a coordinate descent algorithm moves some distance
along some coordinate direction $\lambda_j$.
For AdaBoost, the coordinate
directions correspond to the individual weak
hypotheses. Thus, on each round, AdaBoost chooses some weak hypothesis and
step length, and adds these to the current weighted combination of
weak hypotheses, which is equivalent to updating a single weight. The
direction and step length are so chosen that the resulting vector
$\llambda^t$ in iteration $t$ yields a lower value of the exponential
loss than in the previous iteration,
$\exploss(\llambda^t)<\exploss(\llambda^{t-1})$. This repeats until it
reaches a minimizer if one exists. It was shown by
\citet{CollinsScSi02}, and later by \citet{ZhangYu05}, that AdaBoost
asymptotically converges to the minimum possible exponential loss.
That is,
\[  \lim_{t\rightarrow\infty} \exploss(\lambdaexp{t})
            = \inf_{\llambda\in\reals^\numhyps} \exploss(\llambda).
\]
However, that work did not address a convergence rate to the minimizer
of the exponential loss.

Our work specifically addresses a recent conjecture of
\citet{Schapire10} stating that there exists a positive constant $c$
and a polynomial $\poly()$ such that for all training sets and all
finite sets of weak hypotheses, and for all $B>0$,
\begin{equation}
  \label{rate:schap_conj:eqn}
   \exploss(\lambdaexp{t}) \leq
     \min_{\llambda: \elone{\llambda}\leq B} \exploss(\llambda)
       +       {\frac{\poly(\log {\numhyps},m,B)}
                                   {t^c}
               }.
\end{equation}
In other words, the exponential loss of AdaBoost will be at most
$\varepsilon$ more than that of any other parameter vector $\llambda$
of $\ell_1$-norm bounded by $B$ in a number of rounds that is bounded
by a polynomial in $\log \numhyps$, $m$, $B$ and $1/\varepsilon$.  (We
require $\log \numhyps$ rather than $\numhyps$ since the number of
weak hypotheses will typically be extremely large.) Along with an
upper bound that is polynomial in these parameters, we also provide
lower bound constructions showing some polynomial dependence on
$B$ and $1/\eps$ is necessary. Without any additional assumptions on the
exponential loss $\exploss$, and without altering AdaBoost's
minimization algorithm for $\exploss$, the best known convergence rate
of AdaBoost prior to this work that we are aware of is that of
\citet{BickelRiZa06} who prove a bound on the rate of the form
$O(1/\sqrt{\log t})$.

We provide also a convergence rate of AdaBoost to the minimum value of
the exponential loss. Namely, within $C/\epsilon$ iterations, AdaBoost
achieves a value of the exponential loss that is at most $\epsilon$
more than the best possible value, where $C$ depends on the dataset.
This convergence rate is different from the one discussed above in
that it has better dependence on $\epsilon$ (in fact the dependence is
optimal, as we show), and does not depend on the best solution within
a ball of size $B$. However, this second convergence rate cannot be
used to prove \eqref{rate:schap_conj:eqn} since in
certain worst case situations, we show the constant $C$ may be larger
than $2^m$ (although usually it will be much smaller).

Within the proof of the second convergence rate, we provide a lemma
(called the \textit{decomposition lemma}) that shows that the training
set can be split into two sets of examples: the ``finite margin
set,'' and the ``zero loss set.'' Examples in the finite margin set
always make a positive contribution to the exponential loss, and they
never lie too far from the decision boundary. Examples in the zero
loss set do not have these properties. If we consider the exponential
loss where the sum is only over the finite margin set (rather than
over all training examples), it is minimized by a finite
$\llambda$. The fact that the training set can be decomposed into
these two classes is the key step in proving the second convergence
rate.

This problem of determining the rate of convergence is relevant in the
proof of the consistency of AdaBoost given by \citet{BartlettTr07},
where it has a direct impact on the rate at which AdaBoost converges
to the Bayes optimal classifier (under suitable assumptions). It may
also be relevant to practitioners who wish to have a guarantee on the
exponential loss value at iteration $t$
(although, in general, minimization of the exponential loss need not
be perfectly correlated with test accuracy).

There have been several works that make additional assumptions on the
exponential loss in order to attain a better bound on the rate, but
those assumptions are not true in general, and cases are known where
each of these assumptions are violated. For instance, better bounds
are proved by \citet{RatschMiWa02} using results from \citet{LuoTs92},
but these appear to require that the exponential loss be minimized by
a finite $\llambda$, and also depend on quantities that are not easily
measured. There are many cases where $\exploss$ does not have a finite
minimizer; in fact, one such case is provided by \citet{Schapire10}.
\citet{ShalevshwartzSi08} have proven bounds for a variant of
AdaBoost. \citet{ZhangYu05} also have given rates of convergence, but
their technique requires a bound on the change in the size of
$\llambda^t$ at each iteration that does not necessarily hold for
AdaBoost.  Many classic results are known on the convergence of
iterative algorithms generally \citep[see for
instance][]{LuenbergerYe08,BoydVa04}; however, these typically start
by assuming that the minimum is attained at some finite point in the
(usually compact) space of interest, assumptions that do not generally
hold in our setting.
When the weak learning assumption
holds, there is a parameter $\gamma>0$ that governs the improvement of
the exponential loss at each iteration. \citet{FreundSc97} and
\citet{SchapireSi99} showed that the exponential loss is at most
$e^{-2t\gamma^2}$ after $t$ rounds, so AdaBoost rapidly converges to
the minimum possible loss under this assumption.

In Section \ref{rate:sec:adab} we summarize the coordinate descent
view of AdaBoost. Section \ref{rate:B:sec} contains the proof of the
conjecture, with associated lower bounds proved in
Section~\ref{rate:B_lbnd:sec}.
Section~\ref{rate:eps:sec}
provides the $C/\epsilon$ convergence rate.
The proof of the decomposition lemma is given in
Section~\ref{rate:dec:sec}.


\section{Coordinate Descent View of AdaBoost}
\label{rate:sec:adab}
From the examples $(x_1,y_1),\ldots,(x_m,y_m)$ and hypotheses
$\hyp=\{\bh_1,\ldots,\bh_{\numhyps}\}$, AdaBoost iteratively computes
the function $F:\X\rightarrow \mathbb{R}$, where $\sign(F(x))$ can be
used as a classifier for a new instance $x$. The function $F$ is a
linear combination of the hypotheses. At each iteration $t$, AdaBoost
chooses one of the weak hypotheses $h_t$ from the set $\hyp$, and
adjusts its coefficient by a specified value $\alpha_t$. Then $F$ is
constructed after $T$ iterations as: $F(x) = \sum_{t=1}^T \alpha_t
h_t(x)$.  Figure \ref{rate:fig:adaboost} shows the AdaBoost algorithm
\citep{FreundSc97}.
\begin{figure}[t]
\hrule
\vspace{.3em}
{
\makebox[.4in][l]{Given:}
   $(x_1,y_1),\ldots,(x_m,y_m)$ where $x_i\in \X$, $y_i\in\{-1,+1\}$
\\
\makebox[.4in]{~}
   set $\hyp=\{\bh_1,\ldots,\bh_{\numhyps}\}$ of weak hypotheses
            $\bh_j:\X\rightarrow\{-1,+1\}$.
            \\
Initialize: $D_1(i) = 1/m$ for $i=1,\ldots,m$.\\
For $t = 1,\ldots, T$:
\vspace{-.8em}
\begin{itemize}
\nolineskips
\item Train weak learner using distribution $D_t$; that is, find weak
  hypothesis $h_t \in \hyp$ whose correlation $r_t \eqdef \E_{i\sim
    D_t}\enco{y_ih_t(x_i)}$ has maximum magnitude $\abs{r_t}$.
\item Choose $\displaystyle \alpha_t = \sfrac{1}{2}
  \ln\enct{\enc{1+r_t}/\enc{1-r_t}}$.
\item
 Update,
 for $i=1,\ldots,m$:
$\displaystyle
 D_{t+1}(i)
 = 
{D_t(i) \exp(-\alpha_t y_i h_t(x_i))}/
		     {Z_t}
$\\
where $Z_t$ is a normalization factor (chosen so that $D_{t+1}$
will be a distribution).
\end{itemize}
\vspace{-.8em}
Output the final hypothesis:
$ 
  F(x) = \sign\paren{\sum_{t=1}^T \alpha_t h_t(x)}
$.
}
\caption{The boosting algorithm AdaBoost.}
\label{rate:fig:adaboost}
\vspace{.3em}
\hrule
\vspace{-.7em}
\end{figure}

Since each $h_t$ is equal to $\bh_{j_t}$ for some $j_t$, $F$ can also
be written $F(x)= \sum_{j=1}^{\numhyps} \lambda_j \bh_j(x)$ for a
vector of values $\llambda=\langle
\lambda_1,\ldots\lambda_{\numhyps}\rangle$ (such vectors will
sometimes also be referred to as \emph{combinations}, since they represent
combinations of weak hypotheses). In different notation, we can write
AdaBoost as a coordinate descent algorithm on vector $\llambda$. We
define the \emph{feature matrix} $\M$ elementwise by $M_{ij}=y_i
\bh_j(x_i)$, so that this matrix contains all of the inputs to
AdaBoost (the training examples and hypotheses). Then the exponential
loss can be written more compactly as:
\[
L(\llambda) = \frac{1}{m}\sum_{i=1}^m e^{-(\M\llambda)_i}
\]
where $(\M\llambda)_i$, the $i\th$ coordinate of the vector
$\M\llambda$, is the (unnormalized) \emph{margin} achieved by vector
$\llambda$ on training example $i$.

Coordinate descent algorithms choose a coordinate at each iteration
where the directional derivative is the steepest, and choose a step
that maximally decreases the objective along that coordinate. To
perform coordinate descent on the exponential loss, we determine the
coordinate $j_t$ at iteration $t$ as follows, where $\mathbf{e}_j$ is
a vector that is 1 in the $j\th$ position and 0 elsewhere:
\begin{eqnarray}\label{rate:jteqn}
  j_t&\in& \argmax_j \abs{\left(- \frac{dL(\llambda^{t-1}+\alpha
        \mathbf{e}_j)}{d\alpha} \Big|_{\alpha=0}\right)} =
  \argmax_j \frac{1}{m}\abs{\sum_{i=1}^m e^{-(\M\llambda^{t-1})_i}M_{ij}}. 
\end{eqnarray}
We can show that this is equivalent to the weak learning step of
AdaBoost. Unraveling the recursion in Figure \ref{rate:fig:adaboost}
for AdaBoost's weight vector $D_t$, we can see that $D_t(i)$ is
proportional to
\[
\exp\left(-\sum_{t'< t} \alpha_{t'} y_i h_{t'}(x_i)\right).
\]
The term in the exponent can also be rewritten in terms of the vector
$\llambda^t$, where $\lambda^t_j$ is the sum of $\alpha_t$'s where
hypothesis $\bh_j$ was chosen: $\sum_{t'< t} \alpha_{t'}
\mathbf{1}_{[\bh_j=h_{t'}]} = \lambda_{t-1,j}$. The term in the exponent
is:
\[
\sum_{t'< t} \alpha_{t'} y_i h_{t'}(x_i)=
\sum_j\sum_{t'< t}\alpha_{t'}
\mathbf{1}_{[\bh_j=h_{t'}]}y_i\bh_j(x_i)=\sum_j
\lambda^{t-1}_jM_{ij}=(\M\llambda^{t-1})_i,
\]
where $(\cdot)_i$ denotes the $i$th component of a vector. This means
$D_t(i)$ is proportional to $e^{-(\M\llambda^{t-1})_i}$.
Eq. \eqref{rate:jteqn} can now be rewritten as
\[
j_t\in \argmax_j \abs{\sum_i D_{t}(i) M_{ij}} =
\argmax_j\Big | \E_{i\sim D_t}\enco{M_{ij}} \Big | =
\argmax_j\Big | \E_{i\sim D_t}\enco{y_ih_j(x_i)} \Big |,
\]
which is exactly the way AdaBoost chooses a weak hypothesis in each
round (see Figure~\ref{rate:fig:adaboost}).  The correlation
$\sum_iD_{t}(i)M_{ij_t}$ will be denoted by $r_t$ and its absolute
value $\abs{r_t}$ denoted by $\delta_t$. The quantity $\delta_t$ is
commonly called the \textit{edge} for round $t$. The distance
$\alpha_t$ to travel along direction $j_t$ is found for coordinate
descent via a linesearch  \citep[see for
instance][]{MasonBaBaFr00}:
\begin{equation*}
  0=-\frac{dL(\llambda_t+\alpha_t \mathbf{e}_{j_t})}{d\alpha_t} =
  \sum_i e^{-\left(M(\llambda_t + \alpha_t
      \mathbf{e}_{j_t})\right)_i}M_{ij_t} 
\end{equation*}
and dividing both sides by the normalization factor,
\begin{equation*}
0= \dplus e^{-\alpha_t} - \dminus e^{\alpha_t}  = (1+r_t)
e^{-\alpha_t} - (1-r_t)e^{\alpha_t} \implies \alpha_t =
\frac{1}{2}\ln\enc{\frac{1+r_t}{1-r_t}}, 
\end{equation*}
just as in Figure \ref{rate:fig:adaboost}. Thus, AdaBoost is equivalent to
coordinate descent on $L(\llambda)$. With this choice of step length, it
can be shown \citep{FreundSc97} that the
exponential loss drops by an amount depending on the edge:
\begin{eqnarray*}
  L(\llambda_{t}) &=& L\left(\llambda_{t-1} +\alpha_t
    \mathbf{e_{j_t}}\right)=\left(\dplus e^{-\alpha_t} + \dminus
    e^{\alpha_t}\right)L(\llambda_{t-1})\\ 
  &=&\left((1+r_t) e^{-\alpha_t} + (1-r_t)
    e^{\alpha_t}\right)L(\llambda_{t-1}) =
  \left(2\sqrt{(1+r_t)(1-r_t)} \right)L(\llambda_t)\\
  &=& \left(\sqrt{1-r_t^2}\right) L(\llambda_{t-1})
  = \left(\sqrt{1-\delta_t^2}\right) L(\llambda_{t-1}). 
\end{eqnarray*}
Our rate bounds also hold when the weak-hypotheses are
confidence-rated, that is, giving real-valued predictions in
$[-1,+1]$, so that $h:\X\to[-1,+1]$. In that case, the criterion for
picking a weak hypothesis in each round remains the same, that is, at
round $t$, an $\bh_{j_t}$ maximizing the absolute correlation $ j_t\in
\textrm{argmax}_j \abs{\sum_{i=1}^m e^{-(\M\llambda^{t-1})_i}M_{ij}}$,
is chosen, where $M_{ij}$ may now be non-integral. An exact analytical
line search is no longer possible, but if the step size is chosen in
the same way,
\begin{equation}
  \label{rate:step:eqn}
\alpha_t = \frac{1}{2}\ln\enc{\frac{1+r_t}{1-r_t}},
\end{equation}
then \citet{FreundSc97} and \citet{SchapireSi99} show that a similar
drop in the loss is still guaranteed:
\begin{equation}
  \label{rate:drop:eqn}
  \lf(\vlam^{t}) \leq \lf(\vlam^{t-1})\sqrt{1-\delta_t^2}.
\end{equation}
With confidence rated hypotheses, other implementations may choose the
step size in a different way. However, in this paper, by ``AdaBoost'' we
will always mean the version in \citep{FreundSc97, SchapireSi99} which
chooses step sizes as in \eqref{rate:step:eqn}, and enjoys the loss
guarantee as in \eqref{rate:drop:eqn}.  That said, all our proofs work
more generally, and are robust to numerical inaccuracies in the
implementation. In other words, even if the previous conditions are
violated by a small amount, similar bounds continue to hold, although
we leave out explicit proofs of this fact to simplify the
presentation.
}
\longv{
\section{First convergence rate: Convergence to any target loss}
\label{rate:B:sec}

In this section, we bound the number of rounds of AdaBoost required to
get within $\eps$ of the loss attained by a parameter vector $\vlams$
as a function of $\eps$ and the $\ell_1$-norm $\nlam$. The vector
$\vlams$ serves as a reference based on which we define the target
loss $\lf(\vlams)$, and we will show that its $\ell_1$-norm measures
the difficulty of attaining the target loss in a specific sense. We prove a
bound polynomial in $1/\eps$, $\nlam$ and the number of examples $m$,
showing \eqref{rate:schap_conj:eqn} holds, thereby resolving
affirmatively the open problem posed in \citep{Schapire10}. Later in
the section we provide lower bounds showing how a polynomial
dependence on both parameters is necessary.

\subsection{Upper Bound}
\label{rate:ub:sec}
The main result of this section is the following rate upper bound.
\begin{theorem}
  \label{rate:B_rate:thm}
  For any $\vlams\in\R^N$, AdaBoost achieves loss at most
  $\lf(\vlams)+\eps$ in at most $13\nlam^6\eps^{-5}$ rounds. 
\end{theorem}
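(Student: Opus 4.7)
The plan is to turn the progress inequality \eqref{rate:drop:eqn} into a recursion for the suboptimality $\phi_t \eqdef \lf(\vlam^t)-\lf(\vlams)$ and then solve it. Since $\sqrt{1-\delta_t^2}\le 1-\delta_t^2/2$, \eqref{rate:drop:eqn} yields $\phi_{t-1}-\phi_t\ge \tfrac12\,\lf(\vlam^{t-1})\,\delta_t^2$, so it suffices to lower bound the edge $\delta_t$ as a function of $\phi_{t-1}$ and $\nlam$.

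For that bound I would start from convexity of $\lf$, which gives $\phi_{t-1}\le \langle -\nabla\lf(\vlam^{t-1}),\,\vlams-\vlam^{t-1}\rangle$. Each coordinate derivative satisfies $|\partial_j \lf(\vlam^{t-1})|=\lf(\vlam^{t-1})|\sum_i D_t(i)M_{ij}|\le \lf(\vlam^{t-1})\,\delta_t$, but a naive H\"older step produces the vacuous bound $\phi_{t-1}\le \lf(\vlam^{t-1})\,\delta_t\,(\nlam+\norm{\vlam^{t-1}}_1)$, since $\norm{\vlam^{t-1}}_1$ is not controlled by the hypothesis. The main obstacle is to strip out this $\norm{\vlam^{t-1}}_1$ dependence. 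The decisive tool is the entropic identity
\begin{equation*}
  \sum_i D_t(i)(\M\vlam^{t-1})_i \;=\; H(D_t)-\log\bigl(m\,\lf(\vlam^{t-1})\bigr),
\end{equation*}
which is immediate from $D_t(i)\propto e^{-(\M\vlam^{t-1})_i}$. Substituting this for the $\vlam^{t-1}$ side of the inner product and discarding $H(D_t)\ge 0$ yields the almost-duality inequality
\begin{equation*}
  \phi_{t-1} \;\le\; \lf(\vlam^{t-1})\bigl[\nlam\,\delta_t + \log m + \log\lf(\vlam^{t-1})\bigr].
\end{equation*}

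Combining this with the progress bound reduces the proof to a regime analysis on the size of $\lf(\vlam^{t-1})$. In the easy regime where $\phi_{t-1}/\lf(\vlam^{t-1})$ dominates the additive log correction, the above gives $\delta_t\gtrsim \phi_{t-1}/(\nlam\,\lf(\vlam^{t-1}))$ and hence a quadratic decrease $\phi_{t-1}-\phi_t\gtrsim \phi_{t-1}^2/\nlam^2$, far stronger than what the theorem requires. The hard regime is when the log correction dominates; this is where I expect the technical challenge to lie, and the task is to extract an effective recursion of the form $\phi_{t-1}-\phi_t\gtrsim \phi_{t-1}^6/\nlam^6$ by carefully tracking how $\lf(\vlam^{t-1})$ shrinks against the log correction across rounds. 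Once such a recursion is in place, a standard continuous-limit analysis of $\phi_{t-1}-\phi_t\ge c\phi_{t-1}^p/\nlam^q$ gives $\phi_T=O(\nlam^{q/(p-1)}/T^{1/(p-1)})$, and the exponents $(p,q)=(6,6)$ produce exactly the target bound $T\le 13\nlam^6/\eps^5$, with the explicit constant $13$ coming out of careful discrete summation of the recursion.
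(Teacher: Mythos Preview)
Your entropic identity is correct, and the resulting inequality
\[
\phi_{t-1}\le L(\llambda^{t-1})\bigl[\nlam\,\delta_t+\log\bigl(mL(\llambda^{t-1})\bigr)\bigr]
\]
is valid. But the ``hard regime'' you flag is not merely a technical challenge: it is where your argument actually breaks. Rearranging gives
\[
\nlam\,\delta_t\;\ge\;\frac{\phi_{t-1}}{L(\llambda^{t-1})}-\log\bigl(mL(\llambda^{t-1})\bigr),
\]
and since $L(\llambda^{t-1})\ge L(\vlams)$ for all $t$, the right–hand side becomes negative (hence vacuous) as soon as $\phi_{t-1}\lesssim L(\vlams)\log\bigl(mL(\vlams)\bigr)$. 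For a typical target with $L(\vlams)$ bounded away from $0$ (say $L(\vlams)=1/2$) this threshold is of constant order, so your inequality gives no edge bound at all once $\phi_{t-1}$ is even moderately small. Your proposed rescue, ``tracking how $L(\llambda^{t-1})$ shrinks against the log correction,'' cannot work for the same reason: $L(\llambda^{t-1})$ never drops below $L(\vlams)$, so $\log\bigl(mL(\llambda^{t-1})\bigr)$ never goes below $\log\bigl(mL(\vlams)\bigr)$, and there is no mechanism to kill the additive term. The recursion $\phi_{t-1}-\phi_t\gtrsim\phi_{t-1}^6/\nlam^6$ does not emerge from this route.

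The paper's proof avoids this obstruction by \emph{not} trying to eliminate the dependence on $\llambda^{t-1}$. Instead of splitting $\vlams-\llambda^{t-1}$ and handling $\llambda^{t-1}$ via entropy, it keeps the difference intact and works with $S_t\eqdef\inf\{\|\llambda-\llambda^{t}\|_1:L(\llambda)\le L(\vlams)\}$. Non-negativity of relative entropy against the target distribution (rather than against the uniform distribution, which is effectively what your entropy identity uses) yields the clean bound $\delta_t\ge R_{t-1}/S_{t-1}$, where $R_t=\ln L(\llambda^t)-\ln L(\vlams)$. The whole weight of the argument then shifts to controlling $S_t$: one shows that the step length satisfies $|\alpha_t|\le 2\Delta R_t/\delta_t$ (via the function $\Upsilon(x)=-\ln(1-x^2)/\ln\frac{1+x}{1-x}\ge x/2$), hence $\Delta S_t/S_{t-1}\le 2\Delta R_t/R_{t-1}$, which implies the potential $R_t^2S_t$ is non-increasing. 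Since $R_0\le B$ and $S_0\le B$, this gives $S_t\le B^3R_t^{-2}$ and therefore $\delta_t\ge R_{t-1}^3/B^3$; plugging into the progress inequality and summing yields the $13\nlam^6\eps^{-5}$ bound. The missing idea in your proposal is precisely this distance-tracking / potential-function step.
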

}
\old{
The high level idea behind the proof of the theorem is as follows. To
show a fast rate, we require a large edge in each round, as indicated
by \eqref{rate:drop:eqn}. A large edge is guaranteed if the size of
the current solution of AdaBoost is small. Therefore AdaBoost makes
good progress if the size of its solution does not grow too fast. On
the other hand, the increase in size of its solution is given by the
step length, which in turn is proportional to the edge achieved in
that round. Therefore, if the solution size grows fast, the loss also
drops fast. Either way the algorithm makes good progress. In the rest
of the section we make these ideas concrete through a sequence of
lemmas.

We provide some more notation. Throughout, $\vlams$ is fixed, and its
$\ell_1$-norm is denoted by $B$ \citep[matching the notation
in][]{Schapire10}. One key parameter is the suboptimality $R_t$ of
AdaBoost's solution measured via the logarithm of the exponential
loss:
\[
R_t \eqdef \ln \lf(\vlam^t) - \ln\lf(\vlams).
\]
Another key parameter is the $\ell_1$-distance $S_t$ of AdaBoost's
solution from the closest combination that achieves the target loss:
\[
S_t \eqdef \inf_{\vlam}\set{\norm{\vlam - \vlam^t}_1: \lf(\vlam) \leq
  \lf(\vlams)}.
\]
We will also be interested in how they change as captured by
\begin{eqnarray*}
  \dr_t \eqdef R_{t-1} - R_t, &&
  \ds_t \eqdef S_t - S_{t-1}.
\end{eqnarray*}
Notice that $\dr_t$ is always non-negative since AdaBoost decreases
the loss, and hence the suboptimality, in each round. Let $T_0$ be the
bound on the number of rounds in Theorem~\ref{rate:B_rate:thm}. We
assume without loss of generality that $R_0,\ldots,R_{T_0}$ and
$S_0,\ldots,S_{T_0}$ are all strictly positive, since otherwise the
theorem holds trivially. Also, in the rest of the section, we restrict
our attention entirely to the first $T_0$ rounds of boosting. We first
show that a $\poly(B,\eps^{-1})$ rate of convergence follows if the
edge is always polynomially large compared to the suboptimality.
\begin{lemma}
  \label{rate:edge_B_rate:lem}
  If for some constants $c_1,c_2$, where $c_2>1/2$, the edge satisfies
  $\delta_t \geq B^{-c_1}R_{t-1}^{c_2}$ in each round $t$, then
  AdaBoost achieves at most $L(\vlams) + \eps$ loss after
  $2B^{2c_1}(\eps\ln 2)^{1-2c_2}$ rounds.
\end{lemma}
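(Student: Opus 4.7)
The plan is to convert the hypothesized edge bound into a nonlinear recursion on $R_t$, integrate it using a carefully chosen potential, and then translate the resulting bound on $R_T$ into the additive loss bound. The first step combines the edge hypothesis with the loss-drop guarantee \eqref{rate:drop:eqn}: taking logs of $L(\vlam^t)\leq L(\vlam^{t-1})\sqrt{1-\delta_t^2}$ and subtracting $\ln L(\vlams)$ gives $R_t \leq R_{t-1} + \tfrac{1}{2}\ln(1-\delta_t^2)$, and the inequality $-\ln(1-x)\geq x$ on $[0,1)$ yields the key recursion $\dr_t \geq \tfrac{1}{2}\delta_t^2 \geq \tfrac{1}{2}B^{-2c_1}R_{t-1}^{2c_2}$.

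For the integration step, set $\alpha\eqdef 2c_2>1$ and $\Phi_t\eqdef R_t^{1-\alpha}$. Since $R\mapsto R^{1-\alpha}$ is convex on $(0,\infty)$ with derivative $(1-\alpha)R^{-\alpha}$, the tangent-line bound gives $\Phi_t - \Phi_{t-1}\geq(\alpha-1)R_{t-1}^{-\alpha}\dr_t$. Plugging in the recursion, the per-round potential increment is at least $\tfrac{1}{2}(\alpha-1)B^{-2c_1}$, \emph{independent of $R_{t-1}$}. Telescoping over $T$ rounds (and discarding the nonnegative $\Phi_0$) yields $R_T^{1-\alpha}\geq \tfrac{T(\alpha-1)}{2B^{2c_1}}$, i.e., $R_T\leq\bigl(\tfrac{2B^{2c_1}}{(2c_2-1)T}\bigr)^{1/(2c_2-1)}$.

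For the final conversion, assume without loss of generality that $L(\vlams)\leq 1$ and $\eps\leq 1$, since otherwise $L(\vlam^0)=1\leq L(\vlams)+\eps$ already. Using $2^{\eps}\leq 1+\eps$ on $[0,1]$, forcing $R_T\leq \eps\ln 2$ is enough, because $L(\vlam^T)=L(\vlams)e^{R_T}\leq L(\vlams)\cdot 2^{\eps}\leq L(\vlams)(1+\eps)\leq L(\vlams)+\eps$. Inverting the potential bound, $R_T\leq\eps\ln 2$ holds once $T\geq \frac{2B^{2c_1}}{2c_2-1}(\eps\ln 2)^{1-2c_2}$; in the intended regime $c_2\geq 1$ the factor $(2c_2-1)^{-1}\leq 1$ is absorbed, recovering the lemma's bound.

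The main obstacle will be the potential-function step. The recursion $\dr_t\geq \tfrac{1}{2}B^{-2c_1}R_{t-1}^{2c_2}$ is nonlinear, and a crude estimate would give a bound depending on $R_0=-\ln L(\vlams)$, which can be arbitrarily large when $L(\vlams)$ is tiny. The specific choice $\Phi_t = R_t^{1-2c_2}$ is what makes the per-round increment $R_{t-1}$-independent, so that telescoping yields a bound on $R_T$ depending only on $T$ and the constants $B,c_1,c_2$. Everything else is bookkeeping.
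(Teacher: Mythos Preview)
Your proposal is correct and follows essentially the same route as the paper: both derive the recursion $\dr_t \geq \tfrac{1}{2}B^{-2c_1}R_{t-1}^{2c_2}$ from \eqref{rate:drop:eqn}, integrate it via the potential $R_t^{1-2c_2}$ (the paper packages this step as a separate inductive lemma in the appendix, whereas you inline it with the tangent-line inequality), and convert $R_T\leq\eps\ln 2$ to the additive bound using $2^\eps\leq 1+\eps$ on $[0,1]$. Your caveat that the $(2c_2-1)^{-1}$ factor only disappears for $c_2\geq 1$ is shared by the paper's own argument and is harmless since the sole application (via Lemmas~\ref{rate:B_edge:lem} and \ref{rate:SR:lem}) uses $c_2=3$.
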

\begin{proof}
  From the definition of $R_t$ and \eqref{rate:drop:eqn} we have
  \begin{equation}
    \label{rate:dr:eqn}
  \dr_t = \ln\lf(\vlam^{t-1}) - \ln\lf(\vlam^{t}) \geq
  -\frac{1}{2}\ln(1-\delta_t^2).
  \end{equation}
  Combining the above with the inequality $e^x \geq 1+x$, and the
  assumption on the edge
  \[
  \dr_t \geq  -\frac{1}{2}\ln(1-\delta_t^2)
  \geq \frac{1}{2}\delta_t^2 \geq \frac{1}{2}B^{-2c_1}R_{t-1}^{2c_2}.
  \]
  Let $T=\ceil{2B^{2c_1}(\eps\ln 2)^{1-2c_2}}$ be the bound on the
  number of rounds in the lemma. If any of $R_0,\ldots,R_T$ is
  negative, then by monotonicity $R_T < 0$ and we are done. Otherwise,
  they are all non-negative. Then, applying Lemma~\ref{rate:rec:lem}
  from the Appendix to the sequence $R_0,\ldots,R_T$, and using
  $c_2>1/2$ we get
  \[
  R_T^{1-2c_2} \geq R_0^{1-2c_2} + c_2B^{-2c_1}T > (1/2)B^{-2c_1}T \geq
  (\eps\ln 2)^{1-2c_2} \implies R_T < \eps\ln 2.
  \]
  If either $\eps$ or $\lf(\vlams)$ is greater than 1, then the lemma
  follows since $\lf(\vlam^T) \leq \lf(\vlam^0) = 1 < \lf(\vlams) +
  \eps$. Otherwise, 
  \[
  \lf(\vlam^T) < \lf(\vlams) e^{\eps\ln 2}
  \leq \lf(\vlams)(1+\eps)
  \leq \lf(\vlams) + \eps,
  \]
  where the second inequality uses $e^x \leq 1 + (1/\ln 2)x$ for $x\in
  [0,\ln 2]$.
\end{proof}
We next show that large edges are achieved provided $S_t$ is small
compared to $R_t$.
\begin{lemma}
  \label{rate:B_edge:lem}
  In each round $t$, the edge satisfies $\delta_t \geq
  R_{t-1}/S_{t-1}$. 
\end{lemma}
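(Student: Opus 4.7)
The plan is to leverage the variational characterization of the edge via the AdaBoost distribution $D_t$, and relate movement in parameter space to change in log-loss using convexity.

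First, I would fix a combination $\vlamd$ achieving $\lf(\vlamd)\leq\lf(\vlams)$ with $\|\vlamd-\vlam^{t-1}\|_1$ arbitrarily close to $S_{t-1}$ (pass to a minimizing sequence if the infimum is not attained; the conclusion then follows by taking limits). Write $\dvlam = \vlamd - \vlam^{t-1}$. Using the derivation from Section~\ref{rate:sec:adab}, $D_t(i)$ is proportional to $e^{-(\M\vlam^{t-1})_i}$, so one can factor
\[
\lf(\vlamd) \;=\; \frac{1}{m}\sum_{i=1}^m e^{-(\M\vlam^{t-1})_i}e^{-(\M\dvlam)_i}
\;=\; \lf(\vlam^{t-1})\sum_{i=1}^m D_t(i)\,e^{-(\M\dvlam)_i}.
\]

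Next I would lower bound the right-hand sum. By Jensen's inequality applied to the convex function $e^{-x}$,
\[
\sum_i D_t(i)\,e^{-(\M\dvlam)_i} \;\geq\; \exp\!\left(-\sum_i D_t(i)(\M\dvlam)_i\right) \;=\; \exp\!\left(-\sum_j \dvlam_j\, d_j\right),
\]
where $d_j\eqdef\sum_i D_t(i)M_{ij}$ is the correlation of weak hypothesis $\bh_j$ under $D_t$. By the $\ell_1/\ell_\infty$ duality (Hölder), $|\sum_j \dvlam_j d_j|\leq \|\dvlam\|_1\cdot\max_j|d_j|=\|\dvlam\|_1\cdot\delta_t$, since AdaBoost picks $j_t$ maximizing $|d_j|$. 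Thus
\[
\lf(\vlamd) \;\geq\; \lf(\vlam^{t-1})\,\exp\!\bigl(-\delta_t\,\|\dvlam\|_1\bigr).
\]

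Finally, since $\lf(\vlamd)\leq\lf(\vlams)$, taking logarithms and rearranging gives $\delta_t\,\|\dvlam\|_1 \geq \ln \lf(\vlam^{t-1})-\ln\lf(\vlams)=R_{t-1}$. Letting $\|\dvlam\|_1\to S_{t-1}$ yields $\delta_t\geq R_{t-1}/S_{t-1}$, as required. The only subtlety is that the infimum defining $S_{t-1}$ need not be achieved, which is handled by the limiting argument above; beyond that, the argument is a clean combination of Jensen's inequality and $\ell_1$-$\ell_\infty$ duality, so I expect no genuine obstacles.
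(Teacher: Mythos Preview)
Your proof is correct and essentially the same as the paper's. The paper phrases the convexity step as non-negativity of the relative entropy $\texttt{RE}(D_{\vlam^{t-1}}\,\|\,D_{\vlam})\geq 0$, which unwinds to precisely your inequality $\sum_i D_t(i)(\M\dvlam)_i \geq \ln L(\vlam^{t-1})-\ln L(\vlam)$; your Jensen argument for $e^{-x}$ yields the identical bound, and from there both proofs apply the same $\ell_1/\ell_\infty$ duality and take the infimum over admissible $\vlam$.
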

\begin{proof}
  For any combination $\vlam$, define $p_{\vlam}$ as the distribution
  on examples $\set{1,\ldots,m}$ that puts weight proportional to the
  loss $D_{\vlam}(i) = e^{-(\M\vlam)_i}/(m\lf(\vlam))$. Choose any
  $\vlam$ suffering at most the target loss $\lf(\vlam) \leq
  \lf(\vlams)$. By non-negativity of relative entropy we get
  \begin{eqnarray}
    0 &\leq& \re{D_{\vlam^{t-1}}}{D_{\vlam}} 
    = \sum_{i=1}^m D_{\vlam^{t-1}}\ln
    \enc{\frac{\frac{1}{m}e^{-(\M\vlam^{t-1})_i}/\lf(\vlam^{t-1})}
      {\frac{1}{m}e^{-(\M\vlam)_i}/\lf(\vlam)}} \nonumber \\
    \label{rate:re:eqn}
    &=& -R_{t-1} +
  \sum_{i=1}^mD_{\vlam^{t-1}}(i)\enc{\M\vlam - \M\vlam^{t-1}}_i.
  \end{eqnarray}
  Note that $D_{\vlam^{t-1}}$ is the distribution $D_t$ that AdaBoost
  creates in round $t$. The above summation can be rewritten as
  \begin{eqnarray}
    \sum_{i=1}^mD_{\vlam^{t-1}}(i)
      \sum_{j=1}^N\enc{\lam_j-\lam_j^{t-1}}M_{ij}
      &=& 
      \sum_{j=1}^N\enc{\lam_j-\lam_j^{t-1}}\sum_{i=1}^mD_t(i)M_{ij}
    \nonumber \\
    &\leq&
    \enc{\sum_{j=1}^N\abs{\lam_j-\lam_j^{t-1}}}
    \max_j\abs{\sum_{i=1}^mD_t(i)M_{ij}}\nonumber \\
    &=&
    \label{rate:re_ineq:eqn}
    \delta_t\norm{\vlam-\vlam^{t-1}}_1 .
  \end{eqnarray}
  Since the previous holds for any $\vlam$ suffering less than the
  target loss, the last expression is at most
  $\delta_tS_{t-1}$. Combining this with \eqref{rate:re_ineq:eqn}
  completes the proof.
\end{proof}
To complete the proof of Theorem~\ref{rate:B_rate:thm}, we show $S_t$
is small compared to $R_t$ in rounds $t\leq T_0$ (during which we have
assumed $S_t,R_t$ are all positive). In fact we prove:
\begin{lemma}
  \label{rate:SR:lem}
  For any $t \leq T_0$, $S_t \leq B^3R_t^{-2}$.
\end{lemma}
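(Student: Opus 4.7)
The plan is to derive a per-round recursion coupling the growth of $S_t$ to the decrease of $R_t$, and then telescope. First, since AdaBoost changes $\vlam^{t-1}$ only in coordinate $j_t$ by $\alpha_t$, applying the triangle inequality inside the infimum defining $S$ gives $S_t \le S_{t-1} + |\alpha_t|$. Next, I would bound the step size by the per-round progress: from $|\alpha_t| = \tfrac{1}{2}\ln\tfrac{1+\delta_t}{1-\delta_t}$ and $\dr_t \ge -\tfrac{1}{2}\ln(1-\delta_t^2)$ (the latter from \eqref{rate:dr:eqn}), I need the elementary inequality
\[
\tfrac{\delta}{2}\ln\tfrac{1+\delta}{1-\delta} \;\le\; -\ln(1-\delta^2), \qquad \delta\in[0,1),
\]
which rearranges to $|\alpha_t|\le 2\dr_t/\delta_t$. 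Substituting the edge bound $\delta_t \ge R_{t-1}/S_{t-1}$ from Lemma~\ref{rate:B_edge:lem} then yields
\[
S_t \;\le\; S_{t-1}\paren{1 + \frac{2\dr_t}{R_{t-1}}}.
\]

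Taking logarithms, using $\ln(1+x)\le x$, and telescoping over $s=1,\ldots,t$, I obtain
\[
\ln\frac{S_t}{S_0} \;\le\; 2\sum_{s=1}^t \frac{\dr_s}{R_{s-1}} \;\le\; 2\ln\frac{R_0}{R_t},
\]
where the second step compares each summand $\dr_s/R_{s-1}$ to $\int_{R_s}^{R_{s-1}} dR/R = \ln(R_{s-1}/R_s)$, valid because $R_s$ is positive and monotonically decreasing throughout $s\le T_0$. Exponentiating gives $S_t \le S_0 R_0^2/R_t^2$.

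To close, I need $S_0 R_0^2 \le B^3$. Since $\vlam^0 = \vzero$ and $\vlams$ itself attains loss $L(\vlams)$, clearly $S_0 \le \norm{\vlams}_1 = B$. For $R_0 = -\ln L(\vlams)$: every entry of $\M$ lies in $[-1,1]$, so every margin satisfies $|(\M\vlams)_i| \le \norm{\vlams}_1 = B$, giving $L(\vlams) \ge e^{-B}$ and hence $R_0 \le B$. Combining yields $S_t \le B \cdot B^2/R_t^2 = B^3/R_t^2$.

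The one non-routine step that I expect to be the main obstacle is the step-size inequality $|\alpha_t|\le 2\dr_t/\delta_t$: the calculus verification is short (the difference of the two sides vanishes at $\delta=0$ together with its first derivative, and has non-negative second derivative $2\delta^2/(1-\delta^2)^2$), but it is crucial that the right-hand side is linear in $1/\delta_t$ with constant exactly $2$, so that after invoking $\delta_t \ge R_{t-1}/S_{t-1}$ and telescoping, the exponent on $R_t$ comes out to exactly $2$ and the leading constant matches $S_0 R_0^2 \le B^3$.
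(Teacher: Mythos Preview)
Your proof is correct and follows essentially the same route as the paper: the key step-size inequality $|\alpha_t|\le 2\dr_t/\delta_t$ is exactly the bound $\Upsilon(\delta)\ge \delta/2$ (with $\Upsilon(x)=-\ln(1-x^2)/\ln\tfrac{1+x}{1-x}$) that the paper invokes in its Lemma~\ref{rate:dsdr:lem}, and your base-case estimates $S_0\le B$, $R_0\le B$ match the paper's verbatim. The only cosmetic difference is in the telescoping: the paper shows the one-step monotonicity $R_t^2S_t\le R_{t-1}^2S_{t-1}$ directly via $(1-\dr_t/R_{t-1})^2(1+\ds_t/S_{t-1})\le\exp(-2\dr_t/R_{t-1}+\ds_t/S_{t-1})\le 1$, whereas you take logarithms and use an integral comparison---the two arguments are equivalent.
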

This, along with Lemmas~\ref{rate:edge_B_rate:lem} and
\ref{rate:B_edge:lem}, immediately proves
Theorem~\ref{rate:B_rate:thm}.  The bound on $S_t$ in
Lemma~\ref{rate:SR:lem} can be proven if we can first show $S_t$ grows
slowly compared to the rate at which the suboptimality $R_t$
falls. Intuitively this holds since growth in $S_t$ is caused by a
large step, which in turn will drive down the suboptimality. In fact
we can prove the following.
\begin{lemma}
  \label{rate:dsdr:lem}
  In any round $t\leq T_0$, we have $\frac{2\dr_t}{R_{t-1}} \geq
  \frac{\ds_t}{S_{t-1}}$.
\end{lemma}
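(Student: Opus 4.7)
The plan is to bound $\ds_t$ from above and $\dr_t$ from below, both in terms of the edge $\delta_t$, and then combine these with Lemma~\ref{rate:B_edge:lem} to reduce the claim to a single-variable analytic inequality in $\delta_t$.

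For the upper bound on $\ds_t$, I would pick an (approximate) minimizer $\vlamd$ in the definition of $S_{t-1}$, so that $\lf(\vlamd)\le\lf(\vlams)$ and $\|\vlamd-\vlam^{t-1}\|_1$ is essentially $S_{t-1}$. Since $\vlamd$ remains feasible for $S_t$, the triangle inequality applied to $\vlam^t=\vlam^{t-1}+\alpha_t\ve_{j_t}$ gives $S_t\le S_{t-1}+|\alpha_t|$, i.e.\ $\ds_t\le|\alpha_t|=\tfrac12\ln\frac{1+\delta_t}{1-\delta_t}$ by the step-size formula~\eqref{rate:step:eqn}. For the lower bound on $\dr_t$, equation~\eqref{rate:dr:eqn} already records that $2\dr_t\ge-\ln(1-\delta_t^2)$. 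Lemma~\ref{rate:B_edge:lem} yields $S_{t-1}\ge R_{t-1}/\delta_t$ (with $\delta_t>0$, since otherwise this would contradict $R_{t-1}>0$), so after dividing the desired inequality $2\dr_t\,S_{t-1}\ge\ds_t\,R_{t-1}$ through by $R_{t-1}>0$ and plugging in the three bounds above, the claim reduces to the scalar inequality
\[
\frac{-2\ln(1-x^2)}{x}\;\ge\;\ln\frac{1+x}{1-x}, \qquad x=\delta_t\in(0,1).
\]

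The only real work is verifying this scalar inequality, which I would do via power series. Setting $h(x)=\ln\frac{1}{1-x^2}-\tfrac{x}{2}\ln\frac{1+x}{1-x}$, one has $h(0)=0$ and
\[
h'(x)=\frac{x}{1-x^2}-\frac12\ln\frac{1+x}{1-x}=\sum_{k\ge0}x^{2k+1}-\sum_{k\ge0}\frac{x^{2k+1}}{2k+1}\;\ge\;0
\]
term by term on $[0,1)$, so $h\ge 0$ and the scalar inequality follows. Conceptually, the only way for $\vlam^t$ to drift far from the target set (driving up $S_t$) is to take a large step $\alpha_t$, but the AdaBoost line search ties a large step to a correspondingly large drop in log-loss, and the two logarithmic expressions align exactly so that $R$ falls at least twice as fast relative to $R_{t-1}$ as $S$ grows relative to $S_{t-1}$.
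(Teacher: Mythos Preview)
Your argument is correct and follows the same route as the paper: bound $\ds_t\le|\alpha_t|$, bound $\dr_t$ via \eqref{rate:dr:eqn}, and invoke Lemma~\ref{rate:B_edge:lem}, reducing everything to the scalar inequality $-\ln(1-x^2)\big/\ln\frac{1+x}{1-x}\ge x/2$, which the paper packages as $\Upsilon(x)\ge x/2$ and cites from \citet{RatschWa05} and \citet{RudinScDa07}. The only difference is that you supply a self-contained power-series proof of this inequality rather than citing it.
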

\begin{proof}
  Firstly, it follows from the definition of $S_t$ that $\ds_t \leq
  \norm{\vlam^t - \vlam^{t-1}}_1 = \abs{\alpha_t}$. Next, using
  \eqref{rate:dr:eqn} and \eqref{rate:step:eqn} we may write $\dr_t
  \geq \Upsilon(\delta_t)\abs{\alpha_t}$, where the function
  $\Upsilon$ has been defined in \citep{RatschWa05} as
  \[
  \Upsilon(x) =
  \frac{-\ln(1-x^2)}
  {\ln\enc{\frac{1+x}{1-x}}}.
  \]
  It is known~\citep{RatschWa05,RudinScDa07} that $\Upsilon(x) \geq
  x/2$ for $x\in[0,1]$. Combining and using
  Lemma~\ref{rate:B_edge:lem},
  \[
  \dr_t \geq \delta_t\ds_t/2 \geq R_{t-1}\enc{\ds_t/2S_{t-1}}.
  \]
  Rearranging completes the proof.
\end{proof}
Using this we may prove Lemma~\ref{rate:SR:lem}.

\begin{proof}
  We first show $S_0 \leq B^3R_0^{-2}$.  
Note, $S_0 \leq  \norm{\vlams-\vlam^0}_1 = B$, and by definition the
quantity $R_0= -\ln\enc{\frac{1}{m}\sum_ie^{-(\M\vlams)_i}}$.   
The quantity
  $(\M\vlams)_i$ is the inner product of row $i$ of matrix $\M$ with the
  vector $\vlams$. Since the entries of $\M$ lie in $[-1,+1]$, this is
  at most $\nlam = B$. Therefore $R_0 \leq
  -\ln\enc{\frac{1}{m}\sum_ie^{-B}} = B$, which is what we needed.

  To complete the proof, we show that $R_t^2S_t$ is non-increasing. It
  suffices to show for any $t$ the inequality $R_t^2S_t \leq
  R_{t-1}^2S_{t-1}$. This holds by the following chain:
  \begin{eqnarray*}
    R_t^2S_t &=& \enc{R_{t-1}-\dr_t}^2\enc{S_{t-1}+\ds_t} 
    = R_{t-1}^2S_{t-1}\enc{1-\frac{\dr_t}{R_{t-1}}}^2
    \enc{1+\frac{\ds_t}{S_{t-1}}} \\
    &\leq& R_{t-1}^2S_{t-1}\exp
    \enc{-\frac{2\dr_t}{R_{t-1}} + \frac{\ds_t}{S_{t-1}}}
    \leq R_{t-1}^2S_{t-1},
  \end{eqnarray*}
  where the first inequality follows from $e^x \geq 1+x$, and the
  second one from Lemma~\ref{rate:dsdr:lem}.
\end{proof}
}
\longv{
This completes the proof of Theorem~\ref{rate:B_rate:thm}. Although our
bound provides a rate polynomial in $B,\eps^{-1}$ as desired by the
conjecture in \citep{Schapire10}, the exponents are rather large, and
(we believe) not tight. One possible source of slack is the bound on
$S_t$ in Lemma~\ref{rate:SR:lem}. Qualitatively, the distance $S_t$ to
some solution having target loss should decrease with rounds, whereas
Lemma~\ref{rate:SR:lem} only says it does not increase too
fast. Improving this will directly lead to a faster convergence
rate. In particular, showing that $S_t$ never decreases would imply a
$B^2/\eps$ rate of convergence. Whether or not the monotonicity of
$S_t$ holds, we believe that the obtained rate bound is probably true,
and state it as a conjecture.
\begin{conjecture}
  \label{rate:B_rate:conj}
  For any $\vlams$ and $\eps>0$, AdaBoost converges to within
  $\lf(\vlams)+\eps$ loss in $O(B^2/\eps)$ rounds, where the order
  notation hides only absolute constants.
\end{conjecture}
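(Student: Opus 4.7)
The plan is to deduce the $O(B^2/\eps)$ rate by strengthening Lemma~\ref{rate:SR:lem} to a uniform bound $S_t \leq B$ for every round $t$ up to the target. Once this is established, Lemma~\ref{rate:B_edge:lem} immediately yields the strong edge lower bound $\delta_t \geq R_{t-1}/B$, and then Lemma~\ref{rate:edge_B_rate:lem} applied with $c_1 = c_2 = 1$ produces the desired rate of at most $2B^2(\eps\ln 2)^{-1} = O(B^2/\eps)$ rounds. Since $S_0 \leq \|\vlams - \vzero\|_1 = B$ trivially, it suffices to show that the sequence $\{S_t\}$ is non-increasing along AdaBoost's trajectory.

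My proposed route is to maintain a tracking sequence $\{\vlamd_t\}$ of target-loss-achieving vectors with $\vlamd_0 = \vlams$, each satisfying $\lf(\vlamd_t) \leq \lf(\vlams)$, updated along the same coordinate chosen by AdaBoost as $\vlamd_t = \vlamd_{t-1} + \alpha^\dagger_t \ve_{j_t}$, in such a way that $\|\vlam^t - \vlamd_t\|_1 \leq \|\vlam^{t-1} - \vlamd_{t-1}\|_1$. Writing $\beta_t$ for the $j_t$-th coordinate of $\vlam^{t-1} - \vlamd_{t-1}$, the contraction requirement forces $\alpha^\dagger_t$ to lie in the closed interval with endpoints $\alpha_t$ and $\alpha_t + 2\beta_t$, while the loss requirement, by convexity of $\lf$, admits its own interval of admissible step sizes that contains $0$. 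The conjecture thus reduces to showing these two intervals overlap at every round.

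The main obstacle is precisely this non-emptiness of intersection. AdaBoost selects $j_t$ and $\alpha_t$ from the weighted correlations at $\vlam^{t-1}$, and there is no a priori reason the target sublevel set, viewed from $\vlamd_{t-1}$, extends far enough along $\ve_{j_t}$ to reach $\alpha_t + 2\beta_t$. The natural tool to exploit is that the ratio $D_{\vlam^{t-1}}(i)/D_{\vlamd_{t-1}}(i)$ is controlled by $e^{2\|\vlam^{t-1} - \vlamd_{t-1}\|_1}$, so $\vlamd_{t-1}$ sees essentially the same maximum-correlation coordinate and a comparable edge along $\ve_{j_t}$ as AdaBoost does at $\vlam^{t-1}$; translating this correlation closeness into a quantitative interval-overlap statement via continuity of the exact-line-search minimizer is the delicate missing step.

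As a partial fallback, one could attempt to sharpen Lemma~\ref{rate:dsdr:lem} from $2\dr_t/R_{t-1} \geq \ds_t/S_{t-1}$ to $\dr_t/R_{t-1} \geq \ds_t/S_{t-1}$ by a higher-order expansion of $\Upsilon(\delta_t)$ near $\delta_t = 0$, upgrading the non-increasing invariant from $R_t^2 S_t$ to $R_t S_t$. This would improve the exponents in Theorem~\ref{rate:B_rate:thm}, but since it only yields $S_t \leq B^2/R_t$ rather than $S_t \leq B$, it falls short of the conjectured rate --- a strong indication that uniform boundedness of $S_t$, not merely a sharper trade-off between $S_t$ and $R_t$, is genuinely what the conjecture requires.
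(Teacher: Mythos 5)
There is a genuine gap, and you have in fact named it yourself: the entire argument hinges on showing that the tracking step $\alpha^\dagger_t$ can be chosen so that the contraction interval (endpoints $\alpha_t$ and $\alpha_t+2\beta_t$) meets the sublevel-set interval $\{\alpha^\dagger : \lf(\vlamd_{t-1}+\alpha^\dagger\ve_{j_t}) \leq \lf(\vlams)\}$, and no such overlap statement is proved or even sketched in a checkable form. The heuristic that $D_{\vlam^{t-1}}$ and $D_{\vlamd_{t-1}}$ are within a factor $e^{2\|\vlam^{t-1}-\vlamd_{t-1}\|_1}$ of one another is far too weak here, since that factor can be of order $e^{B}$, and closeness of correlations at the two base points says nothing about how far the target sublevel set extends from $\vlamd_{t-1}$ in the direction $\ve_{j_t}$ (the sublevel set can terminate on the wrong side of $0$ relative to $\alpha_t+2\beta_t$). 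So the proposal reduces the statement to an unproved claim that is essentially equivalent to the monotonicity of $S_t$ --- which is exactly the open issue: the statement you are asked about is stated in the paper as a \emph{conjecture}, the authors themselves observe that ``showing that $S_t$ never decreases would imply a $B^2/\eps$ rate,'' and they do not know how to do this for AdaBoost. Your fallback is also not viable as described: the factor $2$ in Lemma~\ref{rate:dsdr:lem} comes from $\Upsilon(x)\geq x/2$, and since $\Upsilon(x)/x \to 1/2$ as $x\to 0$, no higher-order expansion of $\Upsilon$ near $\delta_t=0$ can upgrade this to $\Upsilon(\delta_t)\geq\delta_t$; the constant $1/2$ is tight precisely in the regime that matters.

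For comparison, the paper's supporting evidence takes a different and more modest route: instead of controlling $S_t$ for AdaBoost, it modifies the algorithm. AdaBoost.\emph{S} multiplies the current combination by the scalar $s_t\in[0,1]$ minimizing the loss at the end of each round, and this scaling step is engineered exactly so that the term $-\sum_i D_{\vlam^{t-1}}(i)(\M\vlam^{t-1})_i$ in the relative-entropy bound of Lemma~\ref{rate:B_edge:lem} becomes non-positive, yielding $\delta_t \geq R_{t-1}/B$ directly (Lemma~\ref{rate:abs_edge:lem}) and hence a $3B^2/\eps$ bound via Lemma~\ref{rate:edge_B_rate:lem} with $c_1=c_2=1$ --- for the variant, not for AdaBoost itself. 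If you could make your interval-overlap step rigorous you would resolve the conjecture, but as it stands the proposal is a correct identification of the sufficient condition together with an acknowledged missing proof of it, not a proof.
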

As evidence supporting the conjecture, we show in the next section how
a minor modification to AdaBoost can achieve the above rate.

\subsection{Faster rates for a variant}
\label{rate:abs:sec}

In this section we introduce a new algorithm, \Abs, which will
enjoy the much faster rate of convergence mentioned in
Conjecture~\ref{rate:B_rate:conj}. \Abs is the same as AdaBoost,
except that at the end of each round, the current combination of weak
hypotheses is {\em scaled back}, that is, multiplied by a scalar in
$[0,1]$ if doing so will reduce the exponential loss further.  The
code is largely the same as in Section~\ref{rate:sec:adab},
maintaining a combination $\vlam^{t-1}$ of weak hypotheses, and
greedily choosing $\alpha_t$ and $\bh_{j_t}$ on each round to form a
new combination $\tilde{\vlam}^t=\vlam^{t-1}+\alpha_t \bh_{j_t}$.
 However, after creating the new combination $\tilde{\vlam}^t$,
the result is multiplied by the value $s_t$ in $[0,1]$ that causes the
greatest decrease in the exponential loss:
$s_t=\argmin_s\lf(s\tilde{\vlam}^t)$,
and $\vlam^t =
s_t\tilde{\vlam}^t$. Since
$\lf(s\tilde{\vlam}^t)$, as a function of $s$, is convex, its minimum
on $[0,1]$ can be found easily, for instance, using a simple binary
search.  The new distribution $D_{t+1}$ on the examples is constructed
using $\vlam^t$ as before; the weight $D_{t+1}(i)$ on example $i$ is
proportional to its exponential loss $D_{t+1}(i) \propto
e^{-(\M\vlam^t)_i}$. With this modification we may prove the following:
\begin{theorem}
  \label{rate:abs_rate:thm}
  For any $\vlams, \eps>0$, \Abs achieves at most
  $\lf(\vlams)+\eps$ loss within $3\norm{\vlams}_1^2/\eps$ rounds.
\end{theorem}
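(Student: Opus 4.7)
The plan is to combine the scaling optimality condition with convexity of $\lf$ to lower-bound the edge $\delta_{t+1}$ by the current suboptimality $\lf(\vlam^t)-\lf(\vlams)$, and then feed this bound into the standard loss-drop inequality \eqref{rate:drop:eqn} to derive a quadratic recursion that telescopes to the desired $O(\nlam^2/\eps)$ rate.

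The first key step will be to observe that the line search over $s\in[0,1]$ forces $\langle \nabla \lf(\vlam^t), \vlam^t\rangle \leq 0$. This follows by case analysis on $s_t$: if $s_t\in(0,1)$, the first-order condition gives $\langle \nabla\lf(\vlam^t),\tilde\vlam^t\rangle=0$; if $s_t=1$, the derivative $\frac{d}{ds}\lf(s\tilde\vlam^t)$ at $s=1$ is $\leq 0$ (otherwise decreasing $s$ would strictly reduce the loss); and if $s_t=0$ then $\vlam^t=\vzero$. Since $\vlam^t=s_t\tilde\vlam^t$ with $s_t\geq 0$, all three cases yield $\langle \nabla \lf(\vlam^t), \vlam^t\rangle \leq 0$. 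Invoking convexity and this sign information,
\[
\lf(\vlam^t)-\lf(\vlams) \;\leq\; \langle \nabla \lf(\vlam^t), \vlam^t-\vlams\rangle \;\leq\; -\langle \nabla \lf(\vlam^t), \vlams\rangle.
\]
Direct calculation gives $\nabla \lf(\vlam^t)_j = -\lf(\vlam^t)\,\E_{i\sim D_{t+1}}[M_{ij}]$, so H\"older's inequality together with the definition of $\delta_{t+1}$ as the maximum absolute correlation under $D_{t+1}$ bounds the right-hand side by $\lf(\vlam^t)\,\delta_{t+1}\,\nlam$. Rearranging,
\[
\delta_{t+1} \;\geq\; \frac{\lf(\vlam^t)-\lf(\vlams)}{\lf(\vlam^t)\,\nlam}.
\]

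Setting $\Delta_t := \lf(\vlam^t)-\lf(\vlams)$ and combining with the loss-drop $\lf(\vlam^{t+1}) \leq \lf(\vlam^t)\sqrt{1-\delta_{t+1}^2} \leq \lf(\vlam^t)(1-\delta_{t+1}^2/2)$ (valid since scaling can only further reduce the loss relative to the unscaled AdaBoost iterate),
\[
\Delta_t - \Delta_{t+1} \;\geq\; \tfrac{1}{2}\lf(\vlam^t)\,\delta_{t+1}^2 \;\geq\; \frac{\Delta_t^2}{2\,\lf(\vlam^t)\,\nlam^2} \;\geq\; \frac{\Delta_t^2}{2\,\nlam^2},
\]
where the last step uses $\lf(\vlam^t)\leq \lf(\vlam^0)=1$ (the loss is monotone and starts at $1$). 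Dividing by $\Delta_t\Delta_{t+1}$ and using $\Delta_{t+1}\leq \Delta_t$ yields $\Delta_{t+1}^{-1}-\Delta_t^{-1}\geq (2\nlam^2)^{-1}$, which telescopes to $\Delta_T \leq 2\nlam^2/T$. Hence $T \geq 3\nlam^2/\eps$ rounds suffice to drive $\Delta_T\leq\eps$.

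The main subtlety will be the sign argument showing $\langle \nabla \lf(\vlam^t), \vlam^t\rangle \leq 0$ --- this is precisely where the scaling modification pays off, since without it the term could be large and positive and thereby spoil the convexity-based edge bound (this is essentially the same obstacle that led to the extra factor $S_t/\nlam$ in the proof of Theorem~\ref{rate:B_rate:thm} for plain AdaBoost). Everything else is a textbook convexity-plus-recursion argument, and the quadratic dependence on $\nlam$ enters solely through the H\"older bound against $\vlams$.
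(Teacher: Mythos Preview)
Your proof is correct and rests on the same pivotal observation as the paper's: the scaling step forces $\langle \nabla \lf(\vlam^t),\vlam^t\rangle\le 0$, which in the paper's language is the statement that the term $-\sum_i D_{\vlam^{t-1}}(i)(\M\vlam^{t-1})_i$ is non-positive (this is exactly what Lemma~\ref{rate:abs_edge:lem} proves, and the paper remarks ``the scaling step was added just so that this last fact would be true''). The execution differs in framing: the paper works with the log-suboptimality $R_t=\ln \lf(\vlam^t)-\ln \lf(\vlams)$, obtains $\delta_t\ge R_{t-1}/B$ via the relative-entropy inequality \eqref{rate:re:eqn}, and then plugs into the general rate machinery of Lemma~\ref{rate:edge_B_rate:lem} (which in turn appeals to Lemma~\ref{rate:rec:lem}). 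You instead work directly with $\Delta_t=\lf(\vlam^t)-\lf(\vlams)$, use plain convexity of $\lf$ and H\"older to get $\delta_{t+1}\ge \Delta_t/(\lf(\vlam^t)\,\nlam)$, and telescope a quadratic recursion by hand. Your route is somewhat more elementary and self-contained, avoiding both auxiliary lemmas, and even yields the slightly sharper constant $2$ in place of $2/\ln 2$; but the two arguments are really the same proof expressed in two closely related coordinate systems (multiplicative versus additive suboptimality).
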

The proof is similar to that in the previous section. Reusing the same
notation, note that proof of Lemma~\ref{rate:edge_B_rate:lem} continues to hold
(with very minor modifications to that are straightforward). Next we can
exploit the changes in \Abs to show an improved version of
Lemma~\ref{rate:B_edge:lem}. Intuitively, scaling back has the effect
of preventing the weights on the weak hypotheses from becoming ``too
large'', and we may show
\begin{lemma}
  \label{rate:abs_edge:lem}
  In each round $t$, the edge satisfies $\delta_t \geq R_{t-1}/B$. 
\end{lemma}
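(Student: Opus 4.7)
The plan is to adapt the proof of Lemma~\ref{rate:B_edge:lem}, using the optimality of the scale-back factor $s_{t-1}$ from the previous round to replace the distance $S_{t-1}$ by the cleaner quantity $B=\norm{\vlams}_1$. Starting from the same non-negativity-of-relative-entropy calculation applied with the target combination $\vlam = \vlams$, one obtains
\[
R_{t-1} \;\leq\; \sum_{i=1}^m D_t(i)\enc{\M\vlams - \M\vlam^{t-1}}_i
\;=\; \sum_{i=1}^m D_t(i)(\M\vlams)_i \;-\; \sum_{i=1}^m D_t(i)(\M\vlam^{t-1})_i.
\]
The first term is bounded directly by the definition of the edge: expanding in the $j$-coordinates and using $\abs{\sum_i D_t(i) M_{ij}}\leq \delta_t$ together with H\"older's inequality gives $\sum_i D_t(i)(\M\vlams)_i \leq \delta_t \norm{\vlams}_1 = \delta_t B$.

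The heart of the argument is to show that the second term is non-negative, i.e., $\sum_i D_t(i)(\M\vlam^{t-1})_i \geq 0$; this is precisely the benefit of the scale-back step. Recall that $\vlam^{t-1} = s_{t-1}\tilde{\vlam}^{t-1}$ where $s_{t-1} = \argmin_{s\in[0,1]}\lf(s\tilde\vlam^{t-1})$ and $D_t(i) \propto e^{-(\M\vlam^{t-1})_i}$. Differentiating $\lf(s\tilde\vlam^{t-1})$ in $s$ and evaluating at $s=s_{t-1}$ yields, up to a positive normalization, $-\sum_i D_t(i)(\M\tilde\vlam^{t-1})_i$; since $s_{t-1}$ minimizes a convex function on $[0,1]$, this derivative is $\leq 0$, so $\sum_i D_t(i)(\M\tilde\vlam^{t-1})_i \geq 0$, and multiplying by $s_{t-1}\geq 0$ yields the desired inequality for $\vlam^{t-1}$. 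Combining the two bounds gives $R_{t-1} \leq \delta_t B$, which is the claim.

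The main obstacle is handling the boundary cases of the scaling step cleanly. When $s_{t-1}\in(0,1)$ the first-order condition gives exact orthogonality $\sum_i D_t(i)(\M\vlam^{t-1})_i = 0$, and when $s_{t-1}=0$ this sum is trivially $0$ because $\vlam^{t-1}=\vzero$. The only delicate case is $s_{t-1}=1$, for which the derivative at the right endpoint is merely non-positive rather than zero; but this flips in the favorable direction, yielding $\sum_i D_t(i)(\M\vlam^{t-1})_i\geq 0$, which is exactly what the inequality above requires. One must also initialize the induction at $t=1$, where $\vlam^0 = \vzero$ makes the orthogonality automatic.
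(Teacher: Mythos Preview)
Your proposal is correct and follows essentially the same approach as the paper: both start from the relative-entropy inequality with $\vlam=\vlams$, bound the $\vlams$-term by $\delta_t B$ via H\"older, and then use first-order optimality of the scale-back step to show $\sum_i D_t(i)(\M\vlam^{t-1})_i\geq 0$. Your explicit case split on $s_{t-1}\in\{0\}\cup(0,1)\cup\{1\}$ is a slightly cleaner version of the paper's convexity trichotomy for $G(s)=\lf(s\tilde\vlam^{t-1})$, but the underlying idea is identical.
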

\begin{proof}
  We will reuse parts of the proof of
  Lemma~\ref{rate:B_edge:lem}. Setting $\vlam = \vlams$ in
  \eqref{rate:re:eqn} we may write
  \[
  R_t \leq \sum_{i=1}^mD_{\vlam^{t-1}}(i)\enc{\M\vlams}_i
  +\sum_{i=1}^m-D_{\vlam^{t-1}}(i)\enc{\M\vlam^{t-1}}_i.
  \]
  The first summation can be upper bounded as in
  \eqref{rate:re_ineq:eqn} by $\delta_t\norm{\vlams} = \delta_t B$. We
  will next show that the second summation is non-positive, which will
  complete the proof. The scaling step was added just so that this
  last fact would be true.

  If we define $G:[0,1]\to\R$ to be $G(s) =
  \lf\enc{s\tilde{\vlam^{t}}} = \sum_i e^{-(\M\tilde{\vlam^t})_8}$,
  then observe that the scaled derivative $G'(s)/G(s)$ is exactly
  equal to the second summation. Since $G(s) \geq 0$, it suffices to
  show the derivative $G'(s) \leq 0$ at the optimum value of $s$,
  denoted by $s^*$. Since $G$ is a strictly convex function ($\forall
  s: G''(s) > 0$), it is either strictly increasing or strictly
  decreasing throughout $[0,1]$, or it has a local minima. In the case
  when it
  is strictly decreasing throughout, then $G'(s)\leq 0$ everywhere,
  whereas if $G$ has a local minima, then $G'(s) = 0$ at $s^*$. We finish
  the proof by showing that $G$ cannot be strictly increasing
  througout $[0,1]$. If it were, we would have $\lf(\tilde{\vlam}^t) =
  G(1) > G(0) = 1$, an impossibility since the loss decreases through
  rounds.
\end{proof}
Lemmas~\ref{rate:edge_B_rate:lem} and \ref{rate:abs_edge:lem} together
now imply Theorem~\ref{rate:abs_rate:thm}, where we used that $2\ln 2
< 3$.

In experiments we ran, the scaling back never occurs. For such
datasets, AdaBoost and \Abs are identical. We believe that even for
contrived examples, the rescaling could happen only a few times,
implying that both AdaBoost and \Abs would enjoy the convergence rates
of Theorem~\ref{rate:abs_rate:thm}. In the next section, we construct
rate lower bound examples to show that this is nearly the best rate one can
hope to show.
}
\old{  
\subsection {Lower-bounds}
\label{rate:B_lbnd:sec}
Here we show that the dependence of the rate in
Theorem~\ref{rate:B_rate:thm} on the norm $\nlam$ of the solution
achieving target accuracy is necessary for a wide class of
datasets. The arguments in this section are not tailored to AdaBoost,
but hold more generally for any coordinate descent algorithm, and can
be readily generalized to any loss function $L'$ of the form
$L'(\vlam) = (1/m)\sum_{i}\phi(\M\vlam)$, where $\phi:\R\to\R$ is any
non-decreasing function. The first lemma connects the size of a
reference solution to the required number of rounds of boosting, and
shows that for a wide variety of datasets the convergence rate to a
target loss can be lower bounded by the $\ell_1$-norm of the smallest
solution achieving that loss.
\begin{lemma}
  \label{rate:B_lbnd:lem}
  Suppose the feature matrix $\M$ corresponding to a dataset has two
  rows with $\set{-1,+1}$ entries which are complements of each other,
  i.e., there are two examples on which any hypothesis gets one wrong
  and one correct prediction.  Then the number of rounds required to
  achieve a target loss $L^*$ is at least $\inf \set{\norm{\vlam}_1:
    \lf(\vlam) \leq L^*}/(2\ln m)$.
\end{lemma}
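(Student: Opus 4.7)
The plan is to show that if $\vlam^t$ is the parameter vector produced by the coordinate descent algorithm after $t$ rounds and it attains loss at most $L^*$, then $\elone{\vlam^t} \leq 2t\ln m$. Since any such $\vlam^t$ is a feasible competitor for the infimum on the right-hand side, this would immediately give $t \geq \inf\{\elone{\vlam} : \lf(\vlam)\leq L^*\}/(2\ln m)$, as desired.

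The key observation is that the two complementary rows, say rows $i_1$ and $i_2$, satisfy $M_{i_1 j} = -M_{i_2 j}$ for every $j$, which gives $(\M\vlam)_{i_1} = -(\M\vlam)_{i_2}$ for every $\vlam$. Consequently the loss is always bounded below by
\[
\lf(\vlam) \geq \frac{1}{m}\paren{e^{-(\M\vlam)_{i_1}} + e^{(\M\vlam)_{i_1}}} \geq \frac{1}{m}\,e^{\abs{(\M\vlam)_{i_1}}}.
\]
First I would use this together with $\lf(\vlam^t) \leq \lf(\vlam^0) = 1$ (which holds because coordinate descent monotonically decreases the loss and $\lf(\vzero)=1$) to conclude that $\abs{(\M\vlam^t)_{i_1}} \leq \ln m$ after every round $t \geq 0$.

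Next I would control the per-round step size. In round $t$ a single coordinate $j_t$ is updated by an amount $\alpha_t$, so
\[
(\M\vlam^t)_{i_1} - (\M\vlam^{t-1})_{i_1} = \alpha_t M_{i_1 j_t} = \pm\,\alpha_t,
\]
where the last equality uses that row $i_1$ has entries in $\{-1,+1\}$. Combining this with the bound $\abs{(\M\vlam^\tau)_{i_1}} \leq \ln m$ at both $\tau = t-1$ and $\tau = t$ yields $\abs{\alpha_t} \leq 2\ln m$ by the triangle inequality. Summing over the first $t$ rounds gives $\elone{\vlam^t} \leq \sum_{s=1}^t \abs{\alpha_s} \leq 2t\ln m$, completing the argument.

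There is no real obstacle; the only subtlety is that the argument never uses AdaBoost's specific step-size rule, only monotonicity of the loss, which is why the statement applies to any coordinate descent algorithm and extends verbatim to losses of the form $(1/m)\sum_i \phi((\M\vlam)_i)$ with $\phi$ non-decreasing (after replacing $\ln m$ by the appropriate threshold determined by $\phi^{-1}(m\cdot \phi(0))$). The boundary case $L^* \geq 1$ is trivial because then the infimum is zero.
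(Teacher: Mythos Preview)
Your proof is correct and follows essentially the same argument as the paper: bound the absolute margin on the complementary pair by $\ln m$ via the loss monotonicity $\lf(\vlam^t)\leq 1$, then use that row $i_1$ has $\pm 1$ entries to deduce $\abs{\alpha_t}\leq 2\ln m$ from the triangle inequality on consecutive margins. The only cosmetic difference is that you isolate the lower bound $\lf(\vlam)\geq \tfrac{1}{m}e^{\abs{(\M\vlam)_{i_1}}}$ explicitly, whereas the paper phrases the same step as ``one margin would be below $-\ln m$, forcing the loss above $1$.''
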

\begin{proof}
  We first show that the two examples corresponding to the
  complementary rows in $\M$ both satisfy a certain margin boundedness
  property. Since each hypothesis predicts oppositely on these, in any
  round $t$ their margins will be of equal magnitude and opposite
  sign. Unless both margins lie in $[-\ln m, \ln m]$, one of them will
  be smaller than $-\ln m$. But then the exponential loss
  $\lf(\vlam^t) = (1/m)\sum_j e^{-(\M\vlam^t)_j}$ in that round will
  exceed $1$, a contradiction since the losses are non-increasing
  through rounds, and the loss at the start was $1$. Thus, assigning
  one of these examples the index $i$, we have the absolute margin
  $\abs{(\M\vlam^{t})_i}$ is bounded by $\ln m$ in any round
  $t$. Letting $\M(i)$ denote the $i$th row of $\M$, the step length
  $\alpha_t$ in round $t$ therefore satisfies
  \[
  \abs{\alpha_t} = \abs{M_{ij_t}\alpha_t}
  = \abs{\dotp{\M(i)}{\alpha_t\ve_{j_t}}}
  = \abs{(\M\vlam^t)_i - (\M\vlam^{t-1})_i}
  \leq \abs{(\M\vlam^t)_i} + \abs{(\M\vlam^{t-1})_i} \leq 2\ln m,
  \]
  and the statement of the lemma directly follows.
\end{proof}
}
\longv{
When the weak hypotheses are \emph{abstaining}~\citep{SchapireSi99},
it can make a definitive prediction that the label is $-1$ or $+1$, or
it can ``abstain'' by predicting zero. No other levels of confidence
are allowed, and the resulting feature matrix has entries in
$\set{-1,0,+1}$. The next theorem constructs a feature matrix
satisfying the properties of Lemma~\ref{rate:B_lbnd:lem} and where
additionally the smallest size of a solution achieving $L^*+\eps$ loss
is at least $\Omega(2^m)\ln(1/\eps)$, for some fixed $L^*$ and every
$\eps > 0$.
\begin{theorem}
  \label{rate:B_big:thm} 
  Consider the following matrix $\M$ with $m$ rows (or examples)
  labeled $0,\ldots,m-1$ and $m-1$ columns labeled $1,\ldots,m-1$
  (assume $m\geq 3$). The square sub-matrix ignoring row zero is an
  upper triangular matrix, with $1$'s on the diagonal, $-1$'s above
  the diagonal, and $0$ below the diagonal. Therefore row 1 is
  $(+1,-1,-1,\ldots,-1)$. Row 0
  is defined to be just the complement of row 1. Then, for any
  $\eps>0$, a loss of $2/m + \eps$ is achievable on this dataset, but
  with large norms
  \[
  \inf\set{\norm{\vlam}_1: \lf(\vlam) \leq 2/m  + \eps} \geq
  (2^{m-2}-1)\ln(1/(3\eps)).
  \]
  Therefore, by Lemma~\ref{rate:B_lbnd:lem}, the minimum number of
  rounds required for reaching loss at most $2/m+\eps$ is at least
  $\enc{\frac{2^{m-2}-1}{2\ln m}}\ln(1/(3\eps))$.
\end{theorem}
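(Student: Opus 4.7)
The plan is to prove the two halves (achievability of the loss and the norm lower bound) separately, both by exploiting the explicit triangular structure of $\M$.

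For the upper bound, I first note that for any $\vlam$ the margin on row $k \in \{1,\ldots,m-1\}$ is $(\M\vlam)_k = \lambda_k - \lambda_{k+1} - \cdots - \lambda_{m-1}$, and $(\M\vlam)_0 = -(\M\vlam)_1$ by complementarity, so the contribution of rows $0$ and $1$ is always $e^x + e^{-x} \geq 2$ where $x = (\M\vlam)_1$, with equality iff $x=0$. I would construct $\vlam$ to zero that pair and make all remaining margins equal to a common $A$: set $\lambda_{m-1}=A$, then recursively $\lambda_k = A + \lambda_{k+1} + \cdots + \lambda_{m-1}$ for $k = m-2, \ldots, 2$ (which yields $\lambda_k = 2^{m-1-k}A$), and finally $\lambda_1 = \lambda_2 + \cdots + \lambda_{m-1}$ to zero the margins of rows $0,1$. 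The resulting loss is $\frac{2}{m} + \frac{m-2}{m}e^{-A}$, so choosing $A = \ln\frac{m-2}{m\eps}$ gives exactly $2/m + \eps$.

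For the lower bound, I take any $\vlam$ with $\lf(\vlam) \leq 2/m + \eps$ and again write $x = (\M\vlam)_1$. Then
\[
e^x + e^{-x} + \sum_{k=2}^{m-1} e^{-(\M\vlam)_k} \;\leq\; 2 + m\eps.
\]
Since $e^x + e^{-x} \geq 2$ and each remaining term is non-negative, each of these remaining terms is individually at most $m\eps$, so $(\M\vlam)_k \geq \ln(1/(m\eps)) =: A$ for every $k \geq 2$. Now comes the key recursive step: letting $T_k = \sum_{j\geq k}\lambda_j$, the margin constraints rewrite as $T_k \geq A + 2T_{k+1}$. Unwinding from $T_m = 0$ yields $T_k \geq (2^{m-k}-1)A$, and simultaneously shows by top-down induction that every $\lambda_k$ with $k \geq 2$ is non-negative, so $\sum_{k\geq 2}|\lambda_k| = T_2 \geq (2^{m-2}-1)A$. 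Combined with $\lambda_1 = x + T_2$, this yields $\|\vlam\|_1 \geq 2T_2 - |x|$.

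The final piece is to absorb the $|x|$ correction: the constraint $e^x + e^{-x} \leq 2 + m\eps$ forces $|x|$ to be on the order of $\sqrt{m\eps}$, which is utterly dominated by the exponential main term $2(2^{m-2}-1)\ln(1/(m\eps))$, so a routine comparison gives the stated weaker bound $(2^{m-2}-1)\ln(1/(3\eps))$. The round-count consequence follows directly by plugging this norm lower bound into Lemma~\ref{rate:B_lbnd:lem} (whose hypothesis is met since rows $0$ and $1$ are complementary $\pm 1$-vectors). The main obstacle is the recursive step producing the $2^{m-2}$ factor: one must verify that the inductive positivity of the $\lambda_k$'s holds even though nothing a priori bounds their signs, so that the $\ell_1$ norm collapses to the telescoping sum $T_2$ rather than suffering cancellations that would weaken the bound.
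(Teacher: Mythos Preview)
Your approach matches the paper's: both exploit the triangular recursion $(\M\vlam)_k=\lambda_k-\sum_{j>k}\lambda_j$ to force $\lambda_k\gtrsim 2^{m-1-k}A$ once a margin lower bound $A$ on rows $2,\ldots,m-1$ is extracted from the complementarity of rows $0$ and $1$, and the achieving vector is built identically. The one difference is that the paper bounds $\|\vlam\|_1$ simply by $\sum_{k\ge 2}\lambda_k$ and never touches $\lambda_1$, which sidesteps your $|x|$-absorption step; your inclusion of $|\lambda_1|$ buys a factor of two but then forces the ``routine comparison'' at the end, an unnecessary detour that is actually a bit delicate for $\eps$ near the top of the nontrivial range.
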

A picture of the matrix constructed in the above lemma for $m=5$ is
shown in Figure~\ref{rate:matrix:fig}.
\begin{figure}
  \centering
  \[
  \left (
  \begin{array}{ccccc}
    - & + & + & + & + \\
    + & - & - & - & - \\
    0 & + & - & - & - \\
    0 & 0 & + & - & - \\
    0 & 0 & 0 & + & - \\
    0 & 0 & 0 & 0 & +
  \end{array}
  \right )
  \]
  \caption{The matrix used in Theorem~\ref{rate:B_big:thm} when $m=5$.}
  \label{rate:matrix:fig}
\end{figure}
Theorem~\ref{rate:B_big:thm} shows that when $\eps$ is a small constant
(say $\eps = 0.01$), and $\vlams$ is some vector with loss $L^* +
\eps/2$, AdaBoost takes 
at least $\Omega(2^m/\ln m)$ steps to get within $\eps/2$ of the loss
achieved by $\vlams$, that is, to within $L^* + \eps$ loss.
Since $m$ and $\eps$ are independent quantities, this shows that a
polynomial dependence on the norm of 
the reference solution is unavoidable, and this norm might be
exponential in the number of training examples in the worst case.
\begin{corollary}
  \label{rate:B_big:cor}
  Consider feature matrices containing only $\set{-1,0,+1}$ entries.
  If, for some constants $c$ and $\beta$, the bound in
  Theorem~\ref{rate:B_rate:thm} can be replaced by
  $O\enc{\nlam^c\eps^{-\beta}}$ for all such matrices, then $c\geq
  1$. Further, for such matrices, the bound $\poly(1/\eps, \nlam)$ in
  Theorem~\ref{rate:B_rate:thm} cannot be replaced by
  $\poly(1/\eps,m,N)$.
\end{corollary}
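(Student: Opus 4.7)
The plan is to derive both claims directly from the lower bound of Theorem~\ref{rate:B_big:thm}. That theorem exhibits, for each $m\geq 3$, a feature matrix with entries in $\{-1,0,+1\}$ using only $N=m-1$ weak hypotheses on which any combination achieving loss at most $2/m + \eps'$ has $\ell_1$-norm at least $(2^{m-2}-1)\ln(1/(3\eps'))$, and (via Lemma~\ref{rate:B_lbnd:lem}) any boosting algorithm requires at least $((2^{m-2}-1)/(2\ln m))\ln(1/(3\eps'))$ rounds to reach that loss. These two bounds---one on the reference norm and one on the round count---will be pitted against one another for part~1, and against $m$ and $N$ for part~2.

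For part~1, fix any $\eps>0$ and any large $m$. Choose a reference vector $\vlams$ of loss $2/m + \eps$ whose norm $B=\nlam$ is within a constant factor of the infimum above. The target loss $\lf(\vlams)+\eps$ is then $2/m + 2\eps$, so applying the round lower bound with $\eps'=2\eps$ gives $T \geq ((2^{m-2}-1)/(2\ln m))\ln(1/(6\eps))$, while $B = \Theta\!\bigl((2^{m-2}-1)\ln(1/(3\eps))\bigr)$. A hypothetical bound $T \leq C\,B^c\eps^{-\beta}$ would rearrange to $(2^{m-2}-1)^{1-c} \leq C_\eps \ln m$ for a constant $C_\eps$ depending only on $\eps,c,\beta$. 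The left side grows like $2^{m(1-c)}$ while the right side is merely logarithmic in $m$, which forces $c\geq 1$ in the limit $m\to\infty$.

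For part~2, fix any $\eps>0$ and let $m$ grow in the same construction. The lower bound specializes to $T = \Omega(2^m/\ln m)$, which is super-polynomial in $m$. Since $N = m-1$, any bound of the form $\poly(1/\eps,m,N)$ would force $T$ to be polynomial in $m$ for fixed $\eps$, contradicting the exponential lower bound for sufficiently large $m$.

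The heavy lifting is all contained in Theorem~\ref{rate:B_big:thm}, so no serious obstacle remains. The only mildly delicate point is ensuring that a reference $\vlams$ of prescribed norm actually exists: the infimum in Theorem~\ref{rate:B_big:thm} need not be attained, but a $\vlams$ whose norm exceeds the infimum by at most a constant factor suffices (one can alternatively take $\vlams$ with loss $2/m + \eps/2$ and target $\lf(\vlams)+\eps \leq 2/m + (3/2)\eps$), changing the relevant lower bounds on $T$ and $B$ only by constant factors and leaving the asymptotic arguments above intact.
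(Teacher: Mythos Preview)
Your proposal is correct and follows the same line as the paper. The paper does not give a separate formal proof of the corollary; it only supplies the short paragraph immediately preceding the statement, which fixes a small constant $\eps$, takes $\vlams$ with loss $2/m+\eps/2$, and observes from Theorem~\ref{rate:B_big:thm} that reaching loss $2/m+\eps$ already requires $\Omega(2^m/\ln m)$ rounds while $N=m-1$. Your argument is a more explicit version of exactly this: you fix $\eps$, let $m\to\infty$, and compare the $\Omega(2^m/\ln m)$ round lower bound against $B=\Theta(2^m)$ (forcing $c\ge 1$) and against $m,N$ (ruling out $\poly(1/\eps,m,N)$). One minor remark: your caveat about the infimum in Theorem~\ref{rate:B_big:thm} possibly not being attained is unnecessary---the sublevel set $\{\vlam:L(\vlam)\le 2/m+\eps\}$ is closed and the $\ell_1$-ball is compact, so the minimum-norm solution exists; moreover, the proof of Theorem~\ref{rate:B_big:thm} exhibits an explicit $\vlams$ whose norm is within a factor of two of the stated lower bound, which is all you need.
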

We now prove Theorem~\ref{rate:B_big:thm}.
\old{
\begin{proof}{\bf of Lemma~\ref{rate:B_big:thm}}.
  We first lower bound the norm of solutions achieving loss at most
  $2/m + \eps$. Observe that since rows 0 and 1 are complementary,
  any solution's loss on just examples 0 and 1 will add up to at least
  $2/m$. Therefore, to get within $2/m+\eps$, the margins on
  examples $2,\ldots,m-1$ should be at least $\ln
  \enc{\enc{m-2}/\enc{m\eps}} \geq \ln(1/(3\eps))$ (for $m\geq
  3$).
  Now, the feature matrix is designed so that the margins due to a
  combination $\vlam$ satisfy the following recursive relationships:
  \begin{eqnarray*}
    (M\vlam)_{m-1} &=& \lam_{m-1}, \\
    (M\vlam)_{i} &=& \lam_{i} - \enc{\lam_{i+1} + \ldots +
      \lam_{m-1}}, \mbox { for } 1 \leq i \leq m-2.
  \end{eqnarray*}
  Therefore, the margin on example $m-1$ is at least $\ln(1/(3\eps))$
  implies $\lam_{m-1} \geq \ln(1/(3\eps))$. 
  Similarly,  $\lam_{m-2} \geq  \ln(1/(3\eps)) + \lam_{m-1} \geq 
  2\ln(1/(3\eps))$. 
  Continuing this way, 
  \[
  \lam_i \geq
  \ln\enc{\frac{1}{3\eps}} + \lam_{i+1} + \ldots + \lam_{m-1} \geq
  \ln\enc{\frac{1}{3\eps}}\enct{1 + 2^{(m-1)-(i+1)} + \ldots + 2^0} =
  \ln\enc{\frac{1}{3\eps}}2^{m-1-i},
  \] 
  for $i=m-1,\ldots,2$. Hence $ \norm{\vlam}_1 \geq
  \ln(1/(3\eps))(1+2+\ldots+2^{m-3}) = (2^{m-2}-1)\ln(1/(3\eps))$. 

  We end by showing that a loss of at most $2/m+\eps$ is
  achievable.
  The above argument implies that if $\lam_{i} =
  2^{m-1-i}$ for $i=2,\ldots, m-1$, then examples $2,\ldots,m-1$
  attain margin exactly $1$. If we choose $\lam_1 = \lam_2 + \ldots +
  \lam_{m-1} = 2^{m-3} + \ldots + 1 = 2^{m-2}-1$, then the recursive
  relationship implies a zero margin on example 1 (and hence example 0).
  Therefore the combination $\ln(1/\eps)(2^{m-2}-1,2^{m-3},2^{m-4},\ldots,1)$
  achieves a loss $(2+(m-2)\eps)/m \leq 2/m + \eps$, for any $\eps>0$.
\end{proof}
}
We finally show that if the weak hypotheses are confidence-rated with
arbitrary levels of confidence, so that the feature matrix is allowed
to have non-integral entries in $[-1,+1]$, then the minimum norm of a
solution achieving a fixed accuracy can be arbitrarily large. Our
constructions will satisfy the requirements of
Lemma~\ref{rate:B_lbnd:lem}, so that the norm lower bound translates
into a rate lower bound.
\begin{figure}
  \centering
  \[
  \left (
    \begin{array}{ll}
      -1 & +1 \\
      +1 & -1 \\
      -1+\nu & +1 \\
      +1 & -1+\nu
      \end{array}
  \right )
  \]
  \caption{A picture of the matrix used in
    Theorem~\ref{rate:B_nonint_big:thm}.}
  \label{rate:matrix_nonint:fig}
\end{figure}
\begin{theorem}
  \label{rate:B_nonint_big:thm}
  Let $\nu > 0$ be an arbitrary number, and let $\M$ be the (possibly)
  non-integral matrix with 4 examples and 2 weak hypotheses shown in
  Figure~\ref{rate:matrix_nonint:fig}. Then for any $\eps>0$, a loss
  of $1/2+\eps$ is achievable on this dataset, but with large norms
  \[
  \inf\set{\norm{\vlam}_1: \lf(\vlam) \leq 1/2 + \eps} \geq
  2\ln(1/(2\eps))\nu^{-1}.
  \]
  Therefore, by Lemma~\ref{rate:B_lbnd:lem}, the number of rounds
  required to achieve loss at most $1/2 + \eps$ is at least
  $\ln(1/(2\eps))\nu^{-1}/\ln(m)$.
\end{theorem}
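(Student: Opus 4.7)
The plan is to compute the loss explicitly for the $4 \times 2$ matrix in Figure~\ref{rate:matrix_nonint:fig}, then split it into the contribution of the complementary pair (rows 1 and 2) and the contribution of the two perturbed rows (rows 3 and 4). Writing $\vlam = (\lam_1, \lam_2)$ and letting $u = \lam_1 - \lam_2$, the four terms in the loss take the form
\[
4\lf(\vlam) \;=\; e^{u} + e^{-u} + e^{u - \nu\lam_1} + e^{-u - \nu\lam_2}.
\]
The first two terms sum to at least $2$ (with equality only at $u = 0$), which already accounts for the ``free'' $1/2$ in the target loss.

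Next I would apply AM-GM to the remaining two terms:
\[
e^{u - \nu\lam_1} + e^{-u - \nu\lam_2} \;\geq\; 2\sqrt{e^{-\nu(\lam_1 + \lam_2)}} \;=\; 2 e^{-\nu(\lam_1+\lam_2)/2}.
\]
Combining gives $\lf(\vlam) \geq \tfrac{1}{2} + \tfrac{1}{2} e^{-\nu(\lam_1+\lam_2)/2}$. Imposing $\lf(\vlam) \leq 1/2 + \eps$ then yields $\lam_1 + \lam_2 \geq 2\ln(1/(2\eps))/\nu$. Since this quantity is positive for $\eps < 1/2$, we obtain $\norm{\vlam}_1 \geq |\lam_1+\lam_2| \geq 2\ln(1/(2\eps))/\nu$, giving the norm lower bound. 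For achievability, the symmetric choice $\lam_1 = \lam_2 = \ln(1/(2\eps))/\nu$ (or slightly larger) makes $u = 0$ and the perturbed terms each equal to $2\eps$, yielding loss exactly $1/2 + \eps$, which shows both that the target is achievable and that the AM-GM bound is tight.

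The final step is to invoke Lemma~\ref{rate:B_lbnd:lem}: rows 1 and 2 of the matrix are exact complements in $\set{-1,+1}$, so the lemma applies with $m = 4$, and the rounds-to-loss-$1/2+\eps$ lower bound is $2\ln(1/(2\eps))\nu^{-1}/(2\ln 4) = \ln(1/(2\eps))\nu^{-1}/\ln m$, as claimed.

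The argument is largely mechanical once the decomposition is in place; the only subtle point is ensuring that the AM-GM step gives the exact constant $2\ln(1/(2\eps))\nu^{-1}$ matching the achievable upper bound, and that signed $\lam_i$'s cannot beat this bound. Both are handled because $|\lam_1| + |\lam_2| \geq |\lam_1 + \lam_2|$ and our derived inequality on $\lam_1+\lam_2$ is sign-free, having come from AM-GM on positive quantities.
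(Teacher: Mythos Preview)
Your argument is correct and follows essentially the same decomposition as the paper: split the loss into the contribution of the complementary pair (rows 1--2), which is at least $1/2$, and the two perturbed rows, whose combined contribution must then be at most $\eps$; from this deduce that $\lam_1+\lam_2$ is forced to be large, and finish with $\norm{\vlam}_1 \geq \lam_1+\lam_2$ and Lemma~\ref{rate:B_lbnd:lem}. The only technical difference is that the paper bounds each of the two perturbed loss terms individually (claiming each margin is at least $\ln(1/(2\eps))$, which as written loses a factor of $2$ and should really give $\ln(1/(4\eps))$), then adds the two margin inequalities; your single AM--GM step on the pair $e^{u-\nu\lam_1}+e^{-u-\nu\lam_2}$ is cleaner and recovers the exact constant $2\nu^{-1}\ln(1/(2\eps))$ stated in the theorem.
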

\begin{proof}
  We first show a loss of $1/2+\eps$ is achievable. Observe that the
  vector $\vlam = (c,c)$, with $c=\nu^{-1}\ln(1/(2\eps))$, achieves
  margins $0,0,\ln(1/(2\eps)),\ln(1/(2\eps))$ on examples $1,2,3,4$,
  respectively. Therefore $\vlam$ achieves loss $1/2+\eps$. We next
  show a lower bound on the norm of a solution achieving this
  loss. Observe that since the first two rows are complementary, the
  loss due to just the first two examples is at least
  $1/2$. Therefore, any solution $\vlam = (\lam_1,\lam_2)$ achieving
  at most $1/2+\eps$ loss overall must achieve a margin of at least
  $\ln(1/(2\eps))$ on both the third and fourth examples. By
  inspecting the two columns, this implies
  \begin{eqnarray*}
    \lam_1 - \lam_2 + \lam_2\nu &\geq& \ln\enc{1/(2\eps)} \\
    \lam_2 - \lam_1 + \lam_1\nu &\geq& \ln\enc{1/(2\eps)}.
  \end{eqnarray*}
  Adding the two equations we find
  \[
  \nu(\lam_1 + \lam_2) \geq 2\ln\enc{1/(2\eps)}
  \implies
  \lam_1 + \lam_2 \geq 2\nu^{-1}\ln\enc{1/(2\eps)}.
  \]
  By the triangle inequality, $\norm{\vlam}_1 \geq \lam_1 + \lam_2$,
  and the lemma follows.
\end{proof}
Note that if $\nu=0$, then the optimal solution is found in zero
rounds of boosting and has optimal loss $1$. However, even the tiniest
perturbation $\nu > 0$ causes the optimal loss to fall to $1/2$, and
causes the rate of convergence to increase drastically. In fact, by
Theorem~\ref{rate:B_nonint_big:thm}, the number of rounds required to
achieve any fixed loss below $1$ grows as $\Omega(1/\nu)$, which is
arbitrarily large when $\nu$ is infinitesimal. We may conclude that
with non-integral feature matrices, the dependence of the rate on the
norm of a reference solution is absolutely necessary.
\begin{corollary}
  \label{rate:B_nonint_big:cor} When using confidence rated
  weak-hypotheses with arbitrary confidence levels, the bound
  $\poly(1/\eps,\nlam)$ in Theorem~\ref{rate:B_rate:thm} cannot be
  replaced by any function of purely $m$, $N$ and $\eps$ alone.
\end{corollary}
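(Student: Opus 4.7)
The plan is to deduce this corollary directly from Theorem~\ref{rate:B_nonint_big:thm} by observing that the construction there has $m=4$ and $N=2$ both fixed, so the only quantity driving up the round lower bound is the perturbation parameter $\nu$, which is not among $m, N, \eps$. I would proceed by contradiction: suppose there were a function $f(m,N,\eps)$ such that for every dataset with confidence-rated weak hypotheses and every $\vlams$, AdaBoost attains loss $\lf(\vlams)+\eps$ in at most $f(m,N,\eps)$ rounds.

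Then, for any fixed $\eps>0$, I would apply this assumed bound to the specific family of $4\times 2$ feature matrices from Figure~\ref{rate:matrix_nonint:fig}, parametrized by $\nu>0$. Each such matrix has $m=4$ and $N=2$, so the assumed upper bound would be $f(4,2,\eps)$, a single finite number independent of $\nu$. On the other hand, Theorem~\ref{rate:B_nonint_big:thm} tells us that to reach loss $1/2+\eps/2$ (which is at most $\lf(\vlams)+\eps$ for the explicit $\vlams$ exhibited in that theorem's proof), AdaBoost must take at least $\ln(1/\eps)\nu^{-1}/\ln 4$ rounds. Taking $\nu$ sufficiently small makes this lower bound exceed $f(4,2,\eps)$, yielding the desired contradiction.

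The only subtlety is to be careful that the reference vector $\vlams$ used in the lower bound has loss close to the $1/2+\eps$ level, so that the AdaBoost guarantee of being within $\eps$ of $\lf(\vlams)$ truly forces the loss below the threshold where the norm lower bound kicks in. This is handled by replacing $\eps$ with $\eps/2$ throughout: pick $\vlams$ with loss $1/2+\eps/2$ (which exists by the achievability half of Theorem~\ref{rate:B_nonint_big:thm}), note that $\lf(\vlams)+\eps/2 < 1/2+\eps$, and then invoke the norm lower bound for loss $1/2+\eps$ to get at least $\Omega(\nu^{-1}\ln(1/\eps))$ rounds. I do not anticipate any real obstacle here; the entire content is packaged in the preceding theorem, and the corollary is a short quantifier-swap argument exploiting that $\nu$ is an extra degree of freedom invisible to $m$, $N$, and $\eps$.
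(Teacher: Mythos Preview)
Your proposal is correct and follows the same approach as the paper, which does not give a formal proof but simply notes in the discussion preceding the corollary that for the family of $4\times 2$ matrices in Theorem~\ref{rate:B_nonint_big:thm}, the number of rounds to reach any fixed target loss grows like $\Omega(1/\nu)$ while $m$, $N$, and $\eps$ remain fixed. Your contradiction argument is just a careful unpacking of that remark; the minor inequality slip in your second paragraph (the direction of ``at most'' is reversed) is corrected in your third paragraph, so the argument goes through.
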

The construction in Figure~\ref{rate:matrix_nonint:fig} can be
generalized to produce datasets with any number of examples that
suffer the same poor rate of convergence as the one in
Theorem~\ref{rate:B_nonint_big:thm}. We discussed the smallest such
construction, since we feel that it best highlights the drastic effect
non-integrality can have on the rate.

In this section we saw how the norm of the reference solution is an
important parameter for bounding the convergence rate. In the next
section we investigate the optimal dependence of the rate on the
parameter $\eps$ and show that $\Omega(1/\eps)$ rounds are necessary
in the worst case.
}
\longv{
\section{Second convergence rate: Convergence to optimal loss}
\label{rate:eps:sec}

In the previous section, our rate bound depended on both the
approximation parameter $\eps$, as well as the size of the smallest
solution achieving the target loss. For many datasets, the optimal
target loss $\inf_{\vlam}\lf(\vlam)$ cannot be realized by any finite
solution. In such cases, if we want to bound the number of rounds
needed to achieve within $\eps$ of the optimal loss, the only way to
use Theorem~\ref{rate:B_rate:thm} is to first decompose the accuracy
parameter $\eps$ into two parts $\eps = \eps_1 + \eps_2$, find some
finite solution $\vlams$ achieving within $\eps_1$ of the optimal
loss, and then use the bound $\poly(1/\eps_2,\nlam)$ to achieve at
most $\lf(\vlams) + \eps_2 = \inf_{\vlam}\lf(\vlam)+\eps$
loss. However, this introduces implicit dependence on $\eps$ through
$\nlam$ which may not be immediately clear. In this section, we show
bounds of the form $C/\eps$, where the constant $C$ depends only on
the feature matrix $\M$, and not on $\eps$. Additionally, we show that
this dependence on $\eps$ is optimal in Lemma~\ref{rate:eps_lbnd:lem}
of the Appendix, where $\Omega(1/\eps)$ rounds are shown to be
necessary for converging to within $\eps$ of the optimal loss on a
certain dataset. Finally, we note that the lower bounds in the
previous section indicate that $C$ can be $\Omega(2^m)$ in the worst
case for integer matrices (although it will typically be much
smaller), and hence this bound, though stronger than that of
Theorem~\ref{rate:B_rate:thm} with respect to $\eps$, cannot be used
to prove the conjecture in \citep{Schapire10}, since the constant is
not polynomial in the number of examples $m$.
\subsection{Upper Bound}
\label{rate:ub2:sec}
The main result of this section is the following rate upper
bound. A similar approach to
solving this problem was taken independently by \citet{Telgarsky11}.
\begin{theorem}
  \label{rate:rate:thm}
  AdaBoost reaches within $\eps$ of the optimal loss in at most
  $C/\eps$ rounds, where $C$ only depends on the feature matrix.
\end{theorem}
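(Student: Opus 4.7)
My approach would hinge on the decomposition lemma promised earlier in the introduction. The first step is to partition the example indices $\{1,\ldots,m\}$ into a \emph{zero-loss set} $F$ and a \emph{finite-margin set} $Z$, where $F$ consists of those examples $i$ such that there exists a direction $\vlam$ with $(\M\vlam)_j \geq 0$ for all $j\in Z$ and $(\M\vlam)_i > 0$, and $Z$ is the complement. The idea is that the loss contribution from $F$ can be driven to zero, while for $Z$ the restricted loss $L_Z(\vlam) = (1/m)\sum_{i\in Z} e^{-(\M\vlam)_i}$ attains its infimum at some finite $\vlamd$. The key structural consequence I would establish is $\inf_{\vlam} L(\vlam) = L_Z(\vlamd)$, so the target suboptimality decomposes cleanly across the two sets.

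Next I would design a potential function that mixes the suboptimality on $Z$ and the loss on $F$, something like $\Phi_t = L_F(\vlam^t) + (L_Z(\vlam^t) - L_Z(\vlamd))$, which equals the total suboptimality $L(\vlam^t) - \inf L$. The goal is to show that in every round, AdaBoost's edge $\delta_t$ satisfies $\delta_t \geq c \cdot \Phi_t$ for some constant $c$ depending only on $\M$, because then combined with the loss drop inequality $\lf(\vlam^t) \leq \lf(\vlam^{t-1})\sqrt{1-\delta_t^2}$ one obtains a recursion $\Phi_t \leq \Phi_{t-1}(1 - c'\Phi_{t-1})$ which telescopes to give $\Phi_t = O(1/t)$. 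The argument for this edge-vs-suboptimality inequality would mirror Lemma~\ref{rate:B_edge:lem}, using non-negativity of relative entropy against the reference $\vlamd$, but now the reference is \emph{fixed} (not growing with $\eps$), so the term $\norm{\vlamd - \vlam^t}_1$ that appears can be absorbed into the constant $c$ provided we show that AdaBoost's iterates stay within bounded $\ell_1$-distance of $\vlamd$ after projecting out the directions along which $L_F$ is driven to zero.

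The principal obstacle I anticipate is precisely this: controlling the iterate growth. Unlike the setting of Theorem~\ref{rate:B_rate:thm}, AdaBoost's coordinates along the ``zero-loss directions'' of $F$ may grow without bound, so a naive distance bound fails. The remedy would be to decompose each iterate $\vlam^t = \vlam^t_Z + \vlam^t_F$ into a component that affects only margins in $Z$ and one whose effect on $Z$ is zero (using the subspace structure from the decomposition lemma), and then to bound $\norm{\vlam^t_Z - \vlamd}_1$ by a constant depending only on $\M$. Once that bound is in hand, the relative-entropy argument produces the desired edge inequality with the constant $c$ depending on the geometry of the decomposition (the minimum positive margin achievable along $F$, the norm of $\vlamd$, and the smoothness of $L_Z$ near $\vlamd$).

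Finally, I would bundle all these geometric quantities into a single constant $C$ and conclude by a standard induction on the recursion $\Phi_t \leq \Phi_{t-1}(1 - \Phi_{t-1}/C)$, which yields $\Phi_t \leq C/t$ and hence the claimed $C/\eps$ round bound. The dependence of $C$ on $\M$ (possibly as large as $2^m$, matching the lower bound constructions of Section~\ref{rate:B_lbnd:sec}) is unavoidable, explaining why this bound is incomparable to Theorem~\ref{rate:B_rate:thm} and does not resolve the conjecture with polynomial-in-$m$ constants.
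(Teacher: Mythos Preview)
Your overall architecture---partition the examples via the decomposition lemma, track the total suboptimality $\Phi_t = L(\vlam^t) - \inf_\vlam L(\vlam)$, and aim for a recursion $\Phi_t \leq \Phi_{t-1}(1 - c'\Phi_{t-1})$---matches the paper's plan (note the paper reverses your naming, calling the zero-loss set $Z$ and the finite-margin set $F$). But the relative-entropy step at the heart of your argument has a real gap. Mirroring Lemma~\ref{rate:B_edge:lem} with the fixed reference $\vlamd$ yields
\[
\delta_t\cdot\norm{\vlamd - \vlam^{t-1}}_1 \;\geq\; \ln L(\vlam^{t-1}) - \ln L(\vlamd),
\]
and $\vlamd$ only minimizes the loss restricted to the finite-margin set; on the zero-loss examples it may carry arbitrary loss, so $L(\vlamd)$ can exceed $L(\vlam^{t-1})$ and the right-hand side becomes negative, giving nothing. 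No amount of control on the projected distance repairs this, because the full $\ell_1$-distance is what the H\"older step produces; and replacing $\vlamd$ by $\vlamd + A\alf$ to drive the full loss down forces $A\to\infty$ as $\Phi_t\to 0$, destroying the ``fixed reference'' premise.

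The paper takes a genuinely different route on the finite-margin set: a second-order argument rather than relative entropy. Item~\ref{rate:three:dec} of the decomposition lemma bounds all finite-margin-set margins uniformly, which lower-bounds the second derivative of the restricted loss along the segment to $\vals$; integrating gives an edge of order $\sqrt{\theta}$ on that set with no reference-distance term at all (Lemma~\ref{rate:taylor:lem}). This is then combined with the simple $\alf$-direction edge on the zero-loss set (Lemma~\ref{rate:edgestep:lem}) through an explicit case split on whether the zero-loss-set contribution is large or small relative to $\theta$ (Lemma~\ref{rate:finstep:lem}), giving $\Delta\theta \geq C_3\theta^2$ either way. Your bounded-projection intuition is not misplaced---the margin bound does control the projected iterate via the smallest nonzero singular value of $\M_F$---and one can push a first-order argument through by restricting the relative-entropy comparison to the finite-margin set and handling the zero-loss set separately; but that still requires the restriction and the case split, neither of which appears in your outline.
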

}
\old{
Our techniques build upon earlier work on the rate of convergence of
AdaBoost, which have mainly considered two particular cases. In the
first case, the \emph{weak learning assumption} holds, that is, the
edge in each round is at least some fixed constant. In this situation,
\citet{FreundSc97} and \citet{SchapireSi99} show that the optimal loss
is zero, that no solution with finite size can achieve this loss, but
AdaBoost achieves at most $\eps$ loss within $O(\ln(1/\eps))$
rounds. In the second case some finite combination of the weak
classifiers achieves the optimal loss, and \citet{RatschMiWa02}, using
results from \citet{LuoTs92}, show that AdaBoost achieves within
$\eps$ of the optimal loss again within $O(\ln(1/\eps))$ rounds.

Here we consider the most general situation, where the weak learning
assumption may fail to hold, and yet no finite solution may achieve
the optimal loss. The dataset used in Lemma~\ref{rate:eps_lbnd:lem}
and shown in Figure~\ref{rate:dat:fig} exemplifies this situation. Our
main technical contribution shows that the examples in any dataset can
be partitioned into a \emph{zero-loss set} and \emph{finite-margin
  set}, such that a certain form of the weak learning assumption holds
within the zero-loss set, while the optimal loss considering only the
finite-margin set can be obtained by some finite solution. The two
partitions provide different ways of making progress in every round,
and one of the two kinds of progress will always be sufficient for us
to prove Theorem~\ref{rate:rate:thm}.

We next state our decomposition result, illustrate it with an example,
and then state several lemmas quantifying the nature of the progress
we can make in each round. Using these lemmas, we prove
Theorem~\ref{rate:rate:thm}.
\begin{lemma}(Decomposition Lemma)
  \label{rate:dec:lem}
  For any dataset, there exists a partition of the set of training
  examples $X$ into a (possibly empty) \emph{zero-loss set} $\zl$ and a
  (possibly empty) \emph{finite-margin set} $\fl=\zl^c\eqdef X\setminus \zl$
  such that the following hold simultaneously :
  \begin{enumerate}
  \item \label{rate:one:dec} For some positive constant $\gamma > 0$,
    there exists some vector $\alf$ with unit $\ell_1$-norm
    $\norm{\alf}_1 = 1$ that attains at least $\gamma$ margin on each
    example in $\zl$, and exactly zero margin on each example in $\fl$
    \begin{eqnarray*}
      \forall i\in \zl: (\M\alf)_i \geq \gamma, &&
      \forall i\in \fl: (\M\alf)_i = 0.
    \end{eqnarray*}
  \item \label{rate:two:dec} The optimal loss considering only
    examples within $\fl$ is achieved by some finite combination
    $\vals$.
  \item \label{rate:three:dec} There is a constant $\mu_{\max} <
    \infty$, such that for any combination $\val$ with bounded loss on
    the finite-margin set, $\sum_{i\in \fl}e^{-(\M\val)_i} \leq m$, the
    margin $(\M\val)_i$ for any example $i$ in $\fl$ lies in the bounded
    interval $[-\ln m, \mu_{\max}]$.
  \end{enumerate}
\end{lemma}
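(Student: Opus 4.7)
The plan is to exhibit an explicit partition and verify each of the three properties in turn. Define the zero-loss set by
\[
\zl = \set{i \in X : \exists\, \val \in \R^N \text{ with } (\M\val)_i > 0 \text{ and } (\M\val)_k \geq 0 \ \forall k \in X},
\]
and let $\fl = X \setminus \zl$. This directly captures those examples on which one can push the margin strictly positive without ever driving some other margin negative.

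For property~\ref{rate:one:dec}, for each $i \in \zl$ pick a witness $\val^{(i)}$ and set $\oval = \sum_{i\in \zl}\val^{(i)}$. Then $(\M\oval)_k \geq 0$ for every $k\in X$ and $(\M\oval)_i > 0$ for every $i\in \zl$. For $j\in \fl$ we must have $(\M\oval)_j = 0$, since otherwise $\oval$ would itself witness $j\in \zl$, a contradiction. Normalizing $\alf = \oval/\norm{\oval}_1$ and setting $\gamma = \min_{i\in \zl}(\M\oval)_i/\norm{\oval}_1 > 0$ then gives the required vector (the degenerate case $\zl=\emptyset$ is vacuous).

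For property~\ref{rate:two:dec}, I would show the convex function $\val \mapsto \sum_{i\in \fl}e^{-(\M\val)_i}$ attains its infimum. The standard convex-analysis criterion reduces this to showing every recession direction is also a direction of constancy: any $\vu$ with $(\M\vu)_k \geq 0$ for all $k \in \fl$ must in fact satisfy $(\M\vu)_k = 0$ for every $k \in \fl$. Suppose instead some $j\in \fl$ had $(\M\vu)_j > 0$. Then for sufficiently large $C>0$, the perturbed vector $\vu + C\alf$ would have non-negative margin on every example (positive on $\zl$ via $\alf$, non-negative on $\fl$ by hypothesis) and strictly positive margin at $j$, forcing $j\in \zl$---a contradiction. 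Hence a finite minimizer $\vals$ exists.

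For property~\ref{rate:three:dec}, the lower bound $(\M\val)_i \geq -\ln m$ is immediate, since a single term $e^{-(\M\val)_i}$ exceeding $m$ would by itself violate the sum bound. For the upper bound, I would leverage the transversality property established above---namely that the image of $\val\mapsto (\M\val)_{\fl}$ meets the non-negative orthant of $\R^{\fl}$ only at the origin---to invoke Stiemke's alternative and obtain a strictly positive vector $\vp \in \R^{\fl}_{>0}$ satisfying $\sum_{i\in \fl}p_iM_{ij} = 0$ for every column $j$. This forces the identity $\sum_{i\in \fl}p_i(\M\val)_i = 0$ for all $\val$; combining with the lower bound on each $\fl$-margin and isolating any single coordinate gives $(\M\val)_{i_0} \leq (\ln m)\sum_{i\neq i_0}p_i/p_{i_0}$, so $\mu_{\max}$ can be chosen as the maximum of these finite ratios.

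The main obstacle is property~\ref{rate:three:dec}: it is the only step requiring a non-trivial alternative theorem rather than a direct construction, and the dependence of the weights $\vp$ on the entire matrix $\M$ is exactly what ultimately causes the constant in Theorem~\ref{rate:rate:thm} to depend on the data. The other two properties fall out uniformly once $\zl$ is defined correctly, and choosing that definition so that both properties follow from a single combinatorial idea is the key conceptual step.
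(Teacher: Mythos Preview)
Your proof is correct and takes a genuinely different route from the paper. The paper defines $\zl$ implicitly: it fixes an admissible sequence converging to the optimal loss (for instance, the AdaBoost iterates), extracts a subsequence along which losses either vanish or stay bounded away from zero on each example, and lets $\zl$ be the set where they vanish. The vector $\alf$ is then built by an inductive compactness argument on the unit sphere, peeling off examples that acquire strictly positive margin in the limit. Property~\ref{rate:two:dec} is obtained by projecting the sequence onto $(\ker\M_\fl)^\perp$ and arguing boundedness via the smallest positive eigenvalue of $\M_\fl^T\M_\fl$; property~\ref{rate:three:dec} is proved by contradiction, re-invoking the same subsequence and induction machinery. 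Your approach instead \emph{characterizes} $\zl$ directly by a separability criterion, builds $\alf$ in one line by summing witnesses, handles property~\ref{rate:two:dec} by the recession-cone criterion, and dispatches property~\ref{rate:three:dec} with Stiemke's alternative. Interestingly, the paper eventually proves the same Stiemke-type fact (their Lemma~\ref{rate:sep:lem}) but only later, in the appendix, when estimating the constant $\mu_{\max}$ quantitatively; you are effectively promoting that argument to the main proof. What your approach buys is brevity and an immediately explicit formula for $\mu_{\max}$ in terms of the dual vector $\vp$; what the paper's approach buys is self-containment (no named alternative theorem) and a construction of $\zl$ tied to the actual optimizing sequence, which makes the connection to AdaBoost's trajectory more transparent.
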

A proof is deferred to the next section. The decomposition~lemma
immediately implies that the vector $\vals + \infty\cdot\alf$, which
denotes $\enc{\vals + c\alf}$ in the limit $c\to\infty$, is an optimal
solution, achieving zero loss on the zero-loss set, but only finite
margins (and hence positive losses) on the finite-margin set (thereby
justifying the names).

\begin{wrapfigure}{l}{0.3\textwidth}
  \centering
  \begin{tabular}{c|c|c}
    & $\bh_1$ & $\bh_2$ \\
    \hline 
    $a$ & $+$ & $-$ \\
    $b$ & $-$ & $+$ \\
    $c$ & $+$ & $+$ 
  \end{tabular}
  \caption{A dataset requiring $\Omega(1/\eps)$ rounds for convergence.}
  \label{rate:dat:fig}
\end{wrapfigure}
Before proceeding, we give an example dataset and indicate the
zero-loss set, finite-margin set, $\vals$ and $\alf$ to illustrate our
definitions. Consider a dataset with three examples $\set{a,b,c}$ and
two hypotheses $\set{\bh_1,\bh_2}$ and the feature matrix $\M$ in
Figure~\ref{rate:dat:fig}. Here $+$ means correct ($M_{ij}=+1$) and
$-$ means wrong ($M_{ij}=-1$). The optimal solution is $\infty\cdot
(\bh_1 + \bh_2)$ with a loss of $2/3$. The finite-margin set is
$\set{a,b}$, the zero-loss set is $\set{c}$, $\alf = (1/2,1/2)$ and
$\vals=(0,0)$; for this dataset these are unique. This dataset also
serves as a lower-bound example in Lemma~\ref{rate:eps_lbnd:lem},
where we show that $2/(9\eps)$ rounds are necessary for AdaBoost to
achieve loss at most $(2/3)+\eps$.

Before providing proofs, we introduce some notation.  By
$\norm{\cdot}$ we will mean $\ell_2$-norm; every other norm will have
an appropriate subscript, such as $\norm{\cdot}_1,
\norm{\cdot}_\infty$, etc. The set of all training examples will be
denoted by $X$. By $\loss{\vlam}{i}$ we mean the exp-loss
$e^{-(\M\vlam)_i}$ on example $i$. For any subset $S\subseteq X$ of
examples, $\loss{\vlam}{S} = \sum_{i\in S}\loss{\vlam}{i}$ denotes the
total exp-loss on the set $S$. Notice $\lf(\vlam) =
(1/m)\loss{\vlam}{X}$, and that $ D_{t+1}(i) =
\loss{\vlam^{t}}{i}/\loss{\vlam^{t}}{X}$, where $\vlam^t$ is the
combination found by AdaBoost at the end of round $t$. By
$\leds{\val}{\vlam}{S}$ we mean the edge obtained on the set $S$ by
the vector $\val$, when the weights over the examples are given by
$\loss{\vlam}{\cdot} / \loss{\vlam}{S}$:
\[
\leds{\val}{\vlam}{S} = \abs{\frac{1}{\loss{\vlam}{S}}\sum_{i\in
    S}\loss{\vlam}{i}(\M\val)_i}.
\]
In the rest of the section, by ``loss'' we mean the unnormalized loss
$\loss{\vlam}{X} = m\lf(\vlam)$ and show that in $C/\eps$ rounds AdaBoost
converges to within $\eps$
of the optimal unnormalized loss $\inf_{\vlam}\loss{\vlam}{X}$,
henceforth denoted by $K$. 
Note that this means AdaBoost takes $C/\eps$ rounds to converge to
within $\eps/m$ of the optimal normalized loss, that is to loss at most
$\inf_{\vlam}\lf(\vlam) + \eps/m$.
Replacing $\eps$ by $m\eps$, it takes $C/(m\eps)$ steps to attain
normalized loss at most $\inf_{\vlam}\lf(\vlam) + \eps$. Thus, whether
we use normalized or unnormalized does not substantively affect the
result in Theorem~\ref{rate:rate:thm}. 
The progress due to the zero-loss set is
now immediate from Item~\ref{rate:one:dec} of the decomposition lemma:
\begin{lemma}
  \label{rate:edgestep:lem}
  In any round $t$, the maximum edge $\delta_t$ is at least
  $\gamma\loss{\vlam^{t-1}}{\zl}/\loss{\vlam^{t-1}}{X}$,
  where $\gamma$ is as in Item~\ref{rate:one:dec} of the decomposition
  lemma.
\end{lemma}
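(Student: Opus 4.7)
The plan is to exploit the specific vector $\alf$ guaranteed by Item~\ref{rate:one:dec} of the decomposition lemma, use it to exhibit a direction with a large weighted correlation under the current AdaBoost distribution $D_t$, and then pass to the maximum edge $\delta_t$ via a standard $\ell_1$/$\ell_\infty$ duality argument.

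More precisely, recall that $D_t(i) = \loss{\vlam^{t-1}}{i}/\loss{\vlam^{t-1}}{X}$, and by definition $\delta_t = \max_j |\sum_i D_t(i) M_{ij}|$. First, I would compute the weighted correlation that the particular combination $\alf$ achieves on the current distribution,
\[
\sum_{i} D_t(i)\,(\M\alf)_i \;=\; \sum_{i\in\zl} D_t(i)(\M\alf)_i \;+\; \sum_{i\in\fl} D_t(i)(\M\alf)_i.
\]
By Item~\ref{rate:one:dec} of the decomposition lemma, the second sum vanishes (since $(\M\alf)_i = 0$ for $i\in\fl$), while every term in the first sum is at least $\gamma D_t(i)$. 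Hence
\[
\sum_i D_t(i)(\M\alf)_i \;\ge\; \gamma \sum_{i\in\zl} D_t(i) \;=\; \gamma\,\frac{\loss{\vlam^{t-1}}{\zl}}{\loss{\vlam^{t-1}}{X}}.
\]

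Next I would convert this bound on a convex combination of the per-coordinate correlations into a bound on the largest one. Writing $\alf = (\alpha^\dagger_1,\ldots,\alpha^\dagger_N)$ and swapping the order of summation,
\[
\sum_i D_t(i)(\M\alf)_i \;=\; \sum_{j} \alpha^\dagger_j \Bigl(\sum_i D_t(i) M_{ij}\Bigr)
\;\le\; \|\alf\|_1 \cdot \max_j\Bigl|\sum_i D_t(i) M_{ij}\Bigr| \;=\; \delta_t,
\]
using Hölder's inequality together with $\|\alf\|_1 = 1$. Chaining the two bounds gives exactly $\delta_t \ge \gamma\loss{\vlam^{t-1}}{\zl}/\loss{\vlam^{t-1}}{X}$, which is the claim.

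There is no real obstacle here: the decomposition lemma has already done the heavy lifting by producing a unit-norm direction $\alf$ whose margin structure separates the two subsets cleanly. The only thing to check carefully is the sign convention, namely that the lower bound on $\sum_i D_t(i)(\M\alf)_i$ is a non-negative quantity, so that taking absolute values in the definition of $\delta_t$ preserves the inequality; this is automatic since $\gamma > 0$ and the margins on $\zl$ are all positive.
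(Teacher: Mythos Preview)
Your proof is correct and follows essentially the same approach as the paper: compute the correlation of the special direction $\alf$ under $D_t$, use Item~\ref{rate:one:dec} to lower bound it by $\gamma$ times the mass on $\zl$, and then pass to the maximum coordinate edge via the $\ell_1/\ell_\infty$ bound (the paper phrases this last step as an averaging argument over the distribution $p(j)\propto|\eta^\dagger_j|$, which is the same thing).
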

\begin{proof}
  Recall the distribution $D_t$ created by AdaBoost in round $t$ puts
  weight $D_t(i) = \loss{\vlam^{t-1}}{i}/\loss{\vlam^{t-1}}{X}$ on
  each example $i$. From Item~\ref{rate:one:dec} we get
\[
\led{\alf}{\vlam^{t-1}}
=
\abs{\frac{1}{\loss{\vlam^{t-1}}{X}}\sum_{i\in X}\loss{\vlam^{t-1}}{i}(\M\alf)_i}
=
\frac{1}{\loss{\vlam^{t-1}}{X}}\sum_{i\in \zl}\gamma\loss{\vlam^{t-1}}{i}
= \gamma\enc{\frac{\loss{\vlam^{t-1}}{\zl}}{\loss{\vlam^{t-1}}{X}}}.
\]
Since $(\M\alf)_i = \sum_j \eta^{\dagger}_j(\M\ve_j)_i$, we may
rewrite the edge $\led{\alf}{\vlam^{t-1}}$ as follows:
\begin{eqnarray*}
 \led{\alf}{\vlam^{t-1}}
&=&
\abs{\frac{1}{\loss{\vlam^{t-1}}{X}}\sum_{i\in X}\loss{\vlam^{t-1}}{i}
\sum_j \eta^{\dagger}_j(\M\ve_j)_i}\\
&=&
\abs{\sum_j \eta^{\dagger}_j
\frac{1}{\loss{\vlam^{t-1}}{X}}\sum_{i\in X}
\loss{\vlam^{t-1}}{i}(\M\ve_j)_i}\\
&=&
\abs{\sum_j \eta^{\dagger}_j \led{\ve_j}{\vlam^{t-1}}}
\leq \sum_j \abs{\eta^{\dagger}_j}\led{\ve_j}{\vlam^{t-1}}.
\end{eqnarray*}
Since the $\ell_1$-norm of $\alf$ is $1$, the weights
$\abs{\eta^{\dagger}_j}$ form some distribution $p$ over the columns
$1,\ldots,N$. We may therefore conclude
 \[
 \gamma\enc{\frac{\loss{\vlam^{t-1}}{\zl}}{\loss{\vlam^{t-1}}{X}}}
 =
 \led{\alf}{\vlam^{t-1}}
 \leq
 \E_{j\sim p}\enco{\led{\ve_j}{\vlam^{t-1}}}
 \leq
 \max_{j}\led{\ve_j}{\vlam^{t-1}} \leq \delta_t.
 \]
\end{proof}
If the set $\fl$ were empty, then Lemma~\ref{rate:edgestep:lem} implies
an edge of $\gamma$ is available in each round. This in fact means
that the weak learning assumption holds, and using
\eqref{rate:drop:eqn}, we can show an $O(\ln(1/\eps)\gamma^{-2})$ bound
matching the rate bounds of \citet{FreundSc97} and
\citet{SchapireSi99}. So henceforth, we assume that $\fl$ is
non-empty. Note that this implies that the optimal loss $K$ is at
least $1$ (since any solution will get non-positive margin on some
example in $\fl$), a fact we will use later in the proofs.

Lemma~\ref{rate:edgestep:lem} says that the edge is large if the loss
on the zero-loss set is large. On the other hand, when it is small,
Lemmas~\ref{rate:taylor:lem} and \ref{rate:finstep:lem} together show
how AdaBoost can make good progress using the finite margin
set. Lemma~\ref{rate:taylor:lem} uses second order methods to show how
progress is made in the case where there is a finite solution. Similar
arguments, under additional assumptions, have earlier appeared in
\citep{RatschMiWa02}.
\begin{lemma}
  \label{rate:taylor:lem}
  Suppose $\vlam$ is a combination such that $m \geq \loss{\vlam}{\fl}
  \geq K$.  Then in some coordinate direction the edge is at least
  $\sqrt{C_0\enc{\loss{\vlam}{\fl}-K}/\loss{\vlam}{\fl}}$, where $C_0$ is
  a constant depending only on the feature matrix $\M$.
\end{lemma}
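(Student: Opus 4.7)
The plan is to prove a Polyak--\L{}ojasiewicz-type inequality for the restricted loss $\loss{\cdot}{\fl}$ and then convert it into a coordinate-wise (edge) lower bound. Define $\phi(t) \eqdef \loss{\vlam+t(\vals-\vlam)}{\fl}$; the key point is that $\vals$ minimizes $\loss{\cdot}{\fl}$ (Item~2 of the Decomposition Lemma), so $\phi'(1)=0$, and $\loss{\cdot}{\fl}$ has enough curvature along $\vals - \vlam$ to give a quadratic relationship between suboptimality and gradient.

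First I would use Item~3 of the Decomposition Lemma to bound margins. The hypothesis $K \leq \loss{\vlam}{\fl} \leq m$ combined with Item~3 forces $(\M\vlam)_i, (\M\vals)_i \in [-\ln m,\mu_{\max}]$ for each $i \in \fl$. Because margins are linear along the segment $\vlam_t \eqdef \vlam + t(\vals-\vlam)$, the same interval contains every intermediate margin, and so $e^{-(\M\vlam_t)_i} \in [e^{-\mu_{\max}},m]$ for $t \in [0,1]$. Next, integration by parts using $\phi'(1)=0$ gives
\[
\loss{\vlam}{\fl} - K \;=\; \phi(0)-\phi(1) \;=\; \int_0^1 s\,\phi''(s)\,ds,
\]
and the margin bounds sandwich $\phi''(s) = \sum_{i\in\fl}e^{-(\M\vlam_s)_i}(\M(\vals-\vlam))_i^2$ between $e^{-\mu_{\max}}\|\M_\fl(\vals-\vlam)\|^2$ and $m\|\M_\fl(\vals-\vlam)\|^2$, yielding a two-sided quadratic-growth bound $\loss{\vlam}{\fl}-K \asymp \|\M_\fl(\vals-\vlam)\|^2$ with constants depending only on $\M$.

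Then I would express the gradient using the mean value theorem applied to $\partial_j\loss{\vlam}{\fl} - \partial_j\loss{\vals}{\fl} = \partial_j\loss{\vlam}{\fl}$ (since $\nabla\loss{\vals}{\fl}=\vzero$) along the segment, obtaining $\nabla\loss{\vlam}{\fl} = \M_\fl^T D\,\M_\fl(\vlam-\vals)$ for a diagonal matrix $D$ whose entries (of the form $\int_0^1 e^{-(\M\vlam_s)_i}\,ds$) lie in $[e^{-\mu_{\max}},m]$. Taking inner product with $(\vlam-\vals)$ and invoking the lower bound of Step~2,
\[
(\vlam-\vals)\cdot\nabla\loss{\vlam}{\fl} \;=\; (\M_\fl(\vlam-\vals))^T D\,(\M_\fl(\vlam-\vals)) \;\geq\; (2e^{-\mu_{\max}}/m)\,(\loss{\vlam}{\fl}-K).
\]
On the other hand, H\"older gives $(\vlam-\vals)\cdot\nabla\loss{\vlam}{\fl} \leq \|\vlam-\vals\|_1\,\|\nabla\loss{\vlam}{\fl}\|_\infty$. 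Choosing $\vals$ to be the minimizer of $\loss{\cdot}{\fl}$ closest to $\vlam$, the set of minimizers is an affine translate of $\ker(\M_\fl)$, so a Hoffman-type inequality gives $\|\vlam-\vals\|_2 \leq H\,\|\M_\fl(\vlam-\vals)\|_2$ for some constant $H=H(\M)$. Combining this with the upper bound of Step~2 yields $\|\vlam-\vals\|_1 \leq H\sqrt{N}\sqrt{2e^{\mu_{\max}}(\loss{\vlam}{\fl}-K)}$. Dividing the two chains of inequalities shows $\|\nabla\loss{\vlam}{\fl}\|_\infty \geq c\sqrt{\loss{\vlam}{\fl}-K}$ for some $c=c(\M)$; dividing once more by $\loss{\vlam}{\fl}$ and absorbing the factor $\loss{\vlam}{\fl} \leq m$ produces the claimed edge bound with $C_0$ a function only of $\mu_{\max}$, $m$, $N$, and $H$.

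\textbf{Main obstacle.} The heart of the argument is the Hoffman-type inequality producing the constant $H$. Without it, H\"older yields only a linear (rather than square-root) dependence of the edge on the suboptimality $\loss{\vlam}{\fl}-K$, which would degrade the final rate from $C/\eps$ to $C/\sqrt{\eps}$. The crucial leverage is that the minimizer set of $\loss{\cdot}{\fl}$ is an affine subspace determined entirely by $\M_\fl$, so projecting $\vlam$ onto it gives a $\vals$ with $\|\vlam-\vals\|_2$ controlled by $\|\M_\fl(\vlam-\vals)\|_2$ via the smallest nonzero singular value of $\M_\fl$; this is what makes $H$ depend only on the dataset.
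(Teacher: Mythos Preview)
Your proposal is correct and follows essentially the same approach as the paper: both analyze the segment from $\vlam$ to the finite minimizer $\vals$, use Item~3 of the Decomposition Lemma to sandwich the losses (and hence $\phi''$) along the segment, take $\vals-\vlam$ orthogonal to $\ker\M_\fl$ so that $\|\vlam-\vals\|$ is controlled by $\|\M_\fl(\vlam-\vals)\|$ via the smallest positive singular value (your Hoffman constant $H$ is exactly $1/\lmin$), and then pass from the directional derivative to a coordinate edge via H\"older. The only cosmetic difference is that the paper packages the second-order step as the single inequality $|f'(1)|^2\geq 2(\inf_x f''(x))\bigl(f(1)-f(0)\bigr)$ on the one-dimensional restriction, whereas you reach the same conclusion through a two-sided quadratic-growth bound plus a mean-value integral for the gradient, giving a slightly worse constant but still $C_0=C_0(\M)$.
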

\begin{proof}
  Let $\M_\fl\in \R^{|\fl|\times N}$ be the matrix $\M$ restricted to only
  the rows corresponding to the examples in $\fl$. Choose $\val$ such
  that $\vlam + \val = \vals$ is an optimal solution over $\fl$. Without
  loss of generality assume that $\val$ lies in the orthogonal
  subspace of the null-space $\set{\vu: \M_\fl\vu = \vzero}$ of $\M_\fl$
  (since we can translate $\vals$ along the null space if necessary
  for this to hold). If $\val = \vzero$, then $\loss{\vlam}{\fl} = K$
  and we are done. Otherwise $\norm{\M_\fl\val} \geq \lmin\norm{\val}$,
  where $\lmin^2$ is the smallest positive eigenvalue of the symmetric
  matrix $\M_\fl^T\M_\fl$ (exists since $\M_\fl\val \neq \vzero$). Now define
  $f:[0,1] \to \R$ as the loss along the (rescaled) segment
  $[\vals,\vlam]$
\[
f(x) \eqdef \loss{(\vals- x\val)}{\fl}
= \sum_{i\in \fl} \loss{\vals}{i}e^{x(\M\val)_i}.
\]
This implies that $f(0) = K$ and $f(1) = \loss{\vlam}{\fl}$. Notice that
the first and second derivatives of $f(x)$ are given by:
\begin{eqnarray*}
  f'(x) = \sum_{i\in \fl} (\M_\fl\val)_i\loss{(\vals - x\val)}{i},
  &&
  f''(x) = \sum_{i\in \fl} (\M_\fl\val)^2_i\loss{(\vals - x\val)}{i}.
\end{eqnarray*}
We next lower bound possible values of the second derivative as follows:
\[
f''(x) =  \sum_{i'\in \fl} (\M_\fl\val)^2_{i'}\loss{(\vals - x\val)}{i'}
\geq \sum_{i'\in \fl} (\M_\fl\val)^2_{i'}\min_{i}\loss{(\vals - x\val)}{i}
\geq \norm{\M_\fl\val}^2\min_{i}\loss{(\vals-x\val)}{i}.
\]
Since both $\vlam = \vals - \val$, and $\vals$ suffer total loss at
most $m$, by convexity, so does $\vals - x\val$ for any
$x\in[0,1]$. Hence we may apply Item~\ref{rate:three:dec} of the
decomposition lemma to the vector $\vals-x\val$, for any $x\in[0,1]$,
to conclude that $\loss{(\vals-x\val)}{i} =
\exp\enct{-(\M_\fl(\vals-x\val))_i} \geq e^{-\mu_{\max}}$ on every
example $i$. Therefore we have,
\[
f''(x) \geq \norm{\M_\fl\val}^2e^{-\mu_{\max}} \geq
\lmin^2e^{-\mu_{\max}}\norm{\val}^2
\mbox{ (by choice of $\val$) }.
\]
A standard second-order result is \citep[see
e.g.][eqn.~(9.9)]{BoydVa04}
\[
\abs{f'(1)}^2 \geq 2\enc{\inf_{x\in[0,1]}f''(x)}\enc{f(1) - f(0)}.
\]
Collecting our results so far, we get
\[
\sum_{i\in \fl}\loss{\vlam}{i}(\M\val)_i = \abs{f'(1)}
\geq \norm{\val}
\sqrt{2\lmin^2e^{-\mu_{\max}}\enc{\loss{\vlam}{\fl}-K}}.
\]
Next let $\oval = \val/\norm{\val}_1$ be $\val$ rescaled to have
unit $\ell_1$ norm. Then we have
\[
\sum_{i\in \fl}\loss{\vlam}{i}(\M\oval)_i
= \frac{1}{\norm{\val}_1}\sum_i\loss{\vlam}{i}(\M\val)_i
\geq  \frac{\norm{\val}}{\norm{\val}_1}
\sqrt{2\lmin^2e^{-\mu_{\max}}\enc{\loss{\vlam}{\fl}-K}}.
\]
Applying the Cauchy-Schwarz inequality, we may lower bound
$\frac{\norm{\val}}{\norm{\val}_1}$ by $1/\sqrt{N}$ (since $\val\in
\R^N$). Along with the fact $\loss{\vlam}{\fl}\leq m$, we may write
\[
\frac{1}{\loss{\vlam}{\fl}}\sum_{i\in \fl}\loss{\vlam}{i}(\M\oval)_i
\geq
\sqrt{2\lmin^2N^{-1}m^{-1}e^{-\mu_{\max}}}
\sqrt{\enc{\loss{\vlam}{\fl}-K}/\loss{\vlam}{\fl}}.
\]
If we define $p$ to be a distribution on the columns
$\set{1,\ldots,N}$ of $\M_\fl$ which puts probability $p(j)$ proportional
to $\abs{\oval_j}$ on column $j$, then we have
\[
\frac{1}{\loss{\vlam}{\fl}}\sum_{i\in \fl}\loss{\vlam}{i}(\M\oval)_i 
\leq \E_{j\sim p}
\abs{\frac{1}{\loss{\vlam}{\fl}}
  \sum_{i\in \fl}\loss{\vlam}{i}(\M\ve_j)_i} \leq
\max_{j}\abs{\frac{1}{\loss{\vlam}{\fl}}
\sum_{i\in \fl}\loss{\vlam}{i}(\M\ve_j)_i}.
\]
Notice the quantity inside the max is precisely the edge
$\leds{\ve_j}{\vlam}{\fl}$ in direction $j$. Combining everything, the
maximum possible edge is 
\[
\max_j \leds{\ve_j}{\vlam}{\fl} \geq
\sqrt{C_0\enc{\loss{\vlam}{\fl}-K}/\loss{\vlam}{\fl}},
\]
where we define $C_0 = 2\lmin^2N^{-1}m^{-1}e^{-\mu_{\max}}$.
\end{proof}
\begin{lemma}
  \label{rate:finstep:lem}
  Suppose, at some stage of boosting, the combination found by
  AdaBoost is $\vlam$, and the loss is $K+\theta$. Let $\dth$ denote
  the drop in the suboptimality $\theta$ after one more round; i.e.,
  the loss after one more round is $K+\theta-\dth$. Then there are
  constants $C_1, C_2$ depending only on the feature matrix (and not
  on $\theta$), such that if $\loss{\vlam}{\zl} < C_1\theta$, then $\dth
  \geq C_2\theta$.
\end{lemma}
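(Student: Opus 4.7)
The strategy is to use Lemma~\ref{rate:taylor:lem} to extract a large edge on the finite-margin set $\fl$, then to convert that restricted edge into a comparable edge on the whole training set $X$, and finally to plug the result into the per-round loss drop \eqref{rate:drop:eqn}. Write $a \eqdef \loss{\vlam}{\fl} - K$ and $b \eqdef \loss{\vlam}{\zl}$, so that $a+b = \theta$; the hypothesis $b < C_1\theta$ then yields $a > (1-C_1)\theta$. Since the total loss is non-increasing through rounds and starts at $m$, we always have $\loss{\vlam}{X} \leq m$, so also $\loss{\vlam}{\fl} \leq m$ and $\theta \leq m$.

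The conditions $K \leq \loss{\vlam}{\fl} \leq m$ let us apply Lemma~\ref{rate:taylor:lem} to obtain a direction $\ve_j$ with
\[
\leds{\ve_j}{\vlam}{\fl} \;\geq\; \sqrt{C_0\,a/\loss{\vlam}{\fl}} \;\geq\; \sqrt{C_0(1-C_1)\theta/\loss{\vlam}{\fl}}.
\]
The edge $\delta_t$ that AdaBoost picks is at least the full-$X$ edge along $\ve_j$, and splitting its weighted sum into its $\fl$- and $\zl$-parts (bounding the latter by $\loss{\vlam}{\zl}$ using $|M_{ij}|\le 1$) gives
\[
\delta_t\cdot\loss{\vlam}{X} \;\geq\; \leds{\ve_j}{\vlam}{\fl}\cdot\loss{\vlam}{\fl} - \loss{\vlam}{\zl} \;\geq\; \sqrt{C_0(1-C_1)\theta\cdot\loss{\vlam}{\fl}} - C_1\theta \;\geq\; \sqrt{C_0(1-C_1)\theta} - C_1\theta,
\]
where the last step uses $\loss{\vlam}{\fl}\geq K\geq 1$ (the bound $K\geq 1$ having been noted in the text under the assumption that $\fl$ is non-empty). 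Because $\theta\leq m$ implies $C_1\theta \leq C_1\sqrt{m}\,\sqrt{\theta}$, choosing $C_1$ small enough (depending only on $C_0$ and $m$) forces the correction term to be at most half the main term, giving $\delta_t \geq C_3\sqrt{\theta}/\loss{\vlam}{X}$ for a matrix-dependent constant $C_3 > 0$.

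Plugging into \eqref{rate:drop:eqn} and using $1 - \sqrt{1-x^2} \geq x^2/2$ on $[0,1]$ produces the per-round drop
\[
\dth \;\geq\; \loss{\vlam}{X}\bigl(1-\sqrt{1-\delta_t^2}\bigr) \;\geq\; \loss{\vlam}{X}\,\delta_t^2/2 \;\geq\; \frac{C_3^2\,\theta}{2\loss{\vlam}{X}} \;\geq\; \frac{C_3^2}{2m}\,\theta,
\]
so one may take $C_2 = C_3^2/(2m)$. The one delicate step is the choice of $C_1$: the $\zl$-penalty subtracted when passing from the $\fl$-edge to the $X$-edge scales linearly in $\theta$, while the $\fl$-edge itself is only $\Omega(\sqrt{\theta})$, so the subtraction can be absorbed only because $\theta$ is a priori bounded by $m$. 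Everything else is a routine calculation.
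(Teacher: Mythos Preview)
Your proof is correct and follows essentially the same route as the paper: invoke Lemma~\ref{rate:taylor:lem} for an $\fl$-edge, split the full-$X$ edge into its $\fl$ and $\zl$ parts via the triangle inequality, absorb the $\zl$-correction by choosing $C_1$ small relative to $C_0$ and $m$, and then apply \eqref{rate:drop:eqn}. The only cosmetic difference is that the paper squares $\sqrt{2C_0(1-C_1)\theta} - C_1\theta$ and bounds the cross term directly, whereas you bound $C_1\theta \leq C_1\sqrt{m}\,\sqrt{\theta}$ first; both lead to the same $\dth \geq (\text{const}/m)\,\theta$ conclusion.
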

\begin{proof}
  Let $\vlam$ be the current solution found by boosting.  Using
  Lemma~\ref{rate:taylor:lem}, pick a direction $j$ in which the edge
  $\leds{\ve_j}{\vlam}{\fl}$ restricted to the finite loss set is at
  least $\sqrt{2C_0(\loss{\vlam}{\fl}-K)/\loss{\vlam}{\fl}}$. We can bound
  the edge $\led{\ve_j}{\vlam}$ on the entire set of examples as
  follows:
  \begin{eqnarray*}
    \led{\ve_j}{\vlam} &=&
    \frac{1}{\loss{\vlam}{X}}\abs{\sum_{i\in
        \fl}\loss{\vlam}{i}(\M\ve_j)_i
      + \sum_{i\in
        \zl}\loss{\vlam}{i}(\M\ve_j)_i} \\
    &\geq&\frac{1}{\loss{\vlam}{X}}
    \enc{\abs{\loss{\vlam}{\fl}\leds{\ve_j}{\vlam}{\fl}}
      - \sum_{i\in \zl}\loss{\vlam}{i}}
    \mbox{ (using the triangle inequality)} \\
    &\geq&\frac{1}{\loss{\vlam}{X}}\enc{
      \sqrt{2C_0(\loss{\vlam}{\fl}-K)\loss{\vlam}{\fl}}
      - \loss{\vlam}{\zl}}.
  \end{eqnarray*}
  Now, $\loss{\vlam}{\zl} < C_1\theta$, and $\loss{\vlam}{\fl}-K = \theta
  - \loss{\vlam}{\zl} \geq (1-C_1)\theta$. Further, we will choose $C_1
  < 1$, so that $\loss{\vlam}{\fl} \geq K \geq 1$. Hence, the previous
  inequality implies
  \[
  \led{\ve_j}{\vlam} \geq \frac{1}{K+\theta}\enc{
    \sqrt{2C_0(1-C_1)\theta} - C_1\theta}.
  \]
  Set $C_1 = \min\set{1/2,(1/4)\sqrt{C_0/(2m)}}$. Using $\theta \leq
  K+\theta = \loss{\vlam}{X} \leq m$, we can bound the square of the
  term in brackets on the previous line as
  \begin{eqnarray*}
    \enc{\sqrt{2C_0(1-C_1)\theta} - C_1\theta}^2
    &\geq&
      2C_0(1-C_1)\theta - 2C_1\theta\sqrt{2C_0(1-C_1)\theta}\\
    &\geq&
    2C_0(1-1/2)\theta 
    - 2\enc{(1/4)\sqrt{C_0/(2m)}}\theta\sqrt{2C_0(1-0)m}\\
    &=&
    C_0\theta/2.
  \end{eqnarray*}
  So, if $\delta$ is the maximum edge in any direction, then
  \[
  \delta
  \geq
  \led{\ve_j}{\vlam} \geq \sqrt{C_0\theta/(2(K+\theta)^2)}
  \geq
  \sqrt{C_0\theta/(2m(K+\theta))},
  \]
  where, for the last inequality, we again used $K+\theta \leq m$.
  Therefore the loss after one more step is at most
  $(K+\theta)\sqrt{1-\delta^2} \leq (K+\theta)(1-\delta^2/2) \leq
  K+\theta - \frac{C_0}{4m}\theta$. Setting $C_2 = C_0/(4m)$ completes the
  proof.
 \end{proof}

 \noindent{\bf Proof of Theorem~\ref{rate:rate:thm}.} At any stage of
 boosting, let $\vlam$ be the current combination, and $K+\theta$ be
 the current loss. We show that the new loss is at most $K+\theta -
 \dth$ for $\dth \geq C_3\theta^2$ for some constant $C_3$ depending
 only on the dataset (and not $\theta$). To see this, either $\loss{\vlam}{\zl} < C_1\theta$, in which case
 Lemma~\ref{rate:finstep:lem} applies, and $\dth \geq C_2\theta \geq
 (C_2/m)\theta^2$ (since $\theta = \loss{\vlam}{X}-K \leq m$). Or
 $\loss{\vlam}{\zl} \geq C_1\theta$, in which case applying
 Lemma~\ref{rate:edgestep:lem} yields $\delta \geq \gamma
 C_1\theta/\loss{\vlam}{X} \geq (\gamma C_1/m)\theta$. By
 \eqref{rate:drop:eqn}, $\dth \geq \loss{\vlam}{X}(1-\sqrt{1-\delta^2}) 
 \geq \loss{\vlam}{X}\delta^2/2 \geq (K/2)(\gamma
 C_1/m)^2\theta^2$. Using $K\geq 1$ and choosing $C_3$ appropriately
 gives the required condition. 

 If $K+\theta_t$ denotes the loss in round $t$, then the above claim
 implies $\theta_t - \theta_{t+1} \geq C_3\theta_t^2$. 
 Applying Lemma~\ref{rate:rec:lem} to the sequence $\enct{\theta_t}$ we have
 $1/\theta_T - 1/\theta_0 \geq C_3T$ for any $T$. Since $\theta_0 \geq
 0$, we have $T \leq 1/(C_3\theta_T)$. 
 Hence to achieve loss $K+\eps$, $C_3^{-1}/\eps$ rounds suffice.
  \qed

  \subsection{Proof of the decomposition lemma}
  \label{rate:dec:sec}
 Throughout this section we only consider (unless otherwise stated)
 \emph{admissible} combinations $\vlam$ of weak classifiers, which
 have loss $\loss{\vlam}{X}$ bounded by $m$ (since such are the ones
 found by boosting).  We prove Lemma~\ref{rate:dec:lem} in three
 steps. We begin with a simple lemma that rigorously defines the
 zero-loss and finite-margin sets.
\begin{lemma}
  \label{rate:sol:lem}
  For any sequence $\val_1,\val_2,\ldots,$ of admissible combinations
  of weak classifiers, we can find a subsequence
  $\val_{(1)}=\val_{t_1},\val_{(2)}=\val_{t_2},\ldots,$ whose losses
  converge to zero on all examples in some fixed (possibly empty)
  subset $\zl$ (the zero-loss set), and losses bounded away from zero in
  its complement $X\setminus \zl$(the finite-margin set)
  \begin{eqnarray}
    \label{rate:subseq:eqn}    
    \forall x\in \zl : \lim_{t \to \infty}\loss{\val_{(t)}}{x} = 0,
    &\mbox{  }&
    \forall x\in X\setminus \zl : \inf_i\loss{\val_{(t)}}{x} > 0.
  \end{eqnarray}
\end{lemma}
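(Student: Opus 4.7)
The plan is to exploit the finiteness of the training set together with boundedness of per-example losses to extract a subsequence via Bolzano--Weierstrass, then define the partition by the limit value.

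First, observe that for every admissible combination $\val$ and every training example $i \in X$, the per-example loss $\loss{\val}{i} = e^{-(\M\val)_i}$ lies in the bounded interval $(0, m]$: it is strictly positive by definition, and the upper bound follows because $\loss{\val}{i} \leq \loss{\val}{X} \leq m$ by admissibility. So the sequence $\{\loss{\val_t}{i}\}_{t \geq 1}$ is a bounded sequence of nonnegative reals for each fixed $i$.

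Next, I would iteratively extract subsequences, once for each example. Enumerate $X = \{x_1, \ldots, x_m\}$. By Bolzano--Weierstrass, there is a subsequence along which $\loss{\val_t}{x_1}$ converges to some limit $\ell_1 \in [0,m]$. From that subsequence, extract a further subsequence along which $\loss{\val_t}{x_2}$ converges to some $\ell_2$, and so on. After $m$ extractions (a finite number, so no diagonalization is strictly needed), obtain a single subsequence $\val_{(1)}, \val_{(2)}, \ldots$ along which $\loss{\val_{(t)}}{x_k} \to \ell_k$ for every $k = 1, \ldots, m$.

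Finally, define $\zl = \{x_k : \ell_k = 0\}$ and take its complement as the finite-margin set. For $x_k \in \zl$ the first condition in~\eqref{rate:subseq:eqn} holds by construction. For $x_k \notin \zl$, the limit $\ell_k$ is strictly positive, so there exists $t_0$ with $\loss{\val_{(t)}}{x_k} \geq \ell_k/2 > 0$ for all $t \geq t_0$; combined with the fact that each of the finitely many earlier terms $\loss{\val_{(1)}}{x_k}, \ldots, \loss{\val_{(t_0-1)}}{x_k}$ is strictly positive, the infimum over $t$ is bounded below by a positive constant, establishing the second condition. No substantial obstacle is expected here; the argument is essentially compactness on a finite-dimensional coordinate projection of the loss vectors, with the strict positivity of the exponential ensuring that ``limit positive'' upgrades to ``infimum positive.''
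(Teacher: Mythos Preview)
Your proof is correct and follows essentially the same iterative subsequence-extraction scheme as the paper. The only cosmetic difference is that you invoke Bolzano--Weierstrass to force convergence of every per-example loss and then read off $\zl$ from which limits are zero, whereas the paper directly tests, for each example in turn, whether the infimal loss along the current subsequence is zero and, if so, passes to a further subsequence with loss below $1/t$ on that example.
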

\begin{proof}
  We will build a zero-loss set and the final subsequence
  incrementally. Initially the set is empty. Pick the first
  example. If the infimal loss ever attained on the example in the
  sequence is bounded away from zero, then we do not add it to the
  set. Otherwise we add it, and consider only the subsequence whose
  $t\th$ element attains loss less than $1/t$ on the
  example. Beginning with this subsequence, we now repeat with other
  examples. The final sequence is the required subsequence, and the
  examples we have added form the zero-loss set.
\end{proof}
We apply Lemma~\ref{rate:sol:lem} to some admissible sequence
converging to the optimal loss (for instance, the one found by AdaBoost). Let
us call the resulting subsequence $\vals_{(t)}$, the obtained
zero-loss set $\zl$, and the finite-margin set $\fl = X\setminus \zl$. The
next lemma shows how to extract a single combination out of the
sequence $\vals_{(t)}$ that satisfies the properties in
Item~\ref{rate:one:dec} of the decomposition lemma.
\begin{lemma}
  \label{rate:basesol:lem}
  Suppose $\M$ is the feature matrix, $\zl$ is a subset of the examples,
  and $\val_{(1)},\val_{(2)},\ldots,$ is a sequence of combinations of
  weak classifiers such that $\zl$ is its zero loss set, and $X\setminus
  \zl$ its finite loss set, that is, \eqref{rate:subseq:eqn} holds. Then
  there is a combination $\alf$ of weak classifiers that achieves
  positive margin on every example in $\zl$, and zero margin on every
  example in its complement $X\setminus \zl$, that is:
  \[
  (\M\alf)_i
  \begin{cases}
    > 0 & \mbox { if } i\in \zl, \\
    = 0 & \mbox { if } i\in X\setminus \zl.
  \end{cases}
  \]
\end{lemma}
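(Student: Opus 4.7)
The plan is to prove Lemma~\ref{rate:basesol:lem} via a theorem of alternatives --- specifically a mixed strict-inequality/equality variant of Gordan's theorem, applied to the linear map $\M$ restricted to the row sets $\zl$ and $\fl$. Before invoking it, I would convert the analytic hypotheses on losses into bounds on margins. Admissibility of $\val_{(t)}$ gives $\loss{\val_{(t)}}{i} \leq m$, hence $(\M\val_{(t)})_i \geq -\ln m$ for every $i$. For $i \in \zl$, the hypothesis $\loss{\val_{(t)}}{i} \to 0$ forces $(\M\val_{(t)})_i \to +\infty$; for $i \in \fl$, the hypothesis $\inf_t \loss{\val_{(t)}}{i} > 0$ combined with admissibility confines $(\M\val_{(t)})_i$ to a compact interval.

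I would then recast the conclusion as the linear feasibility question: find $\alf \in \R^N$ with $(\M\alf)_i > 0$ for $i \in \zl$ and $(\M\alf)_i = 0$ for $i \in \fl$. Letting $\M(i)$ denote the $i$th row of $\M$, a standard mixed Gordan-type theorem of alternatives says that either this system is feasible, or there exist nonnegative weights $p_i \geq 0$ for $i \in \zl$, not all zero, together with real weights $q_i$ for $i \in \fl$, such that the row combination $\sum_{i \in \zl} p_i\,\M(i) + \sum_{i \in \fl} q_i\,\M(i)$ vanishes identically. Applying this dual identity to each $\val_{(t)}$ gives
\[
\sum_{i \in \zl} p_i (\M\val_{(t)})_i \;=\; -\sum_{i \in \fl} q_i (\M\val_{(t)})_i,
\]
and now I would let $t \to \infty$: the right-hand side stays bounded because the margins on $\fl$ do, while the left-hand side diverges to $+\infty$ because all $p_i$ are nonnegative, at least one is strictly positive, and every margin on $\zl$ tends to $+\infty$. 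This contradiction forces the first alternative, producing the desired $\alf$.

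The degenerate cases are immediate: if $\zl = \emptyset$ take $\alf = \vzero$, and if $\fl = \emptyset$ the argument reduces to classical Gordan. The main obstacle I anticipate is largely bookkeeping --- selecting the precise mixed-constraint version of the alternatives theorem and verifying that admissibility really does trap the margins on $\fl$ inside a compact set. Once the dual certificate $(p, q)$ is in hand, the rest is just the divergence/boundedness dichotomy established in the first step, so the proof reduces essentially to a one-line limit calculation after the duality setup.
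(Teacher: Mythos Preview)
Your proposal is correct and takes a genuinely different route from the paper. The paper argues by induction on $|X|$: it projects the $\val_{(t)}$ orthogonal to $\ker\M$, extracts a limit point $\val'$ of the normalized sequence $\val_{(t)}/\norm{\val_{(t)}}$ on the unit sphere, and shows by continuity that $\val'$ has nonnegative margins everywhere, zero margins on $X\setminus\zl$, and strictly positive margin on some nonempty $S\subseteq\zl$. It then recurses on $X\setminus S$ to handle the remaining examples in $\zl$, and sums the two pieces. Your argument instead black-boxes a Motzkin/Gordan-type alternative and kills the dual certificate $(p,q)$ in one shot with the divergence-versus-boundedness dichotomy. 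Your approach is shorter and more transparently exposes the underlying linear-programming structure; the paper's is self-contained and constructive, building $\alf$ layer by layer without appealing to an external duality theorem. Both rely on admissibility in the same place: to trap the margins on $\fl$ from below (you need this so that the $q$-side stays bounded; the paper needs it so that the limit point $\val'$ has nonnegative margins). One small remark: in your LHS divergence step you do not actually need the $-\ln m$ lower bound on the $\zl$-margins, since every $p_i\geq 0$ and every $(\M\val_{(t)})_i\to+\infty$ already forces the sum to diverge; admissibility is only needed on the $\fl$ side.
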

\begin{proof}
  Since the $\val_{(t)}$ achieve arbitrarily large positive margins on
  $\zl$, $\norm{\val_{(t)}}$ will be unbounded, and it will be hard to
  extract a useful single solution out of them.  On the other hand,
  the rescaled combinations $\val_{(t)}/\norm{\val_{(t)}}$ lie on a
  compact set, and therefore have a limit point, which might have
  useful properties. We formalize this next.

  We prove the statement of the lemma by induction on the total number
  of training examples $|X|$.  If $X$ is empty, then the lemma holds
  vacuously for any $\alf$. Assume inductively for all $X$ of size
  less than $m>0$, and consider $X$ of size $m$. Since translating a
  vector along the null space of $\M$, $\ker \M = \set{\vx: \M\vx =
    \vzero}$, has no effect on the margins produced by the vector,
  assume without loss of generality that the $\val_{(t)}$'s are
  orthogonal to $\ker \M$. Also, since the margins produced on the
  zero loss set are unbounded, so are the norms of
  $\val_{(t)}$. Therefore assume (by picking a subsequence and
  relabeling if necessary) that $\norm{\val_{(t)}} > t$. Let $\val'$
  be a limit point of the sequence $\val_{(t)} / \norm{\val_{(t)}}$, a
  unit vector that is also orthogonal to the null-space. Then firstly
  $\val'$ achieves non-negative margin on every example; otherwise by
  continuity for some extremely large $t$, the margin of
  $\val_{(t)}/\norm{\val_{(t)}}$ on that example is also negative and
  bounded away from zero, and therefore $\val_{(t)}$'s loss is more
  than $m$, a contradiction to admissibility. Secondly, the margin of
  $\val'$ on each example in $X\setminus \zl$ is zero; otherwise, by
  continuity, for arbitrarily large $t$ the margin of
  $\val_{(t)}/\norm{\val_{(t)}}$ on an example in $X\setminus \zl$ is
  positive and bounded away from zero, and hence that example attains
  arbitrarily small loss in the sequence, a contradiction to
  \eqref{rate:subseq:eqn}. Finally, if $\val'$ achieves zero margin
  everywhere in $\zl$, then $\val'$, being orthogonal to the null-space,
  must be $\vzero$, a contradiction since $\val'$ is a unit
  vector. Therefore $\val'$ must achieve positive margin on some
  non-empty subset $S$ of $\zl$, and zero margins on every other
  example.

  Next we use induction on the reduced set of examples $X'=X\setminus
  S$. Since $S$ is non-empty, $|X'| < m$. Further, using the same
  sequence $\val_{(t)}$, the zero-loss and finite-loss sets,
  restricted to $X'$, are $\zl' = \zl\setminus S$ and $(X\setminus
  \zl)\setminus S =
  X\setminus \zl$ (since $S\subseteq \zl$) $= X'\setminus \zl'$. By the
  inductive hypothesis, there exists some $\val''$ which achieves
  positive margins on $\zl'$, and zero margins on $X'\setminus \zl' =
  X\setminus \zl$. Therefore, by setting $\alf = \val' + c\val''$ for a
  large enough $c$, we can achieve the desired properties.
\end{proof}
Applying Lemma~\ref{rate:basesol:lem} to the sequence $\vals_{(t)}$
yields some convex combination $\alf$ having margin at least $\gamma >
0$ (for some $\gamma$) on $\zl$ and zero margin on its complement,
proving Item~\ref{rate:one:dec} of the decomposition lemma. The next
lemma proves Item~\ref{rate:two:dec}.
\begin{lemma}
  \label{rate:finsolAcomp:lem}
  The optimal loss considering only
  examples within $\fl$ is achieved by some finite combination
  $\vals$.
\end{lemma}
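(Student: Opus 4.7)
The plan is to reduce the problem to minimizing $L_\fl(\vlam) \eqdef \sum_{i \in \fl} \loss{\vlam}{i}$ over a quotient space where this function is coercive, and then appeal to continuity to extract a minimizer. Concretely, define the subspace
\[
V \eqdef \set{\vu \in \R^N : (\M\vu)_i = 0 \text{ for all } i \in \fl},
\]
i.e., the kernel of the submatrix $\M_\fl$. The function $L_\fl$ is invariant under translation by $V$, so it suffices to show that $L_\fl$ restricted to the orthogonal complement $V^{\perp}$ attains its infimum.

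The heart of the argument is the following claim: for every nonzero $\vu \in V^{\perp}$, there exists some $i \in \fl$ with $(\M\vu)_i < 0$. I would prove this by contradiction. Suppose $\vu \in V^{\perp}\setminus\set{\vzero}$ satisfies $(\M\vu)_i \geq 0$ for all $i \in \fl$. Since $\vu \notin V$, we have $\M_\fl \vu \neq \vzero$, so some $(\M\vu)_{i^*} > 0$ with $i^* \in \fl$. I would then construct a combination contradicting the optimality of $K = \inf_{\vlam}\loss{\vlam}{X}$. Specifically, consider the perturbation $\val'_{(t)} \eqdef \vals_{(t)} + c\vu + c'\alf$ for parameters $c,c' > 0$ to be chosen. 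Using that $(\M\alf)_i = 0$ on $\fl$ and $(\M\alf)_i \geq \gamma > 0$ on $\zl$, one computes:
\begin{itemize}
\item On $\fl$: $\loss{\val'_{(t)}}{\fl} \leq \loss{\vals_{(t)}}{\fl} - \loss{\vals_{(t)}}{i^*}\bigl(1 - e^{-c(\M\vu)_{i^*}}\bigr)$, and $\loss{\vals_{(t)}}{i^*}$ is bounded below by a fixed positive constant (since $i^*\in \fl$). Thus for $c$ large, $L_\fl$ drops by at least some fixed $\eta > 0$.
\item On $\zl$: $\loss{\val'_{(t)}}{\zl} \leq \loss{\vals_{(t)}}{\zl}\, e^{c\norm{\M\vu}_\infty}\, e^{-c'\gamma}$, which can be driven below $\eta/2$ by taking $t$ large (so $\loss{\vals_{(t)}}{\zl} \to 0$) and then $c'$ large.
\end{itemize}
Adding these, $\loss{\val'_{(t)}}{X} < K$ for some $t$, contradicting optimality. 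This establishes the claim.

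With the claim in hand, coercivity of $L_\fl$ on $V^\perp$ follows easily. If $\vlam_k \in V^\perp$ with $\norm{\vlam_k}\to\infty$, pass to a subsequence so that $\vlam_k/\norm{\vlam_k}$ converges to some unit vector $\vu \in V^\perp$. By the claim, $(\M\vu)_{i_0} < 0$ for some $i_0 \in \fl$, and hence $(\M\vlam_k)_{i_0} \to -\infty$ along the subsequence, forcing $L_\fl(\vlam_k)\to\infty$. Thus every sublevel set $\set{\vlam \in V^\perp : L_\fl(\vlam) \leq c}$ is bounded, and by continuity of $L_\fl$, also closed, hence compact. The continuous function $L_\fl$ attains its minimum on any nonempty sublevel set, yielding the desired finite minimizer $\vals \in V^\perp$.

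The main obstacle is the claim itself, since it is precisely the place where the ``maximality'' built into the decomposition (that $\zl$ already absorbs every example that can be driven to zero loss while staying on an optimal sequence) has to be converted into a quantitative statement. The bookkeeping in choosing $c$, $c'$, and $t$ in the right order is delicate because the perturbation in direction $\vu$ may raise the loss on $\zl$, which must then be suppressed by $\alf$ without undoing the progress on $\fl$ — but since $\alf$ has zero margin on $\fl$ and strictly positive margin on $\zl$, these two moves can be decoupled, making the argument go through.
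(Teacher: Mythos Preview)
Your argument is correct, but it takes a genuinely different route from the paper's proof. The paper works directly with the minimizing sequence $\vals_{(t)}$: it projects each $\vals_{(t)}$ onto $(\ker\M_\fl)^\perp$, observes that the $\fl$-losses along this projected sequence still converge to $K$, and then uses the defining property of $\fl$ (that $\inf_t\min_{i\in\fl}\loss{\vals_{(t)}}{i}>0$) together with the smallest positive singular value of $\M_\fl$ to conclude that the projected sequence is bounded, hence has a finite limit point achieving loss $K$. There is no separate ``no improving direction'' claim and no coercivity argument; boundedness is read off directly from the sequence.

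Your approach instead proves the structural fact that no nonzero $\vu\in V^\perp$ has $\M_\fl\vu\ge\vzero$, via a perturbation $\vals_{(t)}+c\vu+c'\alf$ that would beat $K$. This is essentially the content of what the paper later derives (in the proof of Lemma~\ref{rate:sep:lem}) \emph{from} Item~\ref{rate:three:dec} of the decomposition lemma, which in turn depends on the present lemma. So you are front-loading that insight and proving it independently, then obtaining the minimizer via coercivity rather than by extracting a limit from the specific sequence. The paper's argument is shorter and more direct; yours yields a stronger intermediate statement that is needed elsewhere anyway and makes the existence of a minimizer follow from a clean structural property of $\M_\fl$ rather than from properties of one particular minimizing sequence. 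One small point: in your contradiction step you implicitly use $\loss{\vals_{(t)}}{\fl}\to K$ (so that the $\eta$-drop actually pushes the total loss below $K$); this is immediate from $\loss{\vals_{(t)}}{X}\to K$ and $\loss{\vals_{(t)}}{\zl}\to 0$, but it is worth stating explicitly.
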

\begin{proof}
  The existence of $\alf$ with properties as in
  Lemma~\ref{rate:basesol:lem} implies that the optimal loss is the
  same whether considering all the examples, or just examples in $\fl$.
  Therefore it suffices to show the existence of finite $\vals$ that
  achieves loss $K$ on $\fl$, that is, $\loss{\vals}{\fl}= K$.

  Recall $\M_\fl$ denotes the matrix $\M$ restricted to the rows
  corresponding to examples in $\fl$.  Let $\ker \M_\fl = \set{\vx: \M_\fl\vx
    = 0}$ be the null-space of $\M_\fl$. Let $\val^{(t)}$ be the
  projection of $\vals_{(t)}$ onto the orthogonal subspace of $\ker
  \M_\fl$. Then the losses $\loss{\val^{(t)}}{\fl} = \loss{\vals_{(t)}}{\fl}$
  converge to the optimal loss $K$.  If $\M_\fl$ is identically zero,
  then each $\val^{(t)} = \vzero$, and then $\vals=\vzero$ has loss
  $K$ on $\fl$. Otherwise, let $\lam^2$ be the smallest positive
  eigenvalue of $\M_\fl^T\M_\fl$. Then $\norm{\M\val^{(t)}} \geq
  \lam\norm{\val^{(t)}}$. By the definition of finite margin set,
  $\inf_{t\to\infty}\min_{i\in \fl}\loss{\val^{(t)}}{i} =
  \inf_{t\to\infty}\min_{i\in \fl}\loss{\vals_{(t)}}{i} >
  0$. Therefore, the norms of the margin vectors
  $\norm{\M\val^{(t)}}$, and hence that of $\val^{(t)}$,
  are bounded. Therefore the $\val^{(t)}$'s have a (finite) limit point
  $\vals$ that must have loss $K$ over $\fl$.
\end{proof}
As a corollary, we prove Item~\ref{rate:three:dec}.
\begin{lemma}
  \label{rate:solbound:lem}
  There is a constant $\mu_{\max} < \infty$, such that for any
  combination $\val$ that achieves bounded loss on the finite-margin
  set, $\loss{\val}{\fl} \leq m$, the margin $(\M\val)_i$ for any example
  $i$ in $\fl$ lies in the bounded interval $[-\ln m, \mu_{\max}]$ .
\end{lemma}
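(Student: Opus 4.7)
The lower bound is immediate: if some margin $(\M\val)_i$ for $i\in\fl$ were below $-\ln m$, then the single term $e^{-(\M\val)_i}$ would exceed $m$, forcing $\loss{\val}{\fl}>m$ and contradicting the hypothesis. So the entire content of the lemma is in the existence of a finite upper bound $\mu_{\max}$ on the margins over $\fl$.

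For the upper bound, the plan is to argue by contradiction, using the finite minimizer $\vals$ furnished by Lemma~\ref{rate:finsolAcomp:lem}. Suppose, toward a contradiction, that there is a sequence $\val^{(k)}$ of combinations with $\loss{\val^{(k)}}{\fl}\le m$ but with some coordinate of $\M_\fl\val^{(k)}$ tending to $+\infty$. Replacing each $\val^{(k)}$ by its orthogonal projection away from $\ker\M_\fl$ does not change $\M_\fl\val^{(k)}$ (and so does not change the losses over $\fl$), so we may assume each $\val^{(k)}$ lies in the orthogonal complement of $\ker\M_\fl$.

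Since the smallest nonzero eigenvalue $\lmin^2$ of $\M_\fl^T\M_\fl$ gives $\norm{\M_\fl\val^{(k)}}\ge \lmin\norm{\val^{(k)}}$ on this subspace, and the left side is unbounded, so is $\norm{\val^{(k)}}$. Pass to a subsequence along which $\vu^{(k)}=\val^{(k)}/\norm{\val^{(k)}}$ converges to a unit vector $\vu$; $\vu$ is also orthogonal to $\ker\M_\fl$, hence $\M_\fl\vu\ne\vzero$. Moreover, every coordinate of $\M_\fl\val^{(k)}$ is at least $-\ln m$ by the loss bound, so dividing by $\norm{\val^{(k)}}\to\infty$ and passing to the limit yields $(\M_\fl\vu)_i\ge 0$ for every $i\in\fl$. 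Combined with $\M_\fl\vu\ne\vzero$, some coordinate $(\M_\fl\vu)_{i_0}$ is strictly positive.

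Now consider, for $c>0$, the perturbation $\vals+c\vu$ of the $\fl$-minimizer $\vals$. Its loss on $\fl$ is
\[
\loss{\vals+c\vu}{\fl}=\sum_{i\in\fl}\loss{\vals}{i}\,e^{-c(\M_\fl\vu)_i}\le\sum_{i\in\fl}\loss{\vals}{i}=K,
\]
with strict inequality because $\loss{\vals}{i_0}>0$ and $(\M_\fl\vu)_{i_0}>0$. This contradicts the optimality of $\vals$ on $\fl$ established in Lemma~\ref{rate:finsolAcomp:lem}, completing the proof. The main delicate point is showing that the limit direction $\vu$ produces strictly positive margin somewhere on $\fl$; this is precisely where the orthogonality to $\ker\M_\fl$ (which forces $\M_\fl\vu\ne\vzero$) combines with the one-sided margin bound $\ge -\ln m$ (which ensures nonnegativity in the limit) to force a strict improvement and contradict optimality.
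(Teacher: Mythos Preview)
Your argument is correct and follows the same overall contradiction strategy as the paper, but it is more self-contained. The paper invokes Lemmas~\ref{rate:sol:lem} and~\ref{rate:basesol:lem} as black boxes: from a sequence with unbounded margin on some $x\in\fl$ it first extracts (via~\ref{rate:sol:lem}) a subsequence with a well-defined zero-loss subset $S\subseteq\fl$, then (via~\ref{rate:basesol:lem}) a direction $\vlamd$ with strictly positive margins on $S$ and \emph{exactly zero} margins on $\fl\setminus S$, and finally observes that $\vals+\infty\cdot\vlamd$ beats $\vals$. You instead inline a streamlined version of the compactness argument from~\ref{rate:basesol:lem}: normalize, pass to a limit $\vu$, and use the one-sided bound $-\ln m$ to get $(\M_\fl\vu)_i\ge 0$ everywhere on $\fl$ with strict inequality somewhere. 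Nonnegativity (rather than the paper's zero-on-the-complement) is already enough to force $\loss{\vals+c\vu}{\fl}<K$, so your shortcut loses nothing. The benefit of your route is that it avoids the subsequence-extraction machinery of~\ref{rate:sol:lem}; the paper's route has the advantage of reusing lemmas it has already proved.

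One small slip: from $\norm{\M_\fl\val^{(k)}}\ge\lmin\norm{\val^{(k)}}$ you cannot conclude that $\norm{\val^{(k)}}$ is unbounded when the left side is---that inequality goes the wrong way. What you need is the trivial bound $\norm{\M_\fl\val^{(k)}}\le\norm{\M_\fl}_{\mathrm{op}}\norm{\val^{(k)}}$, which immediately gives the desired unboundedness. The $\lmin$ inequality is not needed anywhere in your proof.
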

\begin{proof}
  Since the loss $\loss{\val}{\fl}$ is at most $m$, therefore no
  margin may be less than $-\ln m$.  To prove a finite upper bound on the
  margins, we argue by contradiction. 
  Suppose  arbitrarily large margins are
  producible by bounded loss vectors, that is arbitrarily large
  elements are present in the set $\set{\enc{\M\val}_i:
    \loss{\val}{\fl}\leq m, 1 \leq i \leq m}$.
  Then for some fixed example $x\in \fl$  
  there exists a sequence of combinations of weak classifiers, whose
  $t\th$ element achieves more than margin $t$ on $x$ but has loss at
  most $m$ on $\fl$. Applying Lemma~\ref{rate:sol:lem} we can find a
  subsequence $\vlam^{(t)}$ whose tail achieves vanishingly small loss on some
  non-empty subset $S$ of $\fl$ containing $x$, and bounded margins in
  $\fl\setminus S$. Applying Lemma~\ref{rate:basesol:lem} to
  $\vlam^{(t)}$ we get some convex combination $\vlamd$ which has
  positive margins on $S$ and zero margin on $\fl \setminus S$. Let
  $\vals$ be as in Lemma~\ref{rate:finsolAcomp:lem}, a finite
  combination achieving the optimal loss on $\fl$. Then $\vals +
  \infty \cdot \vlamd$ achieves the same loss on every example in
  $\fl\setminus S$ as the optimal solution $\vals$, but zero loss for
  examples in $S$. This solution is strictly better than $\vals$ on
  $\fl$, a contradiction to the optimality of $\vals$. Therefore our
  assumption is false, and some finite  upper bound
  $\mu_{\max}$ on the margins $(\M\val)_i$ of vectors satisfying
  $\loss{\val}{\fl} \leq m$ exists.
\end{proof}
}
\longv{
\subsection{Investigating the constants}
\label{rate:constants:sec}

In this section, we try to estimate the constant $C$ in
Theorem~\ref{rate:rate:thm}. We show that it can be arbitrarily large
for adversarial feature matrices with real entries (corresponding to
confidence rated weak hypotheses), but has an upper-bound doubly
exponential in the number of examples when the feature matrix has
$\set{-1,0,+1}$ entries only. We also show that this doubly exponential
bound cannot be improved without significantly changing the proof in
the previous section. 

By inspecting the proofs, we can bound the constant in 
Theorem~\ref{rate:rate:thm} as follows.
 \begin{corollary}
   \label{rate:rate:cor}
   The constant $C$ in Theorem~\ref{rate:rate:thm} that emerges from
   the proofs is
   \[
   C = \frac{32m^3Ne^{\mu_{\max}}}{\gamma^2\lambda^2_{\min}},
   \]
   where $m$ is the number of examples, $N$ is the number of
   hypotheses, $\gamma$ and $\mu_{\max}$ are as given by
   Items~\ref{rate:one:dec} and \ref{rate:three:dec} of the
   decomposition lemma, and $\lmin^2$ is the
   smallest positive eigenvalue of $\M_F^T\M_F$ ($\M_F$ is the feature
   matrix restricted to the rows belonging to the finite margin set
   $F$).
 \end{corollary}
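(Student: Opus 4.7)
The statement is purely one of bookkeeping: trace the constants appearing in the chain Lemma~\ref{rate:edgestep:lem} $\to$ Lemma~\ref{rate:taylor:lem} $\to$ Lemma~\ref{rate:finstep:lem} $\to$ Theorem~\ref{rate:rate:thm} and collect them into a single expression $C$. The structure of the argument mirrors that of the theorem, where two distinct sources of per-round progress compete: a ``zero-loss'' source governed by $\gamma$ from Item~\ref{rate:one:dec} of the decomposition lemma, and a ``finite-margin'' source governed by the Hessian-based estimate from Lemma~\ref{rate:taylor:lem}.

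First I would isolate the constant $C_0 = 2\lmin^2/(Nm\,e^{\mu_{\max}})$ from the proof of Lemma~\ref{rate:taylor:lem}; this is where the $\lmin^2$ and $e^{-\mu_{\max}}$ factors originate. The $\lmin^2$ comes from lower-bounding $\|\M_F\val\|^2$ in terms of $\|\val\|^2$ via the smallest positive eigenvalue of $\M_F^T\M_F$, while $e^{-\mu_{\max}}$ comes from applying Item~\ref{rate:three:dec} of the decomposition lemma to lower bound per-example losses of admissible combinations. Next, I would substitute into the definitions $C_1 = \min\{1/2,\,(1/4)\sqrt{C_0/(2m)}\}$ and $C_2 = C_0/(4m)$ from Lemma~\ref{rate:finstep:lem}.

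Finally, I would revisit the proof of Theorem~\ref{rate:rate:thm} where $\Delta\theta \geq C_3\theta^2$ is established with $C_3$ equal to the minimum over the two cases. In Case~1 (the finite-margin regime) the contribution is $C_2/m$, and in Case~2 the contribution is $(\gamma C_1)^2/(2\loss{\vlam}{X})$, where one should keep the factor $1/\loss{\vlam}{X} \geq 1/m$ rather than squaring a bound like $(\gamma C_1/m)^2$ and losing an extra factor of $m$. Taking $C = 1/C_3$ and simplifying then gives an expression dominated by the stated $32m^3Ne^{\mu_{\max}}/(\gamma^2\lmin^2)$.

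The only real obstacle is arithmetic bookkeeping: ensuring the $\min$ in $C_1$ is resolved in the regime $C_1=(1/4)\sqrt{C_0/(2m)}$ (which dominates once $m$ or $N$ is moderately large) and that the bound $\loss{\vlam}{X}\leq m$ is invoked exactly once in Case~2, so the exponent on $m$ is $3$ rather than $4$. Everything else is routine substitution.
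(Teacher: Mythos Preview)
Your proposal is correct and is essentially what the paper intends; the corollary is stated there without proof, merely as the outcome of ``inspecting the proofs,'' so your bookkeeping \emph{is} the argument. In particular, your observation about Case~2 is on point: the proof of Theorem~\ref{rate:rate:thm} as written bounds $\delta \geq (\gamma C_1/m)\theta$ first and then uses $\loss{\vlam}{X}\geq K\geq 1$, which would yield $(\gamma C_1)^2/(2m^2)$ and hence an $m^4$ in $C$; to recover the stated $m^3$ one must instead keep $\delta \geq \gamma C_1\theta/\loss{\vlam}{X}$ and cancel one factor of $\loss{\vlam}{X}$ against the prefactor in $\dth \geq \loss{\vlam}{X}\delta^2/2$, exactly as you describe. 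Your check that $C_1=(1/4)\sqrt{C_0/(2m)}$ is the active branch of the $\min$ is also correct, since $C_0 = 2\lmin^2/(Nm\,e^{\mu_{\max}}) \leq 2$ always (the entries of $\M_F$ lie in $[-1,1]$, so $\lmin^2 \leq mN$), making $(1/4)\sqrt{C_0/(2m)}\leq 1/2$.
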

 Our bound on $C$ will be obtained by in turn bounding the quantities
 $\lmin^{-1},\gamma^{-1},\mu_{\max}$. These are strongly related to
 the singular values of the feature matrix $\M$, and in general
 cannot be easily measured. In fact, when $\M$ has real entries, we
 have already seen in Section~\ref{rate:B_lbnd:sec} that the rate can
 be arbitrarily large, implying these parameters can have very large
 values.
 Even when the matrix $\M$ has integer entries
 (that is, $-1, 0, +1$), the next lemma shows that these quantities can be
 exponential in the number of examples.
 \begin{lemma}
   \label{rate:M_int:lem}
   There are examples of feature matrices with $-1,0,+1$ entries and
   at most $m$ rows or columns (where $m>10$) for which the quantities
   $\gamma^{-1}, \lambda^{-1}$ and
   $\mu_{\max}$ are at least $\Omega(2^m/m)$. 
 \end{lemma}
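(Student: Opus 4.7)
I will exhibit three separate feature matrices with $\{-1,0,+1\}$ entries, each satisfying the dimension constraint, one for each parameter. All three constructions build on the upper-triangular submatrix $U$ of the matrix in Figure~\ref{rate:matrix:fig}, i.e., the $(m-1)\times(m-1)$ matrix with $+1$ on the diagonal and $-1$ strictly above. A direct computation shows $(U^{-1})_{ij}=2^{j-i-1}$ for $j>i$, so that $\|U^{-1}\|_2=\Omega(2^m)$ and $\sigma_{\min}(U)=O(2^{-m})$.

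For $\gamma^{-1}$, take exactly the matrix in Figure~\ref{rate:matrix:fig}. As already established in the proof of Theorem~\ref{rate:B_big:thm}, rows $0$ and $1$ are complementary and form the finite-margin set $\fl=\{0,1\}$, while rows $2,\dots,m-1$ can be driven to arbitrarily large margin and form the zero-loss set $\zl$. Any unit-$\ell_1$-norm $\alf$ with $(\M\alf)_i\ge\gamma$ on $\zl$ and $(\M\alf)_i=0$ on $\fl$ must, by the cascading recursion $\alf_i\ge\sum_{j>i}\alf_j+\gamma$, satisfy $\alf_i\ge 2^{m-1-i}\gamma$ for $i\ge 2$; complementarity of rows $0$ and $1$ forces $\alf_1=\sum_{j\ge 2}\alf_j$. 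Summing all coordinates yields $\|\alf\|_1\ge 2(2^{m-2}-1)\gamma$, so $\gamma^{-1}=\Omega(2^m)$.

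For $\lmin^{-1}$, form the $2m\times m$ matrix whose rows are the rows of $U$ together with the rows of $-U$. Every example now has an exact complementary partner, so the admissibility bound $\sum_{i\in\fl}e^{-(\M\val)_i}\le 2m$ forces every margin to be bounded in absolute value, placing every example in $\fl$. Hence $\M_F^T\M_F=2U^TU$, whose smallest positive eigenvalue is $2\sigma_{\min}(U)^2=\Omega(2^{-2m})$, so $\lmin^{-1}=\Omega(2^m)$.

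For $\mu_{\max}$, use the $m\times(m-1)$ matrix consisting of the rows of $U$ augmented by a single blocking row $-\ve_1^\top$. The blocking row forces $\val_1\le\ln m$. When the $m-1$ rows of $U$ all sit near margin $-c$, admissibility gives $(m-1)e^c+e^{\ln m}\le m$ hence $c\le O(1/m)$. Unrolling the per-row constraints $\val_i-\sum_{j>i}\val_j\ge-c$ from the bottom of $U$ gives $\val_j\ge-\Omega(2^{m-j}/m)$, so that maximizing the margin $-\val_1$ on the blocking row subject to these constraints yields $\mu_{\max}=\Omega(2^m/m)$. A direct verification shows every row of this augmented matrix lies in $\fl$ (none can be driven to infinite margin while the total loss remains bounded), so $\mu_{\max}$ is indeed the constant from Item~\ref{rate:three:dec} of the decomposition lemma.

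The main technical obstacle is the $\mu_{\max}$ case: one must verify that the per-row recursive bounds on $\val_j$ and the single global loss budget can simultaneously be made tight by one extreme configuration, since the per-row constraints in isolation would permit arbitrarily negative $\val_j$'s whereas the $1/m$ slack in the global bound is precisely what controls them. Once this balancing is carried out, the other two cases reduce to the bookkeeping already done in the proofs of Theorem~\ref{rate:B_big:thm} and the closed-form expression for $U^{-1}$.
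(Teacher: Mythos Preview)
Your constructions are essentially correct and share the paper's core device: the $(m-1)\times(m-1)$ upper- (or lower-) triangular matrix whose inverse has entries growing like $2^j$. The paper proceeds more economically, with only two matrices. For $\gamma^{-1}$ and $\lmin^{-1}$ together it takes the bare invertible upper-triangular $U$, observes that $U\vy=\vone$ with $\vy=(2^{m-1},\dots,1)$ so the zero-loss set is all of $X$, and reads off both $\lmin^{-1}\geq\|\vy\|/\|\vone\|\geq 2^m/m$ and $\gamma^{-1}\geq\|\vy\|_1$ from that single vector. (The paper is arguably loose here: since $F=\emptyset$ for that matrix, $\lmin$ as defined via $\M_F$ is vacuous; your $\pm U$ doubling, which forces $F=X$ by pairing every row with its complement, is a cleaner way to realize the same singular-value blowup inside a genuine $\M_F$.) For $\mu_{\max}$ the paper uses a lower-triangular block capped by an all-$(+1)$ last row, shows the rows satisfy a positive linear relation $\vy^T\M=\vzero$ so that $F=X$, and then \emph{exhibits} the witness $\eps\vx$ with $\vx=(1,2,\dots,2^{m-2})$ and $\eps=1/(2m)$, checking directly that its loss is at most $m$ while its last margin is $(2^{m-1}-1)/(2m)$.

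That last point is where your write-up has a real gap. Your $\mu_{\max}$ paragraph runs the logic backwards: from admissibility you derive \emph{necessary} constraints $\val_j\geq -O(2^{m-j}/m)$, which give an \emph{upper} bound on $-\val_1$, not the lower bound you need. (The displayed inequality $(m-1)e^c+e^{\ln m}\leq m$ is also impossible as written, since $e^{\ln m}=m$; presumably the second term was meant to be the tiny contribution $e^{-2^{m-2}c}$ from the blocking row.) What is actually required, and what you yourself flag as the ``main technical obstacle,'' is simply to write down the explicit choice $\val_j=-2^{m-1-j}c$ with $c=1/(2m)$, verify that each $U$-row then has margin exactly $-c$ so the total loss is $(m-1)e^{1/(2m)}+e^{-2^{m-2}/(2m)}\leq m$, and read off the blocking-row margin $-\val_1=2^{m-2}/(2m)=\Omega(2^m/m)$. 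This is a two-line computation once set up in the forward direction, exactly as the paper does for its analogous matrix; no balancing of competing constraints is needed.
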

 \begin{proof}
   We first show the bounds for $\gamma$ and $\lambda$. Let $\M$ be
   an $m\times m$ upper triangular matrix with $+1$ on the diagonal,
   and $-1$ above the diagonal. Let $\vy =
   (2^{m-1},2^{m-2},\ldots,1)^T$, and $\vb = (1,1,\ldots,1)^T$. Then
   $\M\vy = \vb$, although the $\vy$ has much bigger norm than $\vb$:
   $\norm{\vy} \geq 2^{m-1}$, while $\norm{\vb} = m$. Since $\M$ is
   invertible, by the definition of $\lmin$, we have $\norm{\M\vy}
   \geq \lmin \norm{\vy}$, so that $\lmin^{-1} \geq
   \norm{\vy}/\norm{\M\vy} \geq 2^m/m$. Next, note that $\vy$ produces
   all positive margins $\vb$, and hence the zero-loss set consists of
   all the examples.  In particular, if $\alf$ be as in
   Item~\ref{rate:one:dec} of the decomposition lemma, then the vector
   $\gamma^{-1}\alf$ achieves more than 1 margin on each example:
   $\M(\gamma^{-1}\alf) \geq \vb$. On the other hand, our matrix is
   very similar to the one in Lemma~\ref{rate:B_big:thm}, and the same
   arguments in the proof of that lemma can be used to show that if
   for some $\vx$ we have $(\M\vx) \geq \vb$, then $\vx \geq \vy$.  This
   implies that $\gamma^{-1}\norm{\alf}_1 \geq \norm{\vy}_1 =
   (2^m-1)$. Since $\alf$ has unit $\ell_1$-norm, the bound on
   $\gamma^{-1}$ follows too.
   
   Next we provide an example showing $\mu_{\max}$ can be
   $\Omega(2^m/m)$. Consider an $m \times (m-1)$ matrix $\M$. The
   bottom row of $\M$ is all $+1$. The upper $(m-1)\times (m-1)$
   submatrix of $\M$ is a lower triangular matrix with $-1$ on the
   diagonal and $+1$ below the diagonal. Observe that if $\vy^T =
   (2^{m-2}, 2^{m-3}, \ldots, 1, 1)$, then $\vy^T\M =
   \vzero$. Therefore, for any vector $\vx$, the inner product of the
   margins $\M\vx$ with $\vy$ is zero: $\vy^TM\vx = 0$. This implies
   that achieving positive margin on any example forces some other
   example to receive negative margin. By Item~\ref{rate:one:dec} of
   the decomposition lemma, the 
   zero loss set in this dataset is empty, and all the examples belong
   to the finite loss set. Next, we choose a combination with at most
   $m$ loss that nevertheless achieves $\Omega(2^m/m)$ positive margin
   on some example. Let $\vx^T = (1,2,4,\ldots,2^{m-2})$. Then
   $(\M\vx)^T = (-1,-1,\ldots,-1,2^{m-1}-1)$. Then the margins using
   $\eps\vx$ are $(-\eps,\ldots,-\eps, \eps(2^{m-1}-1))$ with total
   loss $(m-1)e^\eps + e^{\eps(1-2^{m-1})}$. Choose
   $\eps=1/(2m) \leq 1$, so that the loss on examples corresponding
   to the first $m-1$ rows is at most $e^\eps \leq 1+2\eps = 1+1/m$,
   where the first inequality holds since $\eps \in [0,1]$. For
   $m>10$, the choice of $\eps$ guarantees $1/(2m) = \eps \geq (\ln
   m)/(2^{m-1}-1)$, so that the loss on the example corresponding to
   the bottom most row is $e^{-\eps(2^{m-1}-1)} \leq e^{-\ln m} =
   1/m$. Therefore the net loss of $\eps\vx$ is at most $(m-1)(1+1/m)
   + 1/m = m$. On the other hand the margin on the example
   corresponding to the last row is $\eps(2^{m-1}-1) =
   (2^{m-1}-1)/(2m) = \Omega(2^m/m)$.
 \end{proof}
 The above result implies any bound on $C$ derived from
 Corollary~\ref{rate:rate:cor} will be at least $2^{\Omega(2^{m}/m)}$
 in the worst case. This does not imply that the best bound one can
 hope to prove is doubly exponential, only that our techniques in the
 previous section do not admit anything better. We next show that
 the bounds in Lemma~\ref{rate:M_int:lem} are nearly the worst
 possible.
\begin{lemma}
  \label{rate:const_bnd:lem}
  Suppose each entry of $\M$ is $-1,0$ or $+1$. Then each of the
  quantities $\lmin^{-1}, \gamma^{-1}$ and $\mu_{\max}$ are at most
  $2^{O(m\ln m)}$.
\end{lemma}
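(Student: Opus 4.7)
The plan is to bound each of $\lmin^{-1}$, $\gamma^{-1}$, and $\mu_{\max}$ separately, in each case reducing to an integer-linear-algebra problem of size $O(m)\times O(m)$ with $\{-1,0,+1\}$ entries and then invoking Hadamard's inequality together with Cramer's rule. The main subtlety throughout is that the number of columns $N$ of $\M$ may be as large as $3^m$, so arguments that scale naively with $N$ would only give a $2^{O(m^2)}$ bound; the essential step in each case is a reduction to an $m$-dimensional subproblem.

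For $\lmin^{-1}$, I would use the Cauchy interlacing property for singular values. Since $\M_\fl$ has rank $r\le m$, it admits an $r\times r$ nonsingular submatrix $A$ with entries in $\{-1,0,+1\}$, so $|\det A|\ge 1$. Combined with $\sigma_i(A)\le\norm{A}_F\le r$, this forces $\sigma_{\min}(A)\ge r^{-(r-1)}\ge m^{-m}$. Cauchy interlacing then gives $\lmin=\sigma_r(\M_\fl)\ge\sigma_r(A)=\sigma_{\min}(A)$, so $\lmin^{-1}\le m^m = 2^{O(m\log m)}$.

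For $\gamma^{-1}$ and $\mu_{\max}$ I would set up linear programs with $\{-1,0,+1\}$ coefficients. For $\gamma$, the LP maximizes $t$ subject to $(\M\alf)_i\ge t$ for $i\in\zl$, $(\M\alf)_i=0$ for $i\in\fl$, and $\norm{\alf}_1\le 1$. For $\mu_{\max}$, the LP for each fixed $i^*\in\fl$ maximizes $(\M\val)_{i^*}$ subject to $(\M\val)_i\ge -\ln m$ for $i\in\fl$, a relaxation of $\loss{\val}{\fl}\le m$ whose boundedness is guaranteed by Lemma~\ref{rate:solbound:lem}. After splitting free variables into positive/negative parts and introducing slacks, each LP is in standard form with only $O(m)$ equality constraints. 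The basic-feasible-solution theorem then yields an optimum with at most $O(m)$ positive variables, determined by an invertible $O(m)\times O(m)$ subsystem of the constraint matrix. By Cramer's rule each variable is a ratio of $O(m)\times O(m)$ subdeterminants of a $\{-1,0,+1\}$ matrix, hence bounded in absolute value by $m^{O(m)}=2^{O(m\log m)}$ via Hadamard (and by $1$ from below for the denominator, which is a nonzero integer). This gives $\gamma\ge 2^{-O(m\log m)}$, and, using that $\val$ has $O(m)$ nonzero coordinates, $|(\M\val)_{i^*}|\le m\,\norm{\val}_\infty\le 2^{O(m\log m)}$.

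The main obstacle is precisely this $N$-independence. A naive Hadamard bound on the full matrix $\M_\fl^T\M_\fl$ or on an LP with $N$ variables would scale with $\log N$, giving at best a $2^{O(m^2)}$ bound. Extracting an $r\times r$ submatrix (for $\lmin$) and exploiting basic-feasible-solution sparsity (for $\gamma$ and $\mu_{\max}$) are the essential reductions; once they are made, the Cramer/Hadamard calculation is routine.
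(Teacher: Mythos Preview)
Your proposal is essentially correct and reaches the same $2^{O(m\log m)}$ bounds, but by a genuinely different route. For $\lmin^{-1}$ the paper does not invoke interlacing; instead it factors $\M_\fl = \A'[\vI\,|\,\B]$ through a column basis, shows $\norm{[\vI|\B]\vx}\ge\norm{\vx}$ on $(\ker\M_\fl)^\perp$, and then bounds $\norm{\A_k^{-1}}\le k!$ for a $k\times k$ invertible $\{-1,0,+1\}$ block via the adjugate formula. For $\gamma^{-1}$ and $\mu_{\max}$ the paper avoids LP vertex theory altogether, proving instead a bespoke sparsity result (Lemma~\ref{rate:ineq:lem}): any feasible system $\A\vx=\vb,\ \vx\ge\vzero$ with $\{-1,0,+1\}$ data admits a solution of norm at most $k\cdot k!$, where $k=\mathrm{rank}(\A)$. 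The $\gamma$ bound then follows by slacking the $\zl$-inequalities; the $\mu_{\max}$ bound goes through a dual object (Lemma~\ref{rate:sep:lem} produces $\vy>\vzero$ with $\M_\fl^T\vy=\vzero$, and one then solves $\A\vy'=-\vb$ with small $\vy'$) rather than your primal LP. Your BFS/Cramer route is more uniform and arguably cleaner; the paper's hand-rolled lemmas are more self-contained in that they make no appeal to LP structure theory.

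One gap to close in your $\mu_{\max}$ argument: Lemma~\ref{rate:solbound:lem} bounds margins only under the nonlinear constraint $\loss{\val}{\fl}\le m$, not under your linear relaxation $(\M\val)_i\ge -\ln m$, so it does not directly certify that your LP is bounded. You need one extra line: if the LP were unbounded, its recession cone would contain some $\vec{d}$ with $\M_\fl\vec{d}\ge\vzero$ and $(\M\vec{d})_{i^*}>0$; then $c\vec{d}$ satisfies $\loss{c\vec{d}}{\fl}\le|\fl|\le m$ for every $c>0$ while $(\M(c\vec{d}))_{i^*}\to\infty$, contradicting Lemma~\ref{rate:solbound:lem}. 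Also note that the right-hand side of that LP is $-\ln m$ rather than $\pm 1$, so Cramer's rule yields values of the form $\ln m$ times an integer ratio; the extra $\ln m$ factor is absorbed into $2^{O(m\log m)}$ but should be tracked.
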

The proof of Lemma~\ref{rate:const_bnd:lem} is rather technical, and
we defer it to the Appendix. Lemma~\ref{rate:const_bnd:lem} and
Corollary~\ref{rate:rate:cor} together imply a convergence rate of
$2^{2^{O(m\ln m)}}/\eps$ to the optimal loss for integer
matrices. This bound on $C$ is exponentially worse than the
$\Omega(2^m)$ lower bound on $C$ we saw in
Section~\ref{rate:B_lbnd:sec}, a price we pay for obtaining optimal
dependence on $\eps$. In the next section we will see how to obtain
$\poly(2^{m\ln m},\eps^{-1})$ bounds, although with a worse dependence
on $\eps$. We end this section by showing, just for completeness, how
a bound on the norm of $\vals$ as defined in Item~\ref{rate:two:dec}
of the decomposition lemma follows as a quick corollary to
Lemma~\ref{rate:const_bnd:lem}.
\begin{corollary}
  \label{rate:vals:cor}
  Suppose $\vals$ is as given by Item~\ref{rate:two:dec} of the
  decomposition lemma. When the feature matrix has only $-1,0,+1$
  entries, we may bound $\norm{\vals}_1 \leq 2^{O(m\ln m)}$.
\end{corollary}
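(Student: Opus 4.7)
The plan is to combine Items~\ref{rate:two:dec} and \ref{rate:three:dec} of the decomposition lemma with the bounds $\mu_{\max},\lmin^{-1} \leq 2^{O(m\ln m)}$ from Lemma~\ref{rate:const_bnd:lem}. Since the latter produces an $\ell_2$-style bound (via $\lmin$) whereas the conclusion asks for an $\ell_1$ bound, the crucial observation is that the integrality of $\M$ lets us bound $N$ in terms of $m$.

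First I would reduce to the case $N \leq 3^m$. Two columns of $\M_\fl$ that are identical can be merged: replacing the coefficients $\vals_{j_1},\vals_{j_2}$ by $\vals_{j_1}+\vals_{j_2}$ on one column and $0$ on the other leaves $\M_\fl\vals$ (hence the loss on $\fl$) unchanged, and by the triangle inequality cannot increase $\norm{\vals}_1$. Since every column of $\M_\fl$ lies in $\{-1,0,+1\}^{|\fl|}$, after this reduction the number of distinct columns is at most $3^{|\fl|} \leq 3^m$.

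Next I would bound $\norm{\M_\fl\vals}_\infty$. The zero vector achieves loss $|\fl| \leq m$ on $\fl$, so the optimum $K = \loss{\vals}{\fl} \leq m$; Item~\ref{rate:three:dec} of the decomposition lemma then forces $(\M\vals)_i \in [-\ln m,\mu_{\max}]$ for every $i\in\fl$. Writing $M_0 = \max(\ln m,\mu_{\max})$, this gives $\norm{\M_\fl\vals}_2 \leq \sqrt{m}\,M_0$. Translating $\vals$ along $\ker\M_\fl$ does not affect margins on $\fl$, so I may further assume $\vals$ is orthogonal to $\ker\M_\fl$; then by definition of $\lmin$,
\[
\norm{\vals}_2 \;\leq\; \norm{\M_\fl\vals}_2/\lmin \;\leq\; \sqrt{m}\,M_0/\lmin.
\]
Converting to the $\ell_1$-norm via the standard inequality $\norm{\vals}_1 \leq \sqrt{N}\norm{\vals}_2$, and using the reduction $N \leq 3^m$, I would conclude
\[
\norm{\vals}_1 \;\leq\; 3^{m/2}\sqrt{m}\;M_0/\lmin.
\]
Applying Lemma~\ref{rate:const_bnd:lem} to bound $\mu_{\max}$ and $\lmin^{-1}$ by $2^{O(m\ln m)}$, and observing $3^{m/2}\sqrt{m} = 2^{O(m)}$, gives $\norm{\vals}_1 \leq 2^{O(m\ln m)}$, as claimed.

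The main obstacle, and the reason the corollary needs the $\{-1,0,+1\}$ assumption, is precisely the jump from $\ell_2$ to $\ell_1$: without a bound on $N$ the factor $\sqrt{N}$ would spoil the argument, and it is the column-merging step (only available for matrices with a small finite alphabet) that saves us. Everything else is assembling pieces already proved.
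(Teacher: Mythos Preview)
Your proposal is correct and follows essentially the same route as the paper: bound the margins $(\M_\fl\vals)_i$ via Item~\ref{rate:three:dec}, project $\vals$ orthogonal to $\ker\M_\fl$ to get an $\ell_2$ bound through $\lmin$, then pass to $\ell_1$ with a $\sqrt{N}$ factor controlled by $N\leq 3^m$ from the integrality of the columns. The only difference is that you spell out the column-merging justification for $N\leq 3^m$ a bit more carefully than the paper, which simply asserts that the number of possible $\{-1,0,+1\}$ columns is at most $3^m$.
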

\begin{proof}
  Note that every entry of $\M_F\vals$ lies in the range $[-\ln m,
  \mu_{\max}=2^{O(m\ln m)}]$, and hence $\norm{\M_F\vals} \leq
  2^{O(m\ln m)}$. Next, we may choose $\vals$ orthogonal to the null
  space of $\M_F$; then $\norm{\vals} \leq \lmin^{-1}\norm{\M_F\vals}
  \leq 2^{O(m\ln m)}$. Since $\norm{\vals}_1 \leq
  \sqrt{N}\norm{\vals}$, and the number of possible columns $N$ with
  $\set{-1,0,+1}$ entries is at
  most $3^m$, the proof follows.
\end{proof}

\section{Improved Estimates}
\label{rate:improved:sec}

In this section we shed more light on the rate bounds by
cross-application of techniques from Sections~\ref{rate:B:sec} and
\ref{rate:eps:sec}. We obtain both new upper bounds for convergence to
the optimal loss, as well as lower bounds for convergence to an
arbitrary target loss. We also indicate what we believe might be the
optimal bounds for either situation.

We first show how the finite rate bound of
Theorem~\ref{rate:B_rate:thm} along with the decomposition
lemma yields a new rate of convergence to the
optimal loss. Although the dependence on $\eps$ is worse than in
Theorem~\ref{rate:rate:thm}, the dependence on $m$ is nearly
optimal. We will need the following key application of the
decomposition lemma.
\begin{lemma}
  \label{rate:rate_new:lem}
  When the feature matrix has $-1,0,+1$ entries, for any $\eps>0$,
  there is some solution with $\ell_1$-norm at most $2^{O(m\ln
    m)}\ln(1/\eps)$ that achieves within $\eps$ of the optimal loss.
\end{lemma}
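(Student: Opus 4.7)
The plan is to construct an explicit solution of the form $\vlam = \vals + c\alf$, where $\vals$ and $\alf$ are as guaranteed by the decomposition lemma (Lemma~\ref{rate:dec:lem}), and where the scalar $c$ is tuned so that the loss on $\zl$ becomes negligible while preserving the optimal loss on $\fl$. Specifically, by Item~\ref{rate:one:dec} we have $(\M\alf)_i = 0$ for $i\in\fl$, so the margins (and therefore the loss) on the finite-margin set are unchanged: $\loss{\vlam}{\fl} = \loss{\vals}{\fl} = K$, the unnormalized optimum. On the zero-loss set, each margin increases by at least $c\gamma$, giving $\loss{\vlam}{\zl} \leq e^{-c\gamma} \loss{\vals}{\zl}$.

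The next step is to bound both $\loss{\vals}{\zl}$ and $\gamma^{-1}$ so that we can choose $c$ appropriately. Since the feature matrix has entries in $\set{-1,0,+1}$, each margin satisfies $|(\M\vals)_i| \leq \norm{\vals}_1$, so $\loss{\vals}{\zl} \leq m \exp(\norm{\vals}_1)$. By Corollary~\ref{rate:vals:cor}, $\norm{\vals}_1 \leq 2^{O(m \ln m)}$, and by Lemma~\ref{rate:const_bnd:lem}, $\gamma^{-1} \leq 2^{O(m\ln m)}$. Choosing $c = \gamma^{-1}(\ln(1/\eps) + \norm{\vals}_1 + \ln m)$ ensures $e^{-c\gamma}\loss{\vals}{\zl} \leq m\eps$, and consequently the normalized loss satisfies
\[
L(\vlam) = \frac{1}{m}\bigl(K + \loss{\vlam}{\zl}\bigr) \leq \frac{K}{m} + \eps = L^* + \eps.
\]

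Finally, we bound $\norm{\vlam}_1 \leq \norm{\vals}_1 + c\norm{\alf}_1 = \norm{\vals}_1 + c$. Substituting the bounds on $\gamma^{-1}$ and $\norm{\vals}_1$, we get $c \leq 2^{O(m\ln m)} \ln(1/\eps) + 2^{O(m\ln m)}$, and since $\norm{\vals}_1 \leq 2^{O(m\ln m)}$ as well, the overall $\ell_1$-norm is at most $2^{O(m\ln m)}\ln(1/\eps) + 2^{O(m\ln m)}$, which is $2^{O(m\ln m)}\ln(1/\eps)$ for $\eps$ below some absolute constant (and trivial otherwise, e.g. by taking $\vlam = \vals$ directly). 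The main subtlety worth flagging is that the quantity $\ln \loss{\vals}{\zl}$ appearing inside $c$ is itself as large as $\norm{\vals}_1 = 2^{O(m\ln m)}$, which could naively blow up to $2^{2^{O(m\ln m)}}$; the calculation survives only because this large term sits \emph{inside} $c$ additively with $\ln(1/\eps)$, rather than multiplicatively, so after multiplying by $\gamma^{-1} \leq 2^{O(m\ln m)}$ the two factors of $2^{O(m\ln m)}$ merge into a single $2^{O(m\ln m)}$ and the $\eps$-dependence cleanly separates.
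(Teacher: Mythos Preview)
Your proof is correct and follows essentially the same strategy as the paper's: build $\vlam = \vals + c\,\alf$, choose $c$ large enough that the contribution of the zero-loss set $\zl$ drops below $\eps$ while the loss on $\fl$ stays at the optimum $K$, and then invoke the bounds $\gamma^{-1}\leq 2^{O(m\ln m)}$ and $\norm{\vals}_1\leq 2^{O(m\ln m)}$ from Lemma~\ref{rate:const_bnd:lem} and Corollary~\ref{rate:vals:cor}. The only cosmetic difference is that the paper tracks the minimum margin $\min_{i\in\zl}(\M\vals)_i$ on $\zl$ and shifts so that each margin reaches $\ln(1/\eps)$, whereas you bound $\loss{\vals}{\zl}\leq m\,e^{\norm{\vals}_1}$ and shift so that $e^{-c\gamma}\loss{\vals}{\zl}\leq m\eps$; these are equivalent up to constants inside the $2^{O(m\ln m)}$.
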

\begin{proof}
  Let $\vals,\alf,\gamma$ be as given by the decomposition lemma. Let
  $c = \min_{i\in Z}\enc{\M\vals}_i$ be the minimum margin produced by
  $\vals$ on any example in the zero-loss set $Z$. Then $\vals -
  c\alf$ produces non-negative margins on $Z$, and the optimal margins
  on the finite loss set $F$. Therefore, the vector $\vlams = \vals +
  \enc{\ln(1/\eps)\gamma^{-1}-c}\alf$ achieves at least $\ln(1/\eps)$
  margin on every example in $Z$, and optimal margins on the finite
  loss set $F$. Hence $\lf(\vlams) \leq \inf_{\vlam}\lf(\vlam) +
  \eps$.  Using $\abs{c} \leq \norm{\M\vals} \leq
  m\norm{\vals}$, and the results in Corollary~\ref{rate:vals:cor} and
  Lemma~\ref{rate:const_bnd:lem}, we may conclude the vector $\vlams$ has
  $\ell_1$-norm at most $2^{O(m\ln m)}\ln(1/\eps)$.
\end{proof}
We may now invoke Theorem~\ref{rate:B_rate:thm} to obtain a $2^{O(m\ln
  m)}\ln^6(1/\eps)\eps^{-5}$ rate of convergence to the optimal
solution. Rate bounds with similar dependence on $m$ and slightly
better dependence on $\eps$ can be obtained by modifying the proof in
Section~\ref{rate:eps:sec} to use first order instead of second order
techniques. In that way we may obtain a
$\poly(\lmin^{-1},\gamma^{-1},\mu_{\max})\eps^{-3} = 2^{O(m\ln
  m)}\eps^{-3}$ rate bound. We omit the the rather long but
straightforward proof of this fact. Finally, note that if
Conjecture~\ref{rate:B_rate:conj} is true, then
Lemma~\ref{rate:rate_new:lem} implies a $2^{O(m\ln
  m)}\ln(1/\eps)\eps^{-1}$ rate bound for converging to the optimal
loss, which is nearly optimal in both $m$ and $\eps$. We state this as
an independent conjecture.
\begin{conjecture}
  \label{rate:rate:conj}
  For feature matrices with $-1,0,+1$ entries, AdaBoost converges to
  within $\eps$ of the optimal loss within $2^{O(m\ln
    m)}\eps^{-(1+o(1))}$ rounds.
\end{conjecture}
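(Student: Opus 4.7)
The strategy is to prove Conjecture~\ref{rate:rate:conj} conditionally on Conjecture~\ref{rate:B_rate:conj}, by a standard splitting trick combined with Lemma~\ref{rate:rate_new:lem}. Split the target accuracy as $\eps = \eps_1 + \eps_2$. Lemma~\ref{rate:rate_new:lem} supplies a reference combination $\vlams$ of $\ell_1$-norm $B = 2^{O(m\ln m)}\ln(1/\eps_1)$ with $\lf(\vlams)\leq \inf_{\vlam}\lf(\vlam)+\eps_1$. Granting Conjecture~\ref{rate:B_rate:conj}, AdaBoost reaches loss at most $\lf(\vlams)+\eps_2$ in $O(B^2/\eps_2)$ rounds, and hence loss at most $\inf_{\vlam}\lf(\vlam)+\eps$. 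Balancing $\eps_1=\eps_2=\eps/2$ yields
\[
O\!\left(\frac{2^{O(m\ln m)}\ln^2(2/\eps)}{\eps}\right) \;=\; 2^{O(m\ln m)}\cdot \eps^{-1}\cdot\polylog(1/\eps)
\]
rounds, and since $\polylog(1/\eps)=\eps^{-o(1)}$ this matches $2^{O(m\ln m)}\eps^{-(1+o(1))}$, exactly the rate in the conjecture. (The discussion following Lemma~\ref{rate:rate_new:lem} in essence records this same reduction.)

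The real content of the conjecture is therefore Conjecture~\ref{rate:B_rate:conj}, and this is the main obstacle. The slack in Theorem~\ref{rate:B_rate:thm} comes entirely from Lemma~\ref{rate:SR:lem}, which controls $S_t$ only by establishing monotonicity of $R_t^2 S_t$. As the authors themselves observe, even plain monotonicity of $S_t$ (i.e.\ $\ds_t\leq 0$) would immediately feed through Lemmas~\ref{rate:edge_B_rate:lem} and \ref{rate:B_edge:lem} to give the sought $O(B^2/\eps)$ bound. A plausible route is to exploit the convexity of $\lf$ and the gradient-descent nature of AdaBoost's update to argue that the coordinate step cannot push the iterate further from the set $\set{\vlam:\lf(\vlam)\leq\lf(\vlams)}$ than it moves toward it. Lemma~\ref{rate:dsdr:lem} already yields $\ds_t/S_{t-1}\leq 2\dr_t/R_{t-1}$; the target would be to strengthen this to $\ds_t\leq 0$ (at least whenever $S_t$ is not already negligible), perhaps by showing that the line-search step length $\abs{\alpha_t}$ is dominated by the component of $\vlam^{t-1}-\vlam^\dagger$ along $\ve_{j_t}$ for some feasible $\vlam^\dagger$.

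Finally, the $o(1)$ in the exponent of $\eps$ is inherent to this reduction: Lemma~\ref{rate:rate_new:lem} produces a reference vector whose norm grows like $\ln(1/\eps)$, so even an optimal $B^2/\eps$ inner bound incurs a $\polylog(1/\eps)$ overhead. Removing the $o(1)$ would require either strengthening Lemma~\ref{rate:rate_new:lem} to an $\eps$-independent norm bound (unlikely in view of the lower bounds in Section~\ref{rate:B_lbnd:sec}) or an analysis tailored directly to convergence to the optimum that bypasses the finite-norm reference solution entirely. Both look substantially harder than the remaining gap in Conjecture~\ref{rate:B_rate:conj}, which is why the target rate is stated only up to the $\eps^{-o(1)}$ factor.
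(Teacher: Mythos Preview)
Your proposal matches the paper exactly. The statement is a \emph{conjecture}, so the paper gives no proof; instead it motivates the conjecture by precisely the conditional reduction you describe---Lemma~\ref{rate:rate_new:lem} combined with Conjecture~\ref{rate:B_rate:conj} yields the claimed $2^{O(m\ln m)}\eps^{-(1+o(1))}$ rate---and, like you, identifies Conjecture~\ref{rate:B_rate:conj} as the open obstacle.
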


We next focus on lower bounds on the convergence rate to arbitrary
target losses discussed in Section~\ref{rate:B:sec}. We begin by
showing the rate dependence on the norm of the solution as given in
Lemma~\ref{rate:B_lbnd:lem} holds for much more general datasets.
\begin{lemma}
  \label{rate:B_lbnd_imp:lem}
  Suppose a feature matrix has only $\pm 1$ entries, and the finite
  loss set is non-empty. Then, for any coordinate descent procedure,
  the number of rounds required to achieve a target loss $\phis$ is at
  least
  \[\inf \set{\norm{\vlam}_1:  \lf(\vlam) \leq \phis}/(1+\ln m).\]
\end{lemma}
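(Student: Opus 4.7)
The plan is to generalize Lemma~\ref{rate:B_lbnd:lem}: bound the per-round step size $|\alpha_t|$, sum to get $\sum_t|\alpha_t| \geq \|\lambda^T\|_1 \geq \inf\{\|\lambda\|_1 : L(\lambda) \leq \phi^*\}$, and invert to obtain a lower bound on the required number of rounds $T$. Since we no longer have the explicit complementary-rows assumption, the non-emptiness of $F$ must be invoked via the decomposition lemma to supply the missing boundedness.

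The crucial new observation is that when $F \neq \emptyset$, every combination $\lambda$ has some example $i \in F$ with non-positive margin $(M\lambda)_i \leq 0$. Otherwise, scaling $\lambda$ by $c \to \infty$ would drive $\sum_{i \in F}e^{-c(M\lambda)_i}$ below $m$ while forcing some $F$-margin to exceed any fixed $\mu_{\max}$, contradicting Item~\ref{rate:three:dec} of Lemma~\ref{rate:dec:lem}. Consequently, at every iterate $L(\lambda^t) \geq 1/m$, since the loss on that one example alone is $\geq 1$.

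Next I would bound $|\alpha_t|$ using the AdaBoost line-search formula $\alpha_t = \frac{1}{2}\ln\bigl((1+r_t)/(1-r_t)\bigr)$ together with $L(\lambda^t)/L(\lambda^{t-1}) = \sqrt{1-r_t^2}$. A direct algebraic manipulation gives
\[
|\alpha_t| \;=\; \ln(1+|r_t|) + \ln\frac{L(\lambda^{t-1})}{L(\lambda^t)} \;\leq\; \ln 2 + \ln\frac{L(\lambda^{t-1})}{L(\lambda^t)}.
\]
Summing over $t=1,\ldots,T$, telescoping, and substituting $L(\lambda^T) \geq 1/m$ yields
\[
\sum_t |\alpha_t| \;\leq\; T\ln 2 + \ln\frac{1}{L(\lambda^T)} \;\leq\; T\ln 2 + \ln m \;\leq\; T(1+\ln m)
\]
for $T \geq 1$ (using $\ln 2 \leq 1$). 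Combined with $\|\lambda^T\|_1 \leq \sum_t|\alpha_t|$, this gives the claimed bound $T \geq \inf\{\|\lambda\|_1 : L(\lambda)\leq\phi^*\}/(1+\ln m)$.

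The main obstacle is extending the step-size bound from AdaBoost's line-search step to any coordinate descent procedure. For a general monotone-loss step one can instead use $|\alpha_t| \leq -\ln D^-_t$, which follows directly from $D^+_t e^{-\alpha_t} + D^-_t e^{\alpha_t} \leq 1$. To bound $D^-_t \geq 1/m$ one wants some $i \in F$ with $M_{i,j_t}$ opposite to $\mathrm{sign}(\alpha_t)$ and with non-positive margin, which ensures $D_t(i) \geq 1/m$. Verifying this in full generality — rather than merely identifying some $F$-example with non-positive margin as above — is the delicate step, and is where extra care using the structure of the $\pm 1$ feature matrix together with the decomposition lemma will be required.
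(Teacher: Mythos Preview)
Your argument is correct, and the concern in your final paragraph is largely unnecessary: in this paper ``coordinate descent'' means exact line search along whichever coordinate is chosen, and for a $\pm 1$ feature matrix your identity $|\alpha_t|=\ln(1+|r_t|)+\ln\bigl(L(\llambda^{t-1})/L(\llambda^t)\bigr)$ holds exactly for any such step, not only the greedy AdaBoost choice. The telescoping therefore already covers ``any coordinate descent procedure'' in the intended sense, and the $D^-_t$-based alternative you sketch (with its attendant difficulties) is not needed.

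Your route does differ from the paper's, however. Rather than telescope, the paper bounds each step individually. Since $F\neq\emptyset$ forces every loss value, and in particular the infimum loss, to be at least $1/m$ (essentially the observation you make via Item~\ref{rate:three:dec}), the minimum of the loss along any coordinate line, namely $L(\llambda^{t-1})\sqrt{1-\delta^2}$, is itself at least $1/m$; combined with $L(\llambda^{t-1})\leq 1$ this gives $\sqrt{1-\delta^2}\geq 1/m$ for the edge $\delta$ in the chosen direction. Then directly
\[
|\alpha_t|=\tfrac12\ln\frac{1+\delta}{1-\delta}=\ln(1+\delta)-\tfrac12\ln(1-\delta^2)\leq \delta+\ln m\leq 1+\ln m,
\]
a uniform per-round bound. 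Your aggregate bound $\sum_t|\alpha_t|\leq T\ln 2+\ln m$ is slightly sharper but couples each step to the loss drops of all previous steps; the paper's argument is more modular and, because it only requires the step not to exceed the line-search optimum, is a bit more robust to variations in the step rule.
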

\begin{proof}
  It suffices to upper-bound the step size $\abs{\alpha_t}$ in any
  round $t$ by at most $1+\ln
  m$. Notice that when the feature matrix has $\pm 1$ entries, a step
  in a direction that does not end up increasing the loss is at most
  of length $(1/2)\ln\enc{\enc{1+\delta}/\enc{1-\delta}}$, where $\delta$ is
  the edge in that direction. Therefore, if $\delta_t$ is the maximum edge
  achievable in any direction, we have
  \[
  \abs{\alpha_t} \leq
  \frac{1}{2}\ln\enc{\frac{1+\delta_t}{1-\delta_t}}.
  \]
  Further, by \eqref{rate:drop:eqn}, a large edge $\delta_t$ ensures
  that for some coordinate step, the new vector $\vlam^t$ will have
  much smaller loss than the vector $\vlam^{t-1}$ at the beginning of
  round $t$: $\lf(\vlam^{t}) \leq
  \lf(\vlam^{t-1})\sqrt{1-\delta_t^2}$.  On the other hand, before the
  step, the loss is at most $1$, $\lf(\vlam^{t-1}) \leq 1$, and after
  the step the loss is at most $1/m$ (since the optimal loss on a
  dataset with non-empty finite set is at least $1/m$): $\lf(\vlam^t)
  \geq 1/m$. Combining these inequalities we get
  \[
  1/m \leq \lf(\vlam^t) \leq \lf(\vlam^{t-1})\sqrt{1-\delta_t^2} \leq
  \sqrt{1-\delta_t^2},
  \]
  that is, $\sqrt{1-\delta_t^2} \geq 1/m$. Now the step length can be bounded as
  \[
  \abs{\alpha_t} \leq
  \frac{1}{2}\ln\enc{\frac{1+\delta_t}{1-\delta_t}} = \ln(1+\delta_t) -
  \frac{1}{2}\ln(1-\delta_t^2) \leq \delta_t + \ln m \leq 1 + \ln m.
  \]
\end{proof}
We end by showing a new lower bound for the convergence rate to an
arbitrary target loss studied in Section~\ref{rate:B:sec}.
Corollary~\ref{rate:B_big:cor} implies that the rate bound in
Theorem~\ref{rate:B_rate:thm} has to be at least polynomially large in
the norm of the solution. We now show that a polynomial dependence on
$\eps^{-1}$ in the rate is unavoidable too. This shows that rates for
competing with a finite solution are different from rates on a
dataset where the optimum loss is achieved by a finite solution, since
in the latter we may achieve a $O\enc{\ln(1/\eps)}$ rate.
\begin{corollary}
  \label{rate:B_dec_lbnd:cor}
  Consider any dataset (e.g. the one in Figure~\ref{rate:dat:fig}) for
  which $\Omega(1/\eps)$ rounds are necessary to get within $\eps$ of
  the optimal loss. If there are constants $c$ and $\beta$ such that
  for any $\vlams$ and $\eps$, a loss of $\lf(\vlams) + \eps$ can be
  achieved in at most $O(\norm{\vlams}_1^c\eps^{-\beta})$ rounds, then
  $\beta \geq 1$.
\end{corollary}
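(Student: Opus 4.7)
The plan is to argue by contradiction. Suppose $\beta < 1$ and that the rate bound $O(\norm{\vlams}_1^c \eps^{-\beta})$ holds for all $\vlams$ and $\eps > 0$. We will instantiate this bound on the dataset $\M$ of Figure~\ref{rate:dat:fig}, whose optimal loss is $L^* = 2/3$ (attained only in the limit by $\infty\cdot(1,1)$) and for which $\Omega(1/\eps)$ rounds are necessary to reach loss $L^* + \eps$.

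First I would exhibit, for every $\eps_1 > 0$, a \emph{finite} combination $\vlams(\eps_1)$ whose loss is within $\eps_1$ of the optimum and whose $\ell_1$-norm is only logarithmic in $1/\eps_1$. On this three-example dataset the choice $\vlams = (c,c)$ gives margins $0, 0, 2c$ and loss $2/3 + e^{-2c}/3$, so taking $c = \tfrac{1}{2}\ln(1/(3\eps_1))$ yields $\lf(\vlams) \leq 2/3 + \eps_1$ with $\norm{\vlams}_1 = \ln(1/(3\eps_1))$.

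Next, to approach the optimum loss to total accuracy $\eps$, I would split the error budget as $\eps = \eps_1 + \eps_2$ with $\eps_1 = \eps_2 = \eps/2$, pick the associated $\vlams(\eps_1)$, and invoke the hypothesized bound to reach loss $\lf(\vlams(\eps_1)) + \eps_2 \leq L^* + \eps$. This takes at most
\[
O\!\left( \norm{\vlams(\eps_1)}_1^{c}\,\eps_2^{-\beta}\right)
\; = \;
O\!\left( \paren{\ln(2/(3\eps))}^{c}\, \eps^{-\beta}\right)
\]
rounds. Combining with the assumed lower bound $\Omega(1/\eps)$ on the number of rounds required to attain loss $L^* + \eps$ on this dataset yields
\[
\paren{\ln(1/\eps)}^{c}\, \eps^{-\beta} \;\geq\; \Omega(\eps^{-1}),
\]
i.e. $\paren{\ln(1/\eps)}^{c} \geq \Omega(\eps^{\beta - 1})$ for all sufficiently small $\eps > 0$.

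If $\beta < 1$, the right-hand side grows polynomially in $1/\eps$ while the left-hand side grows only poly-logarithmically, a contradiction as $\eps \to 0$. Hence $\beta \geq 1$. The only nontrivial step is the construction of an approximate minimizer of logarithmic norm, but this is essentially immediate for the Figure~\ref{rate:dat:fig} dataset by direct calculation; more generally, any dataset whose optimal loss is not attained but admits approximate minimizers with $\norm{\vlams}_1 = \poly\log(1/\eps_1)$ will do, which covers the intended examples.
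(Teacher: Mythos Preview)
Your proof is correct and follows essentially the same argument as the paper: construct a reference $\vlams$ of norm $O(\ln(1/\eps))$ achieving loss within $\eps/2$ of optimal, then observe that if $\beta<1$ the hypothesized bound gives $O\!\bigl((\ln(1/\eps))^{c}\eps^{-\beta}\bigr)=o(1/\eps)$ rounds to reach loss $L^*+\eps$, contradicting the $\Omega(1/\eps)$ lower bound. The only difference is that the paper obtains such a $\vlams$ via the decomposition lemma (taking $\vlams=\vals+\ln(2/\eps)\alf$, which works for any fixed dataset satisfying the hypothesis), whereas you compute it explicitly for the Figure~\ref{rate:dat:fig} example.
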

\begin{proof}
  The decomposition lemma implies that $\vlams = \vals +
  \ln(2/\eps)\alf$ with $\ell_1$-norm $O(\ln(1/\eps))$ achieves loss
  at most $K + \eps/2$ (recall $K$ is the optimal loss). Suppose the corollary
  fails to hold for constants $c$ and $\beta \leq 1$. Then $L(\vlams)
  + \eps/2 = K+\eps$ loss can be achieved in
  $O(\eps^{-\beta})/\ln^c(1/\eps)) = o(1/\eps)$ rounds, contradicting
  the $\Omega(1/\eps)$ lower bound.
\end{proof}

\section{Conclusion}
In this paper we studied the convergence rate of AdaBoost with respect
to the exponential loss. We showed upper and lower bounds for
convergence rates to both an arbitrary target loss achieved by some
finite combination of the weak hypotheses, as well as to the infimum
loss which may not be realizable. For the first convergence rate, we
showed a strong relationship exists between the size of the minimum
vector achieving a target loss and the number of rounds of coordinate
descent required to achieve that loss. In particular, we showed that a
polynomial dependence of the rate on the $\ell_1$-norm $B$ of the
minimum size solution is absolutely necessary, and that a
$\poly(B,1/\eps)$ upper bound holds, where $\eps$ is the accuracy
parameter. The actual rate we derive has rather large exponents, and
we discuss a minor variant of AdaBoost that achieves a much tighter
and near optimal rate.

For the second kind of convergence, using entirely separate techniques,
we derived a $C/\eps$ upper bound, and showed that this is tight up
to constant factors. In the process, we showed a certain decomposition
lemma that might be of independent interest. We also study the
constants and show how they depend on certain intrinsic parameters
related to the singular values of the feature matrix. We estimate the
worst case values of these parameters, and considering feature
matrices with only $\set{-1,0,+1}$ entries, this leads to a bound on
the rate constant $C$ that is doubly exponential in the number of
training examples. Since this is rather large, we also include bounds polynomial
in both the number of training examples and the accuracy parameter
$\eps$, although the dependence on $\eps$ in these bounds is
non-optimal. 

Finally, for each kind of convergence, we conjecture tighter bounds
that are not known to hold presently. A table containing a summary of
the results in this paper is included in Figure~\ref{rate:summary:fig}.


\begin{figure}
  \centering
  \begin{tabular}{|p{3.5cm}|c|c|}
    \hline
    Convergence rate with respect to: &
    Reference solution
    (Section~\ref{rate:B:sec}) &
    Optimal solution
    (Section~\ref{rate:eps:sec})
    \bigstrut \\
    \hline
    \hline
    \multirow{2}{*}
    {Upper bounds:} &
    \multirow{2}{*}
    {${13B^6}/{\eps^5}$} &
    ${\poly(e^{\mu_{\max}},\lmin^{-1},\gamma^{-1})}/{\eps}
    \leq 2^{2^{O(m\ln m)}}/\eps$
    \bigstrut \\
    \cline{3-3}
    &&
    ${\poly(\mu_{\max},\lmin^{-1},\gamma^{-1})}/{\eps^3}
    \leq 2^{O(m\ln m)}/\eps^3$
    \bigstrut \\
    \hline\hline%
    {Lower bounds with:} &
    $(B/\eps)^{1-\nu}$ for any constant $\nu$ &
    \multirow{2}{*}
    {$\max\set{\frac{2^{m}\ln(1/\eps)}{\ln m},
        \frac{2}{9\eps}}$}
    \bigstrut \\
    \cline {2-2}
    a) $\set{0,\pm 1}$ entries  &
    $(2^{m}/\ln m)\ln(1/\eps)$ &
    \bigstrut \\
    \hline
    b) real entries &
    \multicolumn{2}{c|}
    {Can be arbitrarily large even when $m,N,\eps$ are held fixed}
     \\
    \hline\hline
    Conjectured upper bounds: &
    $O({B^2}/{\eps})$ &
    ${2^{O(m\ln m)}}/{\eps^{1+o(1)}}$, if entries in $\set{0,\pm 1}$
    \bigstrut \\
    \hline
  \end{tabular}
  \caption{Summary of our most important results and conjectures
    regarding the convergence rate of AdaBoost. Here $m$ refers to the
    number of training examples, and $\eps$ is the accuracy
    parameter. The quantity $B$ is the $\ell_1$-norm of the reference
    solution used in Section~\ref{rate:B:sec}. The parameters $\lmin$,
    $\gamma$ and $\mu_{\max}$ depend on the dataset and are defined
    and studied in Section~\ref{rate:eps:sec}.}
  \label{rate:summary:fig}
\end{figure}

\subsection*{Acknowledgments}
This research was funded by the National Science Foundation under
grants IIS-1016029 and IIS-1053407.
We thank Nikhil Srivastava for informing us of the
matrix used in Theorem~\ref{rate:B_big:thm}. We also thank Aditya
Bhaskara and Matus Telgarsky for many helpful discussions.
}
\old{
\newpage
\appendix
\section*{Appendix}
\begin{lemma}
  \label{rate:eps_lbnd:lem} 
  For any $\eps<1/3$, to get within $\eps$ of the optimum loss on the
  dataset in Table~\ref{rate:dat:fig}, AdaBoost takes at least
  $2/(9\eps)$ steps. 
\end{lemma}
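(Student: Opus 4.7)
The plan is to trace AdaBoost's dynamics explicitly on this three-example dataset and derive a precise formula for the margin on example $c$, then combine with an elementary lower bound on the loss.

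I would first exploit the symmetry between examples $a$ and $b$. Setting $x = \lambda_1 - \lambda_2$ and $y = \lambda_1 + \lambda_2$, the three margins become $x$, $-x$, and $y$, so the loss is $\lf(\vlam) = (e^{x} + e^{-x} + e^{-y})/3 \geq (2 + e^{-y})/3$. Consequently, any $\vlam$ with $\lf(\vlam) \leq 2/3 + \eps$ must satisfy $y \geq \ln\bigl(1/(3\eps)\bigr)$.

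Second, I would show by induction on $t \geq 1$ that AdaBoost's iterates satisfy $e^{2 y_t} = t(t+1)$. Two observations drive the induction. (i) AdaBoost's line-search forces the newly updated correlation to vanish at the start of the next round, so that as long as the other correlation is non-zero (which it is, since the optimum is never attained finitely here), the chosen coordinate alternates: $j_{t+1} \neq j_t$. (ii) Writing $r_1, r_2$ as weighted sums over the three examples, a short symbolic manipulation of the linesearch step yields the recursion $e^{2 y_{t+1}} = w(w+1)$, where $w = e^{2 \lambda_{j_t}^t}$ is the value attained by the just-updated coordinate. Carrying the auxiliary invariant $e^{2 \lambda_{j_t}^t} = t+1$ through the induction then gives $e^{2 y_{t+1}} = (t+1)(t+2)$. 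The base case $t = 1$ is immediate from the symmetric initial correlations $r_1 = r_2 = 1/3$, which yield $\alpha_1 = \tfrac{1}{2}\ln 2$.

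Combining the two pieces, $\lf(\vlam^T) - 2/3 \geq 1/\bigl(3\sqrt{T(T+1)}\bigr) \geq 1/\bigl(3(T+1)\bigr)$, so $\lf(\vlam^T) \leq 2/3 + \eps$ forces $T \geq 1/(3\eps) - 1$, which exceeds $2/(9\eps)$ whenever $\eps \leq 1/9$. For the remaining range $\eps \in (1/9, 1/3)$, the target $2/(9\eps)$ is less than $2$ and the trivial bound $\lf(\vlam^0) = 1 > 2/3 + \eps$ (valid whenever $\eps < 1/3$) handles the remaining cases directly. The main obstacle is the algebraic step in (ii): one must verify from AdaBoost's greedy choice of direction and step size that the identity $e^{2 y_{t+1}} = w(w+1)$ and the alternation $j_{t+1} \neq j_t$ indeed hold, but the strong symmetry of the dataset keeps the bookkeeping tractable.
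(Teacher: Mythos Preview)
Your approach works and is genuinely different from the paper's. The paper tracks the edge $\delta_t$, proves the recursion $\delta_{t+1}=\delta_t/(1+\delta_t)$ (so $\delta_t=1/t$ for $t\ge 2$), and then evaluates the \emph{exact} loss via the telescoping product $\prod_t\sqrt{1-\delta_t^2}=\tfrac{2}{3}\sqrt{1+1/T}$, which yields $\lf(\vlam^T)\ge 2/3+2/(9T)$ in a single stroke for all $T\ge 1$. You instead track the margin $y_t$ on example $c$ directly and lower-bound the loss via $e^x+e^{-x}\ge 2$. Your invariant $e^{2y_t}=t(t+1)$ is equivalent to the paper's edge recursion---both fall out cleanly from the line-search optimality condition, which here reduces to $e^{2\lambda_{j_t}^t}-e^{2\lambda_{3-j_t}^t}=1$---but because you discard the contribution of $e^{x_t}+e^{-x_t}-2$, your bound on the excess loss is slightly looser and forces an endgame case split.

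That split has a small gap. For $\eps\in(1/9,\,2/9)$ the target $2/(9\eps)$ lies strictly between $1$ and $2$, so the lemma requires $T\ge 2$; your ``trivial bound'' $\lf(\vlam^0)=1>2/3+\eps$ only delivers $T\ge 1$. The repair is immediate: either observe additionally that $\lf(\vlam^1)=2\sqrt{2}/3>2/3+2/9$, so one round is never enough for $\eps<2/9$; or keep your sharper inequality $T(T+1)\ge 1/(9\eps^2)$ rather than weakening it to $T+1\ge 1/(3\eps)$, since the former already gives $T\ge 2/(9\eps)$ for all $\eps\le 5/18$, and $5/18>2/9$ so this range overlaps with the range $\eps\ge 2/9$ covered by the trivial bound.
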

\begin{proof}
  Note that the optimal loss is $2/3$, and we are bounding the number of
  rounds necessary to get within $(2/3)+\eps$ loss for $\eps < 1/3$.
  We will compute the edge in each round analytically. Let
  $w^t_a,w^t_b,w^t_c$ denote the normalized-losses (adding up to 1) or
  weights on examples $a,b,c$ at the beginning of round $t$,
   $h_t$ the weak hypothesis
  chosen in round $t$, and $\delta_t$ the edge in round $t$. The
  values of these parameters are shown below for the first 5 rounds,
  where we have assumed (without loss of generality) that the
  hypothesis picked in round 1 is $\bh_b$:
  \begin{center}
    \begin{tabular}{c|ccc|c|c}
     Round  & $w^t_a$ & $w^t_b$ & $w^t_c$ & $h_t$ & $\delta_t$ \\
      \hline 
      $t=1:$ & $1/3$ & $1/3$ & $1/3$ & $\bh_b$ & $1/3$ \\
      $t=2:$ & $1/2$ & $1/4$ & $1/4$ & $\bh_a$ & $1/2$ \\
      $t=3:$ & $1/3$ & $1/2$ & $1/6$ & $\bh_b$ & $1/3$ \\
      $t=4:$ & $1/2$ & $3/8$ & $1/8$ & $\bh_a$ & $1/4$ \\
      $t=5:$ & $2/5$ & $1/2$ & $1/10$ & $\bh_b$ & $1/5$.
    \end{tabular}
  \end{center}
  Based on the patterns above, we first claim that for rounds $t \geq
  2$,  the edge achieved  is $1/t$. 
  In fact we prove the stronger claims, that for
  rounds $t\geq 2$, the following hold:
   \begin{enumerate}
  \item One of $w^t_a$ and $w^t_b$ is $1/2$.
  \item $\delta_{t+1} = \delta_t/(1+\delta_t)$. 
  \end{enumerate}
  Since $\delta_2 = 1/2$, the recurrence on $\delta_t$ would
  immediately imply $\delta_t = 1/t$ for $t\geq 2$.  We prove the
  stronger claims by induction on the round $t$.  The base case for
  $t=2$ is shown above and may be verified.  Suppose the inductive
  assumption holds for $t$.  Assume without loss of generality that
  $1/2=w^t_a > w^t_b > w^t_c$; note this implies $w^t_b =
  1-(w^t_a+w^t_c) = 1/2-w^t_c$.  Further, in this round, $\bh_a$ gets
  picked, and has edge $\delta_t = w^t_a + w^t_c - w^t_b = 2w^t_c$.
  Now for any dataset, the weights of the examples labeled correctly and
  incorrectly in a round of AdaBoost are rescaled during the weight
  update step in a way such that each add up to $1/2$ after the rescaling.
  Therefore, $w^{t+1}_b = 1/2, w^{t+1}_c =
  w^t_c\enc{\frac{1/2}{w^t_a + w^t_c}} = w^t_c/(1+2w^t_c)$.  Hence,
  $\bh_b$ gets picked in round $t+1$ and, as before, we get edge
  $\delta_{t+1} = 2w^{t+1}_c = 2w^t_c/(1+2w^t_c) =
  \delta_t/(1+\delta_t)$. The proof of our claim follows by induction.

  Next we find the loss after each iteration. Using $\delta_1 = 1/3$
  and $\delta_t = 1/t$ for $t\geq 2$, the loss after $T$ rounds can be
  written as
  \[
  \prod_{t=1}^T\sqrt{1-\delta_t^2} =
  \sqrt{1- (1/3)^2}\prod_{t=2}^T \sqrt{1-1/t^2}
  = \frac{2\sqrt{2}}{3}\sqrt{\prod_{t=2}^T
    \enc{\frac{t-1}{t}}
    \enc{\frac{t+1}{t}}}.
  \]
  The product can be rewritten as follows:
  \[
  \prod_{t=2}^T\enc{\frac{t-1}{t}}\enc{\frac{t+1}{t}}
  =\enc{\prod_{t=2}^T\frac{t-1}{t}}
  \enc{\prod_{t=2}^T\frac{t+1}{t}}
  =\enc{\prod_{t=2}^T\frac{t-1}{t}}
  \enc{\prod_{t=3}^{T+1}\frac{t}{t-1}}.
  \]
  Notice almost all the terms cancel, except for the first term of the
  first product, and the last term of the second product.
  Therefore, the loss after $T$ rounds is
  \[
  \frac{2\sqrt{2}}{3}
  \sqrt{\enc{\frac{1}{2}}
    \enc{\frac{T+1}{T}}}
  = \frac{2}{3}\sqrt{1+\frac{1}{T}}
  \geq \frac{2}{3}\enc{1 +
    \frac{1}{3T}}
  = \frac{2}{3} + \frac{2}{9T},
  \]
  where the inequality holds for $T\geq 1$.
  Since the
  initial error is $1 = (2/3) + 1/3$, therefore, for any $\eps < 1/3$,
  the number of rounds needed to achieve loss $(2/3) + \eps$ is at
  least $2/(9\eps)$.
\end{proof}
\begin{lemma}
  \label{rate:rec:lem}
  Suppose $u_0, u_1, \ldots, $ are non-negative numbers satisfying
  \[
  u_t - u_{t+1} \geq c_0u_t^{1+c_1},
  \]
  for some non-negative constants $c_0,c_1$. Then, for any $t$,
  \[
  \frac{1}{u_t^{c_1}} - \frac{1}{u_0^{c_1}} \geq c_1c_0t.
  \]
\end{lemma}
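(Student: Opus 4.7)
The plan is to prove the inequality one step at a time and then telescope. The key single-step claim I would establish is that for every $t$,
\[
\frac{1}{u_{t+1}^{c_1}} - \frac{1}{u_t^{c_1}} \geq c_0 c_1.
\]
Once this is in hand, summing over $t=0,1,\ldots,T-1$ gives the stated bound directly.

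To prove the single-step claim, I would use convexity of the map $x \mapsto 1/x^{c_1}$ on $(0,\infty)$ (valid for $c_1 > 0$; the $c_1 = 0$ case makes the lemma trivial since the right-hand side is zero). Specifically, writing $f(x) = x^{-c_1}$, we have $f'(x) = -c_1 x^{-(1+c_1)}$, and by convexity the tangent line at $u_t$ lies below $f$, yielding the first-order inequality
\[
f(u_{t+1}) - f(u_t) \geq f'(u_t)(u_{t+1} - u_t) = \frac{c_1(u_t - u_{t+1})}{u_t^{1+c_1}}.
\]
Substituting the hypothesis $u_t - u_{t+1} \geq c_0 u_t^{1+c_1}$ cancels the $u_t^{1+c_1}$ denominator and leaves exactly $c_0 c_1$, as desired. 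The hypothesis also guarantees that the sequence is non-increasing, so if $u_0 > 0$ then all $u_t$ at which the bound is not already trivial are positive, and we are never dividing by zero.

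The only minor obstacle is a boundary issue: if some $u_{t^*} = 0$, then $1/u_{t^*}^{c_1}$ is interpreted as $+\infty$ and the conclusion holds vacuously, while if $u_0 = 0$ the hypothesis forces all $u_t = 0$ and the statement is an equality $0 \geq 0$. So the proof reduces to the strictly positive case, where the convexity argument plus telescoping closes everything cleanly. No additional results from earlier in the paper are needed.
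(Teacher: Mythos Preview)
Your proof is correct and, like the paper, reduces the claim to the single-step inequality $1/u_{t+1}^{c_1} - 1/u_t^{c_1} \geq c_0 c_1$ (the paper phrases this as an induction, you as a telescoping sum, which is cosmetic). The genuine difference is in how that single-step bound is established. You use convexity of $x \mapsto x^{-c_1}$ and the tangent-line inequality $f(u_{t+1}) - f(u_t) \geq f'(u_t)(u_{t+1}-u_t)$, after which substituting the hypothesis finishes in one line. The paper instead rewrites the target as $(u_t/u_{t+1})^{c_1} \geq 1 + c_1 c_0 u_t^{c_1}$, derives $u_t/u_{t+1} \geq 1 + c_0 u_t^{c_1}$ from $1/(1-x) \geq 1+x$, and then closes with a Bernoulli-type inequality $(1+a)^b \geq 1+ab$. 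Your convexity route is shorter and more transparent, and it is uniformly valid for all $c_1 \geq 0$ without needing to worry about the direction of the Bernoulli inequality for $c_1 < 1$; the paper's route is more algebraic and strings together several auxiliary inequalities. Your treatment of the degenerate cases ($c_1 = 0$, or some $u_t = 0$) is also adequate.
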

\begin{proof}
  By induction on $t$. The base case is an identity. Assume the statement
  holds at iteration $t$. Then,
  \begin{eqnarray*}
    \frac{1}{u_{t+1}^{c_1}} - \frac{1}{u_0^{c_1}} =
    \enc{\frac{1}{u_{t+1}^{c_1}} - \frac{1}{u_t^{c_1}}} + 
    \enc{\frac{1}{u_t^{c_1}} - \frac{1}{u_0^{c_1}}} 
    \geq \frac{1}{u_{t+1}^{c_1}} - \frac{1}{u_t^{c_1}} + c_1c_0t \mbox{
      (by inductive hypothesis). }
  \end{eqnarray*}
  Thus it suffices to show
  $1/u_{t+1}^{c_1} - 1/u_t^{c_1} \geq c_1c_0$. Multiplying both sides
  by $u_t^{c_1}$ and adding 1, this is equivalent to showing
  $(u_t/u_{t+1})^{c_1} \geq 1 + c_1c_0u_t^{c_1}$.
  We will in fact show the stronger inequality
  \begin{equation}
    \label{rate:rec_one:eqn}
  \enc{u_t/u_{t+1}}^{c_1} \geq \enc{1 + c_0u_t^{c_1}}^{c_1}.
  \end{equation}
  Since $(1+a)^b \geq   1+ba$ for $a,b$ non-negative,
  \eqref{rate:rec_one:eqn} will imply $\enc{u_t/u_{t+1}}^{c_1} \geq
  \enc{1 + c_0u_t^{c_1}}^{c_1} \geq 1 + c_1c_0u_t^{c_1}$, which will
  complete our proof.
  To show \eqref{rate:rec_one:eqn}, we first rearrange the condition on
  $u_t,u_{t+1}$ to obtain
  \[
  u_{t+1} \leq u_t\enc{1-c_0u_t^{c_1}} \implies
  \frac{u_t}{u_{t+1}} \geq \frac{1}{1-c_0u_t^{c_1}}.
  \]
  Applying the fact $\enc{1+c_0u_t^{c_1}}\enc{1-c_0u_t^{c_1}} \leq 1$
  to the previous equation we get,
  \[
  \frac{u_t}{u_{t+1}} \geq 1+c_0u_t^{c_1}.
  \]
  Since $c_1 \geq 0$, we may raise both sides of the above inequality
  to the power of $c_1$ to show \eqref{rate:rec_one:eqn}, finishing
  our proof.
\end{proof}
}
\longv{
\subsection*{Proof of Lemma~\ref{rate:const_bnd:lem}}

In this section we prove Lemma~\ref{rate:const_bnd:lem}, by separately
bounding the quantities $\lmin^{-1}$, $\gamma^{-1}$ and $\mu_{\max}$,
through a sequence of Lemmas.  We will use the next result repeatedly.
\begin{lemma}
  \label{rate:inv:lem}
  If $\A$ is an $n\times n$ invertible matrix with $-1,0,+1$ entries,
  then $\min_{\vx: \norm{\vx} = 1} \norm{\A\vx}$ is at least $1/n! =
  2^{-O(n\ln n)}$.
\end{lemma}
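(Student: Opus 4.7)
The plan is to recognize that the quantity being bounded is exactly $1/\sigma_{\min}(\A) = 1/\|\A^{-1}\|_2$, where $\|\cdot\|_2$ denotes the spectral (operator) norm. So the task reduces to proving $\|\A^{-1}\|_2 \le n!$, and for this I would bound $\A^{-1}$ entrywise using Cramer's rule and then pass from Frobenius norm to spectral norm.

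First I would note that since $\A$ is invertible, substituting $\vy = \A\vx$ gives
\[
\min_{\norm{\vx}=1}\norm{\A\vx} = \frac{1}{\max_{\norm{\vy}=1}\norm{\A^{-1}\vy}} = \frac{1}{\norm{\A^{-1}}_2}.
\]
So it suffices to show $\norm{\A^{-1}}_2\le n!$.

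Next, by Cramer's rule, $(\A^{-1})_{ij} = (-1)^{i+j}\,C_{ji}/\det(\A)$, where $C_{ji}$ is the $(j,i)$-minor, itself the determinant of an $(n-1)\times(n-1)$ submatrix of $\A$ with entries in $\{-1,0,+1\}$. Two facts control the size of this ratio. Since $\A$ has integer entries and is invertible, $|\det(\A)|\ge 1$. And by the Leibniz formula, any such minor is a signed sum of $(n-1)!$ products of $n-1$ entries drawn from $\{-1,0,+1\}$; each product is at most $1$ in absolute value, so $|C_{ji}|\le (n-1)!$. Combining these, $|(\A^{-1})_{ij}|\le (n-1)!$ for every $i,j$.

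Finally, I would bound the spectral norm by the Frobenius norm:
\[
\norm{\A^{-1}}_2 \;\le\; \norm{\A^{-1}}_F \;=\; \sqrt{\sum_{i,j}(\A^{-1})_{ij}^2} \;\le\; \sqrt{n^2\cdot((n-1)!)^2} \;=\; n\cdot (n-1)! \;=\; n!.
\]
Putting this together with the first display yields $\min_{\norm{\vx}=1}\norm{\A\vx}\ge 1/n!$, and Stirling's approximation converts $1/n!$ to $2^{-O(n\ln n)}$. I do not foresee a real obstacle: the two-line entrywise bound via Cramer's rule plus a single Frobenius-to-spectral inequality suffice. The only thing to be careful of is using $|\det(\A)|\ge 1$ (which requires integrality and invertibility together) and not conflating the minor bound $(n-1)!$ with anything sharper like Hadamard's inequality, since the coarser Leibniz bound already produces exactly the target rate $n!$.
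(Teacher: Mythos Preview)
Your proof is correct and follows essentially the same approach as the paper: reduce to bounding $\norm{\A^{-1}}_2$, use the adjugate/Cramer formula together with $|\det(\A)|\ge 1$ to get the entrywise bound $|(\A^{-1})_{ij}|\le (n-1)!$, and conclude $\norm{\A^{-1}}_2\le n!$. The only minor difference is that you make the Frobenius-to-spectral step explicit, whereas the paper simply asserts $\norm{\mathrm{adj}(\A)\vx}\le n!\norm{\vx}$.
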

\begin{proof}
  It suffices to show that $\norm{\A^{-1}\vx} \leq n!$ for any $\vx$
  with unit norm. Now $\A^{-1} = \rm{adj}(\A)/\det(\A)$ where
  $\rm{adj}(\A)$ is the adjoint of $\A$, whose $i,j$-th entry is the
  $i,j$th cofactor of $\A$ (given by $(-1)^{i+j}$ times the determinant
  of the $n-1 \times n-1$ matrix obtained by removing the $i$th row
  and $j$th column of $\A$), and $\det(\A)$ is the determinant of
  $\A$. The determinant of any $k\times k$ matrix $G$ can be written as
  $\sum_{\sigma}\sgn(\sigma)\prod_{i=1}^kG(i,\sigma(j))$, where
  $\sigma$ ranges over all the permutations of $1,\ldots,
  k$. Therefore each entry of $\rm{adj}(\A)$ is at most $(n-1)!$, and
  the $\det(\A)$ is a non-zero integer. Therefore $\norm{\A^{-1}\vx} =
  \norm{\rm{adj}(\A)\vx}/\det(\A) \leq n!\norm{\vx}$, and the proof is
  complete.
\end{proof}
We first show our bound holds for $\lmin$.
\begin{lemma}
  \label{rate:lmin:lem}
  Suppose $\M$ has $-1,0,+1$ entries, and let $\M_F,\lmin$ be as in
  Corollary~\ref{rate:rate:cor}. Then $\lmin \geq 1/m!$.
\end{lemma}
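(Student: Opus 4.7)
The plan is to combine Cauchy interlacing for singular values of submatrices with Lemma~\ref{rate:inv:lem}, which is already available for $\{-1,0,+1\}$ square invertible matrices. Concretely, I want to reduce the bound on $\lmin$ to an application of Lemma~\ref{rate:inv:lem} on a carefully chosen invertible square submatrix of $\M_F$ of size equal to the rank of $\M_F$.

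First I would set $r = \text{rank}(\M_F)$. Since $\M_F$ has at most $|F| \leq m$ rows, we have $r \leq m$. By the definition of rank, we can select row indices $I$ and column indices $J$ with $|I|=|J|=r$ such that the submatrix $\A = \M_F[I,J]$ is an $r \times r$ invertible matrix. Crucially $\A$ inherits the $\{-1,0,+1\}$ entries of $\M_F$, so Lemma~\ref{rate:inv:lem} applies and gives
\[
\sigma_{\min}(\A) \;=\; \min_{\vx:\norm{\vx}=1}\norm{\A\vx} \;\geq\; \frac{1}{r!} \;\geq\; \frac{1}{m!}.
\]

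Second, I would invoke the Cauchy interlacing theorem for singular values of submatrices: if a matrix $B$ is obtained from another matrix $C$ by deleting some rows and/or columns, then $\sigma_i(B) \leq \sigma_i(C)$ for every index $i$ up to $\text{rank}(B)$ (here $\sigma_i$ denotes the $i$-th largest singular value). Applying this with $B=\A$ and $C=\M_F$ gives $\sigma_r(\A) \leq \sigma_r(\M_F)$. Since $\M_F$ has rank exactly $r$, its smallest positive singular value is $\sigma_r(\M_F)$, and by definition $\lmin = \sigma_r(\M_F)$. Chaining the two inequalities yields
\[
\lmin \;=\; \sigma_r(\M_F) \;\geq\; \sigma_r(\A) \;=\; \sigma_{\min}(\A) \;\geq\; \frac{1}{r!} \;\geq\; \frac{1}{m!},
\]
which is exactly the claim.

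The only subtle step is the interlacing inequality for singular values under row and column deletion; this is standard (it follows, for instance, from the Courant--Fischer min-max characterization of singular values applied to $\M_F^T\M_F$ and the fact that restricting to a coordinate subspace can only decrease the min of a Rayleigh quotient), so I do not expect it to be a real obstacle. The rest of the argument is purely a selection of an invertible submatrix of maximal size, which is guaranteed by the definition of matrix rank. No additional assumptions beyond those already in the statement are needed.
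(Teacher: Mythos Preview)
Your argument is correct, but it takes a genuinely different route from the paper. Both proofs ultimately reduce to Lemma~\ref{rate:inv:lem} applied to an $r\times r$ invertible $\{-1,0,+1\}$ submatrix, where $r=\text{rank}(\M_F)$. The difference is in how the reduction is carried out. The paper works by hand: it writes $\M_F = \A'[\vI\,|\,\B]$ with $\A'$ the first $r$ (independent) columns, explicitly characterizes $(\ker\M_F)^\bot$, and then shows for any $\vx\in(\ker\M_F)^\bot$ that $\norm{\M_F\vx}\geq\norm{\A_k(\vx_k+\B\vx_{-k})}\geq\frac{1}{r!}\norm{\vx_k+\B\vx_{-k}}\geq\frac{1}{r!}\norm{\vx}$, where $\A_k$ is a full-rank $r\times r$ block of $\A'$. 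Your proof replaces this explicit null-space computation with a one-line appeal to Cauchy interlacing for singular values under row/column deletion, giving $\lmin=\sigma_r(\M_F)\geq\sigma_r(\A)=\sigma_{\min}(\A)\geq 1/r!$. Your route is shorter and conceptually cleaner, at the cost of invoking an external (though standard) interlacing result; the paper's route is entirely self-contained using only elementary linear algebra. Both yield the same bound $\lmin\geq 1/m!$.
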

\begin{proof}
  Let $\A$ denote the matrix $\M_F$. It suffices to show that $\A$ does
  not squeeze too much the norm of any vector orthogonal to the
  null-space $\ker \A \eqdef \set{\val:\A\val = \vzero}$ of $\A$, i.e. $
  \norm{\A\vlam} \geq (1/m!)\norm{\vlam}$ for any $\vlam \in \ker
  \A^{\bot}$. We first characterize $\ker \A^{\bot}$ and then study how
  $\A$ acts on this subspace.

  Let the rank of $\A$ be $k\leq m$ (notice $\A=\M_F$ has $N$ columns and
  fewer than $m$ rows). Without loss of generality, assume the first
  $k$ columns of $\A$ are independent. Then every column of $\A$ can be
  written as a linear combination of the first $k$ columns of $\A$, and
  we have $ \A = \A' [ \vI | \B ]$ (that is, the matrix $\A$ is the product
  of matrices $\A'$ and $[\vI|\B]$), where $\A'$ is the submatrix
  consisting of the first $k$ columns of $\A$, $\vI$ is the $k \times k$
  identity matrix, and $\B$ is some $k \times (N-k)$ matrix of linear
  combinations (here $|$ denotes concatenation). The null-space of $\A$
  consists of $\vx$ such that $\vzero = \A\vx = \A'[\vI | \B] \vx =
  \A'(\vx_k + \B\vx_{-k})$, where $\vx_k$ is the first $k$ coordinates
  of $\vx$, and $\vx_{-k}$ the remaining $N-k$ coordinates. Since the
  columns of $\A'$ are independent, this happens if and only if $\vx_k
  = -\B\vx_{-k}$. Therefore $\ker \A = \set{(-\B\vz,\vz):
    \vz\in\R^{N-k}}$. Since a vector $\vx$ lies in the orthogonal
  subspace of $\ker \A$ if it is orthogonal to every vector in the
  latter, we have
  \[
  \ker \A^{\bot} =  \set{(\vx_k,\vx_{-k}):
    \dotp{\vx_k}{\B\vz} = \dotp{\vx_{-k}}{\vz},
    \forall \vz\in \R^{N-K}}.
  \]
  We next see how $\A$ acts on this subspace. Recall $\A = \A'[\vI|\B]$
  where $\A'$ has $k$ independent columns. By basic linear algebra, the
  row rank of $\A'$ is also $k$, and assume without loss of generality
  that the first $k$ rows of $\A'$ are independent. Denote by $\A_k$ the
  $k\times k$ submatrix of $\A'$ formed by these $k$ rows. Then for any
  vector $\vx$,
  \[
  \norm{\A\vx} = \norm{\A'[\vI|\B]\vx} = \norm{\A'(\vx_k + \B\vx_{-k})} \geq
  \norm{\A_k(\vx_k + \B\vx_{-k})} \geq \frac{1}{k!}\norm{\vx_k + \B\vx_{-k}},
  \]
  where the last inequality follows from Lemma~\ref{rate:inv:lem}. To
  finish the proof, it suffices to show that $\norm{\vx_k + \B\vx_{-k}}
  \geq \norm{\vx}$ for $\vx\in\ker \A^{\bot}$. Indeed, by expanding out
  $\norm{\vx_k + \B\vx_{-k}}^2$ as inner product with itself, we have
  \[
  \norm{\vx_k + \B\vx_{-k}}^2 = \norm{\vx_k}^2 + \norm{\B\vx_{-k}}^2 +
  2\dotp{\vx_k}{\B\vx_{-k}} \geq \norm{\vx_k}^2 + 2\norm{\vx_{-k}}^2
  \geq \norm{\vx}^2,
  \]
  where the first inequality follows since $\vx\in\ker \A^{\bot}$
  implies $\dotp{\vx_k}{\B\vx_{-k}} = \dotp{\vx_{-k}}{\vx_{-k}}$.
\end{proof}
To show the bounds on $\gamma^{-1}$ and $\mu_{\max}$, we will need an
intermediate result.
\begin{lemma}
  \label{rate:ineq:lem}
  Suppose $\A$ is a matrix, and $\vb$ a vector, both with $-1,0,1$
  entries. If $\A\vx = \vb, \vx \geq \vzero$ is solvable, then there is
  a solution satisfying $\norm{\vx} \leq k\cdot k!$, where
  $k=\rm{rank}(\A)$. 
\end{lemma}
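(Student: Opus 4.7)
The plan is to exploit linear programming basic feasible solutions together with Cramer's rule, a standard recipe for bounding the size of integer solutions. Given any nonnegative solution $\vx$ to $\A\vx = \vb$, I will first replace it by a basic feasible solution of the polyhedron $P = \{\vx \geq \vzero : \A\vx = \vb\}$; a standard LP argument says that if this polyhedron is nonempty then it has an extreme point, and any extreme point has at most $k = \mathrm{rank}(\A)$ nonzero coordinates, all of whose corresponding columns of $\A$ are linearly independent. (Concretely: keep zeroing out coordinates along directions in the kernel of $\A$ restricted to the current support until the support columns become independent; at each step $\vx \geq \vzero$ can be preserved.)

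Next, restrict attention to the support $S$, of size $s \leq k$, and let $\A_S$ be the corresponding $m \times s$ submatrix, whose columns are independent. Then $\vx_S$ is the unique solution to $\A_S \vx_S = \vb$. Because $\A_S$ has rank $s$, I can select $s$ independent rows of $\A_S$ to obtain an invertible $s \times s$ submatrix $\A_S'$ with corresponding subvector $\vb'$ of $\vb$, and then $\vx_S = (\A_S')^{-1} \vb'$. By Cramer's rule, $(\vx_S)_j = \det(\A_{S,j}')/\det(\A_S')$, where $\A_{S,j}'$ is obtained by replacing the $j$th column of $\A_S'$ with $\vb'$.

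Now the bookkeeping: since every entry of $\A_{S,j}'$ lies in $\{-1,0,+1\}$, the Leibniz formula $\det(\A_{S,j}') = \sum_{\sigma} \sgn(\sigma) \prod_i (\A_{S,j}')_{i,\sigma(i)}$ gives $|\det(\A_{S,j}')| \leq s!$; and since $\A_S'$ is an integer matrix with nonzero determinant, $|\det(\A_S')| \geq 1$. Hence each coordinate of $\vx_S$ has absolute value at most $s! \leq k!$, so $\norm{\vx} = \norm{\vx_S} \leq \sqrt{s}\cdot k! \leq \sqrt{k}\cdot k! \leq k\cdot k!$, as required.

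I do not anticipate a serious obstacle; the only bit to watch is the initial reduction to a basic feasible solution when $\A$ is not full row rank, but this is handled by discarding redundant rows (they are automatically satisfied by feasibility of $\vb$) before invoking the standard extreme-point argument. Everything else is a routine application of Cramer's rule plus the $\{-1,0,+1\}$ hypothesis to bound determinants.
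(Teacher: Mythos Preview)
Your proposal is correct and follows essentially the same route as the paper: the paper also passes to a solution with maximally many zero coordinates (the direct version of your basic-feasible-solution reduction), observes that the surviving columns are independent, restricts to an invertible square submatrix, and then bounds $\norm{\tilde{\vx}}$ via the adjoint/determinant formula (their Lemma~\ref{rate:inv:lem}), which is exactly your Cramer's-rule step. The only cosmetic difference is phrasing---LP extreme points versus ``maximum number of zeroes''---and you in fact obtain the slightly sharper $\sqrt{k}\cdot k!$ before relaxing to $k\cdot k!$.
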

\begin{proof}
  Pick a solution $\vx$ with maximum number of zeroes. Let $J$ be the
  set of coordinates for which $x_i$ is zero. We first claim that
  there is no other solution $\vx'$ which is also zero on the set
  $J$. Suppose there were such an $\vx'$. Note any point $\vp$ on the
  infinite line joining $\vx,\vx'$ satisfies $\A\vp = \vb$, and $\vp_J
  = \vzero$ (that is, $p_{i'}=0$ for $i'\in J$). If $i$ is any
  coordinate not in $J$ such that $x_i \neq x'_i$, then for some point
  $\vp^i$ along the line, we have $\vp^i_{J\cup\set{i}} =
  \vzero$. Choose $i$ so that $\vp^i$ is as close to $\vx$ as
  possible. Since $\vx \geq \vzero$, by continuity this would also
  imply that $\vp^i \geq \vzero$. But then $\vp^i$ is a solution with
  more zeroes than $\vx$, a contradiction.

  The claim implies that the reduced problem $\A'\tilde{\vx} = \vb,
  \tilde{\vx} \geq \vzero$, obtained by substituting $\vx_J = \vzero$,
  has a unique solution. Let $k=\rm{rank}(\A')$, $\A_k$ be a $k\times k$
  submatrix of $\A'$ with full rank, and $\vb_k$ be the restriction of
  $\vb$ to the rows corresponding to those of $\A_k$ (note that $\A'$,
  and hence $\A_k$, contain only $-1,0,+1$ entries). Then,
  $\A_k\tilde{\vx} = \vb_k, \tilde{\vx} \geq \vzero$ is equivalent to
  the reduced problem. In particular, by uniqueness, solving
  $\A_k\tilde{\vx} = \vb_k$ automatically ensures the obtained $\vx =
  (\tilde{\vx}, \vzero_J)$ is a non-negative solution to the original
  problem, and satisfies $\norm{\vx} = \norm{\tilde{\vx}}$. But, by
  Lemma~\ref{rate:inv:lem},
  \[
  \norm{\tilde{\vx}} \leq k!\norm{\A_k\tilde{\vx}} = k!\norm{\vb_k}
  \leq k\cdot k!.
  \]
\end{proof}
The bound on $\gamma^{-1}$ follows easily.
\begin{lemma}
  \label{rate:gam:lem}
  Let $\gamma,\alf$ be as in Item~\ref{rate:one:dec} of
  Lemma~\ref{rate:dec:lem}. Then $\alf$ can be chosen such that
  $\gamma \geq 1/\enc{\sqrt{N}m\cdot m!} \geq 2^{-O(m\ln m)}$.
\end{lemma}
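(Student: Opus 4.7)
The plan is to recast the existence of $\alf$ as a linear feasibility problem with $\set{-1,0,+1}$ coefficients and then invoke Lemma~\ref{rate:ineq:lem} to control the norm of a feasible point. Rather than searching directly for a unit $\ell_1$-norm $\alf$, first seek any $\vx\in\R^N$ satisfying $(\M\vx)_i\geq 1$ for $i\in Z$ and $(\M\vx)_i = 0$ for $i\in F$; such an $\vx$ exists because Item~\ref{rate:one:dec} of the decomposition lemma guarantees $\alf/\gamma$ is one. Once we have $\vx$, the rescaling $\alf = \vx/\norm{\vx}_1$ achieves margin at least $1/\norm{\vx}_1$ on every example in $Z$ and zero margin on $F$, so it suffices to produce an $\vx$ whose $\ell_1$-norm is $2^{O(m\ln m)}$.

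Next I would convert this into the normal form required by Lemma~\ref{rate:ineq:lem}. Split $\vx = \vx^+ - \vx^-$ with $\vx^+,\vx^-\geq\vzero$, and for each $i\in Z$ introduce a non-negative slack $s_i\geq 0$ turning $(\M\vx)_i\geq 1$ into $(\M\vx^+)_i - (\M\vx^-)_i - s_i = 1$. Collecting $\vy = (\vx^+,\vx^-,\vs)\in\R^{2N+|Z|}_{\geq 0}$, the combined system becomes $\A\vy = \vb$ with $\vy\geq\vzero$, where the rows of $\A$ are obtained by stacking $(\M_{ij},-\M_{ij},-\vI_Z)$ for $i\in Z$ and $(\M_{ij},-\M_{ij},\vzero)$ for $i\in F$, so every entry of $\A$ lies in $\set{-1,0,+1}$, and every entry of $\vb$ lies in $\set{0,1}$.

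Now Lemma~\ref{rate:ineq:lem} applies: since $\A$ has at most $m$ rows, $k\eqdef\rm{rank}(\A)\leq m$, so there is a non-negative solution with $\norm{\vy}_2 \leq k\cdot k! \leq m\cdot m!$. From this I can recover a bound on the original $\vx$: assuming without loss of generality that $\vx^+$ and $\vx^-$ have disjoint support,
\[
\norm{\vx}_1 = \norm{\vx^+}_1 + \norm{\vx^-}_1 \leq \sqrt{2N}\,\norm{(\vx^+,\vx^-)}_2 \leq \sqrt{2N}\,\norm{\vy}_2 \leq \sqrt{2N}\cdot m\cdot m!,
\]
which is $2^{O(m\ln m)}$ since $N\leq 3^m$ for $\set{-1,0,+1}$ feature matrices. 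Rescaling then yields $\gamma \geq 1/\norm{\vx}_1$, matching the claimed order (a factor of $\sqrt{2}$ can be absorbed into the $O(m\ln m)$ exponent).

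I do not anticipate a major obstacle beyond the bookkeeping of passing from the $\ell_2$ bound on $\vy$ (which mixes the original coordinates with the slacks) back to the $\ell_1$ bound on $\vx$ alone; the Cauchy--Schwarz step above handles it cleanly. The only delicate point is verifying that the decomposition into $\vx^+,\vx^-$ with disjoint supports is always achievable for some minimal-norm $\vy$, which follows from the standard observation that if both $x_j^+$ and $x_j^-$ are positive one can subtract $\min(x_j^+,x_j^-)$ from each without changing $\vx$ or violating any constraint.
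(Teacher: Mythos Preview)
Your approach is correct and essentially identical to the paper's: both reformulate Item~\ref{rate:one:dec} as a nonnegative linear system with $\{-1,0,+1\}$ coefficients, add slacks for the $Z$-inequalities, invoke Lemma~\ref{rate:ineq:lem} to bound the $\ell_2$-norm of a solution by $m\cdot m!$, and then pass to an $\ell_1$ bound via Cauchy--Schwarz. The only difference is how nonnegativity of the decision variables is obtained. The paper assumes (as is standard for AdaBoost, since step sizes may be negative) that the feature matrix $\M$ is closed under column complementation, so one may take $\alf\geq\vzero$ directly and work with $N$ variables plus slacks; you instead split $\vx=\vx^+-\vx^-$, which costs an extra factor of $\sqrt{2}$ in the dimension count. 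Your route has the mild advantage of not relying on the closure-under-negation convention, while the paper's route recovers the exact constant $1/(\sqrt{N}\,m\cdot m!)$ stated in the lemma rather than $1/(\sqrt{2N}\,m\cdot m!)$; as you note, this discrepancy is irrelevant for the $2^{-O(m\ln m)}$ conclusion. Your remark about disjoint supports is fine but unnecessary, since already $\norm{\vx}_1\leq\norm{\vx^+}_1+\norm{\vx^-}_1$ by the triangle inequality.
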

\begin{proof}
  We know that $\M(\alf/\gamma) = \vb$, where $\vb$ is zero on the set
  $\fl$ and at least $1$ for every example in the zero loss set $\zl$ (as
  given by Item~\ref{rate:one:dec} of Lemma~\ref{rate:dec:lem}). Since
  $\M$ is closed under complementing columns, we may assume in addition
  that $\alf \geq \vzero$. Introduce slack variables $z_i$ for $i\in
  \zl$, and let $\tilde{\M}$ be $\M$ augmented with the columns $-\ve_i$
  for $i\in \zl$, where $\ve_i$ is the standard basis vector with $1$ on
  the $i$th coordinate and zero everywhere else. Then, by setting $\vz
  = \M(\alf/\gamma) - \vb$, we have a solution $(\alf/\gamma,\vz)$ to
  the system $\tilde{\M}\vx = \vb, \vx \geq \vzero$. Applying
  Lemma~\ref{rate:ineq:lem}, we know there exists some solution
  $(\vy,\vz')$ with norm at most $m\cdot m!$ (here $\vz'$ corresponds
  to the slack variables). Observe that $\vy/\norm{\vy}_1$ is a valid
  choice for $\alf$ yielding a $\gamma$ of $1/\norm{\vy}_1 \geq
  1/(\sqrt{N}m\cdot m!)$.
\end{proof}
To show the bound for $\mu_{\max}$ we will need a version of
Lemma~\ref{rate:ineq:lem} with strict inequality.
\begin{corollary}
  \label{rate:ineq:cor}
  Suppose $\A$ is a matrix, and $\vb$ a vector, both with $-1,0,1$
  entries. If $\A\vx = \vb, \vx > \vzero$ is solvable, then there is
  a solution satisfying $\norm{\vx} \leq 1+k\cdot k!$, where
  $k=\rm{rank}(\A)$.   
\end{corollary}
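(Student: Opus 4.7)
The plan is to derive Corollary~\ref{rate:ineq:cor} as a short perturbation corollary of Lemma~\ref{rate:ineq:lem}. Since the hypothesis supplies a strictly positive solution $\vx^*$ to $\A\vx = \vb$, the weaker non-strict system $\A\vx=\vb$, $\vx\geq\vzero$ is certainly solvable, and $\A,\vb$ have $-1,0,1$ entries by assumption. Lemma~\ref{rate:ineq:lem} therefore supplies some $\vy\geq\vzero$ with $\A\vy = \vb$ and $\norm{\vy}\leq k\cdot k!$, where $k=\mathrm{rank}(\A)$. This $\vy$ may have some zero coordinates and thus need not satisfy the strict positivity requirement; the idea is simply to nudge it toward $\vx^*$ just enough to move every coordinate off zero, while paying at most an additive constant $1$ in the norm.

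Concretely, I would form the convex combination $\vx(\eps) = (1-\eps)\vy + \eps\vx^*$ for $\eps\in(0,1]$. Linearity of $\A$ immediately gives $\A\vx(\eps)=\vb$, and coordinate-wise, since $y_i\geq 0$ and $x_i^*>0$ for each $i$, the combination is strictly positive for every $\eps>0$. Choosing $\eps = \min\set{1,\,1/\norm{\vx^*}}$ keeps $\eps>0$, and the triangle inequality together with $\eps\norm{\vx^*}\leq 1$ yields
\[
\norm{\vx(\eps)}\leq(1-\eps)\norm{\vy}+\eps\norm{\vx^*}\leq \norm{\vy}+1\leq k\cdot k!+1,
\]
so $\vx(\eps)$ is the required strictly positive solution with norm at most $1+k\cdot k!$.

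There is no substantive obstacle here, since Lemma~\ref{rate:ineq:lem} has already done all the heavy lifting of bounding a non-negative solution. The only point that requires care is to verify that the perturbation parameter can be chosen both small enough to add at most $1$ to the norm and large enough to push every coordinate strictly off zero; the explicit cap $\eps\leq 1/\norm{\vx^*}$ accomplishes exactly this, reflecting the fact that strict positivity is an open condition and so any bounded non-negative solution can be moved into the positive orthant at arbitrarily small additive cost in norm.
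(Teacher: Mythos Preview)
Your proof is correct and follows essentially the same approach as the paper: both obtain the bounded non-negative solution from Lemma~\ref{rate:ineq:lem} and then move along the segment toward the given strictly positive solution to restore strict positivity at negligible cost in norm. Your version is in fact slightly more explicit, since you specify the perturbation parameter $\eps=\min\{1,1/\norm{\vx^*}\}$ whereas the paper only argues that points arbitrarily close to the bounded solution on the segment are strictly positive with norms approaching $k\cdot k!$.
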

\begin{proof}
  Using Lemma~\ref{rate:ineq:lem}, pick a solution to $\A\vx = \vb, \vx
  \geq \vzero$ with norm at most $k\cdot k!$. If $\vx > \vzero$, then
  we are done. Otherwise let $\vy > \vzero$ satisfy $\A\vx = \vb$, and
  consider the segment joining $\vx$ and $\vy$. Every point $\vp$ on
  the segment satisfies $\A\vp = b$. Further any coordinate becomes
  zero at most once on the segment. Therefore, there are points
  arbitrarily close to $\vx$ on the segment with positive coordinates
  that satisfy the equation, and these have norms
  approaching that of $\vx$.
\end{proof}
We next characterize the feature matrix $\M_\fl$ restricted to the
finite-loss examples, which might be of independent interest.
\begin{lemma}
  \label{rate:sep:lem}
  If $\M_\fl$ is the feature matrix restricted to the finite-loss
  examples $\fl$ (as given by Item~\ref{rate:two:dec} of
  Lemma~\ref{rate:dec:lem}), then there exists a positive linear
  combination $\vy > \vzero$ such that $\M_\fl^T\vy = \vzero$.
\end{lemma}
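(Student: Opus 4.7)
The plan is to invoke a theorem of alternatives, specifically Stiemke's lemma (the strict-positivity dual of Gordan's theorem). It states that exactly one of the following holds for the matrix $\M_F$:
\begin{enumerate}
\item[(a)] There exists $\vx \in \R^N$ such that $\M_F \vx \geq \vzero$ and $\M_F \vx \neq \vzero$.
\item[(b)] There exists $\vy > \vzero$ such that $\M_F^T \vy = \vzero$.
\end{enumerate}
The entire task therefore reduces to ruling out alternative (a): no linear combination of weak hypotheses can produce non-negative margins on every example in $F$ with at least one strict inequality.

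To rule out (a), I would argue by contradiction using the optimality of $\vals$ from Item~\ref{rate:two:dec} of the decomposition lemma. Suppose some $\vx$ satisfied $\M_F \vx \geq \vzero$ with $(\M_F\vx)_{i_0} > 0$ for some $i_0 \in F$. Consider the perturbed combination $\vals + c\vx$ for $c > 0$. Its margin on each $i \in F$ is $(\M_F\vals)_i + c(\M_F\vx)_i$, which is no smaller than $(\M_F\vals)_i$, and is strictly larger on example $i_0$. Consequently $\loss{\vals + c\vx}{F} < \loss{\vals}{F} = K$ for any $c > 0$, contradicting the optimality of $\vals$ on $F$ asserted by Item~\ref{rate:two:dec}. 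Hence (a) fails and (b) must hold, giving the desired strictly positive $\vy$ with $\M_F^T\vy = \vzero$.

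The only subtlety, and the main thing to be careful about, is the strictness in both the hypothesis of Stiemke's lemma and the conclusion. A weaker alternative like Gordan's or Farkas' lemma would only yield a nontrivial nonnegative $\vy$, which would not suffice. Stiemke's lemma is exactly the right tool here because it pairs strictly positive $\vy$ with strict inequality in the primal. One can also derive this directly from Farkas without quoting Stiemke by name: the optimality of $\vals$ on $F$ means that the gradient condition $\sum_{i \in F} \loss{\vals}{i} (\M_F)_{ij} = 0$ holds for every coordinate $j$, i.e., the vector $\vy^\ast \in \R^{|F|}$ defined by $y^\ast_i = \loss{\vals}{i} > 0$ already witnesses $\M_F^T \vy^\ast = \vzero$. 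This direct construction is in fact even cleaner than the duality argument, since $\vals$ is known to be finite (so each $\loss{\vals}{i} = e^{-(\M_F\vals)_i}$ is a strictly positive real number) and the first-order stationarity of $\vals$ is automatic.

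Given the directness of the second route, I would actually present the proof as follows: take $\vals$ from Item~\ref{rate:two:dec}, set $y_i = \loss{\vals}{i}$ for each $i \in F$, observe $y_i > 0$ since $\vals$ is finite, and verify $\M_F^T \vy = \vzero$ by differentiating $\loss{\vals}{F}$ coordinatewise and using that $\vals$ minimizes this quantity over all of $\R^N$ (so the gradient vanishes). The duality viewpoint is a nice backup, but the stationarity argument avoids invoking any external theorem and makes the positivity of $\vy$ transparent.
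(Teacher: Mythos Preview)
Your proposal is correct, and both routes you sketch are valid. The paper's own proof is different in two respects worth noting.

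First, the paper rules out alternative (a) using Item~3 of the decomposition lemma (finiteness of $\mu_{\max}$) rather than Item~2 (optimality of $\vals$): it observes that if some $\vx$ gave $\M_F\vx \geq \vzero$ with a strictly positive coordinate, then arbitrarily large multiples $c\vx$ would have loss at most $m$ on $F$ yet unbounded margin, violating the existence of a finite $\mu_{\max}$. Your argument via Item~2 is equally valid and arguably more direct, since optimality of a finite $\vals$ is the more primitive fact (Item~3 is itself derived from it in the paper).

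Second, the paper does not invoke Stiemke's lemma by name but instead applies Hahn--Banach to separate the range of $\M_F$ from the open simplex $\Delta_F$, then extracts $\vy$ from the separating hyperplane. This is of course equivalent content to the Stiemke alternative.

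Your second route---setting $y_i = e^{-(\M_F\vals)_i}$ and reading off $\M_F^T\vy = \vzero$ from first-order stationarity of $\vals$---is the cleanest of the three arguments: it is constructive, avoids any separation theorem, and makes the strict positivity of $\vy$ immediate. The paper does not take this route. Note, however, that for the downstream application (Lemma~\ref{rate:mumax:lem}) one needs a $\vy$ with controlled norm, and the entries $e^{-(\M_F\vals)_i}$ can be large; so the paper's subsequent appeal to Corollary~\ref{rate:ineq:cor} to find a small $\vy$ would still be required regardless of which existence argument you use here.
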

\begin{proof}
  Item~\ref{rate:three:dec} of the decomposition lemma states that
  whenever the loss $\loss{\vx}{\fl}$ of a vector is bounded by $m$,
  then the largest margin $\max_{i\in \fl} (\M_\fl\vx)_i$ is at most
  $\mu_{\max}$. This implies that there is no vector $\vx$ such that
  $\M_\fl\vx \geq \vzero$ and at least one of the margins $(\M_\fl\vx)_i$ is
  positive; otherwise, an arbitrarily large multiple of $\vx$ would
  still have loss at most $m$, but margin exceeding the constant
  $\mu_{\max}$. In other words, $\M_\fl\vx \geq \vzero$ implies $\M_\fl\vx =
  \vzero$. In particular, the subspace of possible margin vectors
  $\set{\M_\fl\vx: \vx\in \R^N}$ is disjoint from the convex set
  $\Delta_\fl$ of distributions over examples in $\fl$, which consists of
  points in $\R^{|\fl|}$ with all non-negative and at least one positive
  coordinates. By the Hahn-Banach Separation theorem, there exists a
  hyperplane separating these two bodies, i.e. there is a $\vy\in
  \R^{|\fl|}$, such that for any $\vx\in\R^N$ and $\vp\in\Delta_\fl$, we
  have $\dotp{\vy}{\M_\fl\vx} \leq 0 < \dotp{\vy}{\vp}$. By choosing $\vp
  = \ve_i$ for various $i\in \fl$, the second inequality yields $\vy >
  \vzero$. Since $\M_\fl\vx = -\M_\fl(-\vx)$, the first inequality implies
  that equality holds for all $\vx$, i.e. $\vy^T\M_\fl = \vzero^T$.
\end{proof}
We can finally upper-bound $\mu_{\max}$.
\begin{lemma}
  \label{rate:mumax:lem}
  Let $\fl, \mu_{\max}$ be as in
  Items~\ref{rate:two:dec},\ref{rate:three:dec} of the decomposition
  lemma. Then $\mu_{\max} \leq \ln m \cdot
  |\fl|^{1.5}\cdot |\fl|! \leq 2^{O(m\ln m)}$.
\end{lemma}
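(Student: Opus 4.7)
The plan is to combine Lemma~\ref{rate:sep:lem}, which produces a strictly positive vector $\vy > \vzero$ with $\M_\fl^T \vy = \vzero$, with the small-solution guarantee of Corollary~\ref{rate:ineq:cor}. Given any combination $\val$ with $\loss{\val}{\fl} \leq m$, each margin satisfies $(\M_\fl\val)_i \geq -\ln m$, while the separating identity yields $\sum_{i\in\fl} y_i (\M_\fl\val)_i = 0$, forcing the positive margins to balance the negative ones. For any fixed $i^*\in\fl$, isolating the $i^*$-th term gives
\[
y_{i^*}(\M_\fl\val)_{i^*} \;=\; -\sum_{i\neq i^*} y_i (\M_\fl\val)_i \;\leq\; \ln m \cdot \sum_{i\neq i^*} y_i,
\]
so it suffices to exhibit a $\vy$ for which the ratio $\sum_{i\neq i^*} y_i / y_{i^*}$ is polynomially controlled.

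Next, I would choose $\vy$ by applying Corollary~\ref{rate:ineq:cor} to the augmented system $\A\vy = \vb$, where $\A$ is the matrix obtained by stacking $\M_\fl^T$ on top of the standard basis row $\ve_{i^*}^T$ and $\vb = (\vzero^T, 1)^T$. All entries of $\A$ and $\vb$ lie in $\set{-1,0,+1}$, and this system admits a strictly positive solution: take any $\vy' > \vzero$ produced by Lemma~\ref{rate:sep:lem} and rescale by $1/y'_{i^*}$. The corollary then returns a strictly positive $\vy$ with $\norm{\vy} \leq 1 + |\fl|\cdot|\fl|!$, since $\mathrm{rank}(\A) \leq |\fl|$, and by construction $y_{i^*} = 1$.

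With this $\vy$ in hand, Cauchy--Schwarz gives $\sum_{i\neq i^*} y_i \leq \sqrt{|\fl|-1}\cdot\norm{\vy} \leq |\fl|^{1.5}\cdot |\fl|!$ up to an absolute constant, and substituting into the displayed inequality bounds $(\M_\fl\val)_{i^*}$ by $\ln m \cdot |\fl|^{1.5}\cdot|\fl|!$. Since $i^*\in\fl$ was arbitrary, this bound passes to $\mu_{\max}$. Finally, $|\fl|\leq m$ together with Stirling's approximation yields $|\fl|^{1.5}\cdot|\fl|! \leq 2^{O(m\ln m)}$, completing both inequalities in the statement.

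The main obstacle is not the balancing identity but the choice of the witness $\vy$: what we need is a bound on the \emph{ratio} $\max_i y_i / y_{i^*}$, not merely on $\norm{\vy}$, and a priori the nullspace of $\M_\fl^T$ could contain arbitrarily skewed positive vectors. Augmenting the linear system from Lemma~\ref{rate:sep:lem} with the single extra equation $y_{i^*} = 1$ is the trick that lets Corollary~\ref{rate:ineq:cor} apply directly while preserving $\set{-1,0,+1}$-valued entries, so no new number-theoretic machinery is required beyond what the paper has already developed.
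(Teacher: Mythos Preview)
Your proposal is correct and follows essentially the same route as the paper: both pin $y_{i^*}=1$, invoke Corollary~\ref{rate:ineq:cor} on the resulting $\{-1,0,+1\}$-system to get a positive $\vy$ of norm at most $1+|\fl|\cdot|\fl|!$, and then bound the margin by $\ln m\cdot\|\vy\|_1\le\ln m\cdot\sqrt{|\fl|}\,\|\vy\|$. The only cosmetic difference is that the paper eliminates the pinned coordinate and works with the reduced system $\A\vy=-\vb$ over $\R^{|\fl|-1}$ (where $\A^T$ is $\M_\fl$ with row $i^*$ deleted and $\vb$ is that deleted row), whereas you keep all of $\vy$ and augment $\M_\fl^T\vy=\vzero$ with the extra equation $\ve_{i^*}^T\vy=1$; the two formulations are equivalent.
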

\begin{proof}
  Pick any example $i\in \fl$ and any combination $\vlam$ whose loss on
  $\fl$, $\sum_{i\in \fl}e^{-(\M\vlam)_i}$, is at most $m$. Let $\vb$ be
  the $i\th$ row of $\M$, and let $\A^T$ be the matrix $\M_\fl$ without the
  $i$th row. Then Lemma~\ref{rate:sep:lem} says that $\A\vy = -\vb$ for
  some positive vector $\vy > \vzero$. This implies the margin of
  $\vlam$ on example $i$ is $(\M\vlam)_i = -\vy^T\A^T\vlam$. Since the
  loss of $\vlam$ on $\fl$ is at most $m$, each margin on $\fl$ is at
  least $-\ln m$, and therefore $\max_{i\in \fl}\enc{-\A^T\vlam}_i \leq
  \ln m$. Hence, the margin on example $i$ can be bounded as
  $(\M\vlam)_i = \dotp{\vy^T}{-\A^T\vlam} \leq \ln m
  \norm{\vy}_1$. Using Corollary~\ref{rate:ineq:cor}, we can find
  $\vy$ with bounded norm,  $\norm{\vy}_1 \leq \sqrt{|\fl|}\norm{\vy}
  \leq \sqrt{|\fl|}(1 + k\cdot k!)$ , where $k = {\rm rank}(\A) \leq
  {\rm rank}(\M_\fl) \leq |\fl|$. The proof follows.
\end{proof}

}

\bibliography{newbib}

\end{document}